\newcommand{\1}{\mathds{1}}
\newcommand{\0}{\mathds{O}}
\newcommand{\Q}{\mathbb{Q}}
\newcommand{\R}{\mathbb{R}}
\newcommand{\N}{\mathbb{N}}
\newcommand{\8}{\infty}
\newcommand{\supp}{\mathrm{supp}}
\newcommand{\Co}{\mathcal{C}}
\newcommand{\Int}{\mathrm{int}}
\newcounter{dummy} \numberwithin{dummy}{section}
\newtheorem{theorem}[dummy]{Theorem}
\newtheorem{lemma}[dummy]{Lemma}
\newtheorem{proposition}[dummy]{Proposition}
\newtheorem{corollary}[dummy]{Corollary}
\newtheorem{question}[dummy]{Question}
\theoremstyle{remark}
\newtheorem{remark}[dummy]{Remark}
\newtheorem{example}[dummy]{Example}
\begin{document}

\title{Locally solid convergences and order continuity of positive operators}
\author{Eugene Bilokopytov\footnote{University of Alberta; email address bilokopy@ualberta.ca, erz888@gmail.com.}}
\maketitle

\begin{abstract}
We consider vector lattices endowed with locally solid convergence structures, which are not necessarily topological. We show that such a convergence is defined by the convergence to $0$ on the positive cone. Some results on unbounded modification which were only available in partial cases are generalized. Order convergence is characterized as the strongest locally solid convergence in which monotone nets converge to their extremums (if they exist). We partially characterize sublattices on which the order convergence is the restriction of the order convergence on the ambient lattice. We prove that a homomorphism is order continuous iff it is uo-continuous. Uo convergence is characterized independently of order convergence. We show that on the space of continuous function uo convergence is weaker than the compact open convergence iff the underlying topological space contains a dense locally compact subspace. For a large class of convergences we prove that a positive operator is order continuous if and only if its restriction to a dense regular sublattice is order continuous, and that the closure of a regular sublattice is regular with the original sublattice being order dense in the closure. We also present an example of a regular sublattice of a locally solid topological vector lattice whose closure is not regular.\\

\emph{Keywords:} Vector lattices, Regular sublattices, Order continuity;

MSC2020 46A19, 46A40, 47B60.
\end{abstract}

\section{Introduction}

In this article we study various issues related to order continuity. We approach the problem from the point of view of the theory of (net) convergence, which is a generalization of topology that has proved to be appropriate for studying vector lattices. Indeed, there are several convergences that are induced by the order and algebraic structures of a vector lattice, such as the order convergence and the uniform convergence, which fail to be topological in most cases (see \cite{dem}). Moreover, these convergences are defined in terms of sequences or nets, and so the well developed language of the convergence of filters (see \cite{bb}) is not very convenient for the applications. As a consequence, very recently there has been an effort to develop a general approach to net convergence in the context of vector lattices (see \cite{aeg}, \cite{dow}, \cite{vw}), and this work aims to be a contribution to the project. Let us now describe the content of the article.\medskip

In Section \ref{lscs} we study locally solid convergences in the most general form. We work in the category of net convergence spaces, which was shown in \cite{dow} to be equivalent to the category of filter convergence spaces. The latter is significantly broader than the category of topological spaces, but still rich enough and well studied (see \cite{bb}).

We show (Theorem \ref{locs}) that in order to define a locally solid additive or linear convergence on a vector lattice, it is enough to specify which nets of positive elements converge to the origin, as long as this restricted convergence satisfies only three or four natural conditions. This result is crafted towards the way convergences on vector lattices are defined in practice, and should reduce the effort of the verification if a given convergence is ``admissible''.

In Section \ref{ums} we study the unbounded modifications of locally solid convergences. In particular we prove the general versions of the results, which were previously available either for the unbounded modifications of locally solid topologies or for the unbounded order convergence.\medskip

Starting with Section \ref{ocvls} the order convergence becomes central to our investigation. First, we formally establish (Theorem \ref{ordex}) the intuitive property of this convergence as the one which is determined by the monotone nets converging to their extremums. In Section \ref{ochs} we show (Theorem \ref{huo}) that a homomorphism is order continuous iff it is uo-continuous. We also investigate (Proposition \ref{oh2}) when the ``intrinsic'' order convergence on a sublattice is the restriction of the order convergence of the ambient vector lattice. In Section \ref{cuos} we resolve a somewhat paradoxical situation in which uo convergence often has better properties than the order convergence, but occupies a ``derivative'' role with respect to it. To remedy this we present (Theorem \ref{uo}) an alternative definition of uo convergence, which better captures the analogy to the almost everywhere convergence (note that the uo convergence for sequences in $L_{p}$ is the convergence almost everywhere, see \cite[Proposition 3.1]{gtx}).

Here we would like to remark that the content of sections \ref{ocvls}, \ref{ochs} and \ref{cuos} has a large intersection with the works \cite{pap1} and \cite{pap2} of Fredos Papangelou. In fact, we present some of his results, but with optimized proofs and modernized notations. Namely, Proposition \ref{ocs} shows that the notion of order convergence used in the theory of vector lattices is consistent with the notion of order convergence on partially ordered sets, Theorem \ref{gt1} characterizes regularity of a sublattice in terms of order and uo properties of the inclusion map, and Theorem \ref{uo2} gives yet another characterization of uo convergence. Note that Theorem \ref{gt1} was also rediscovered in \cite{as} and \cite{gtx}, but we present a modification of Papangelou's proof which is shorter and more direct.\medskip

In Section \ref{uocsfs} we use the new characterization of uo convergence to provide a quick proof of the criterion (Theorem \ref{couo1}) of uo convergence on the space of continuous functions recently obtained in \cite{et}. We also show (Theorem \ref{couo2}) that the uo convergence is weaker than the compact open convergence iff the underlying topological space contains a dense locally compact subspace.\medskip

In Section \ref{ucpos} we start with the fact (Proposition \ref{loc0}) that the set of ``local'' order continuity of a continuous positive operator is a closed ideal  (with respect to the given locally solid convergence). From there we deduce (Proposition \ref{oc}) that order continuity of a positive operator can be propagated from an order dense sublattice which generates a dense ideal. This result is utilized in Section \ref{crss} in order to show that for a large class of convergences the closure of a regular sublattice is regular, with the original sublattice being order dense in the closure (Theorem \ref{rod} and Corollary \ref{reg2}). Furthermore, order continuity of a positive continuous operator can be propagated from a dense regular sublattice (Corollary \ref{doc}). We also present an example of a regular sublattice of a locally solid topological vector lattice whose closure is not regular (Example \ref{nonr}).\medskip

In Section \ref{sigs} we focus on the uniform convergence. We show that order continuity of a positive operator on a vector lattice can be propagated from a uniformly dense regular sublattice (Corollary \ref{uoc}; note that no additional ``external'' continuity of the operator is assumed). We also consider $\sigma$-order continuity (Theorem \ref{main1}): if a homomorphism is $\sigma$-order continuous on a sublattice, it is $\sigma$-order continuous on its closure, provided that either the domain or the co-domain has the $\sigma$-property (which every Banach lattice has). Note that we do not assume regularity of the sublattice in the last result. A discussion of some other facts available for $\sigma$-order convergence and continuity constitutes the final Section \ref{ssig}.\bigskip

\textbf{Convergence structures. }Recall that a \emph{net} in a set $X$ is a map from a pre-ordered (endowed with a reflexive transitive relation $\le$) directed (any two elements have an upper bound) set into $X$. \emph{A (net) convergence structure} is an ``adjudicator'' of whether a given net on $X$ converges to a given element of $X$, that satisfies the following axioms:

\begin{itemize}
\item Any constant net converges to its value.
\item If $\left(y_{\beta}\right)_{\beta\in B}$ is a quasi-subnet of $\left(x_{\alpha}\right)_{\alpha\in A}$ and $x_{\alpha}\to x$, then $y_{\beta}\to x$.
\item If $x_{\alpha}\to x$, $y_{\alpha}\to x$, and $\left(z_{\alpha}\right)_{\alpha\in A}$ is such that $z_{\alpha}\in\left\{x_{\alpha},y_{\alpha}\right\}$, for every $\alpha$, then $z_{\alpha}\to x$.
\end{itemize}

Recall that a net $\left(y_{\beta}\right)_{\beta\in B}$ is a \emph{quasi-subnet} of a net $\left(x_{\alpha}\right)_{\alpha\in A}$, if for every $\alpha_{0}\in A$ there is $\beta_{0}\in B$ such that $\left\{y_{\beta}\right\}_{\beta\ge \beta_{0}}\subset \left\{x_{\alpha}\right\}_{\alpha\ge \alpha_{0}}$, i.e. every tail of $\left(x_{\alpha}\right)_{\alpha\in A}$ contains a tail of $\left(y_{\beta}\right)_{\beta\in B}$. Note that every net is a quasi-subnet of any of its tails. Every subnet is a quasi-subnet. All constant nets with the same value are quasi-subnets of one another. Some set-theoretical subtleties associated with the definition of a convergence structure are discussed in detail in \cite{dow}.

We will slightly deviate from their terminology, by distinguishing the closure and adherence. The \emph{adherence} $\overline{A}^{1}$ of $A\subset X$ is the set of limits of all convergent nets in $A$. If $\overline{A}^{1}=A$, we say that $A$ is \emph{closed}, and the \emph{closure} $\overline{A}$ of $A$ is the intersection of all closed sets that contain $A$ (which is itself closed). Clearly, $A\subset\overline{A}^{1}\subset\overline{A}$. Axioms of a net convergence space imply that $\overline{A\cup B}^{1}=\overline{A}^{1}\cup \overline{B}^{1}$ and $\overline{A\cup B}=\overline{A}\cup \overline{B}$, for any $B\subset X$. It follows that the collection of all closed sets is a (closed) topology. We will say that $A$ is (topologically) dense if $\overline{A}^{1}=X$ ($\overline{A}=X$).

Since the adherence is not idempotent, we will often consider the iterated adherence defined for an ordinal $n$ as $\overline{A}^{n}:=\overline{\overline{A}^{n-1}}^{1}$, if $n$ has a predecessor, and $\overline{A}^{n}:=\bigcup\limits_{m<n}\overline{A}^{m}$, if $n$ is a limit ordinal. Using cardinality argument it is not hard to show that $\overline{A}^{n}=\overline{A}$, for a large enough $n$.\medskip

Let us conclude this section with some more natural definitions. First, we call a convergence space $X$ \emph{Hausdorff} if every net in $X$ has at most one limit. A map $\varphi:X\to Y$ into a convergence space $Y$ is \emph{continuous} if  $\varphi\left(x_{\alpha}\right)\to \varphi\left(x\right)$, whenever $x_{\alpha}\to x$. It is easy to see that $\varphi\left(\overline{A}^{1}\right)\subset \overline{\varphi\left(A\right)}^{1}$, for every $A\subset X$, and that a pre-image of a closed set with respect to a continuous map is closed, and so $\varphi$ is continuous with respect to the topologies generated by the convergences on $X$ and $Y$. We will also call $\varphi$ an \emph{embedding} if it is an injection and  $\varphi\left(x_{\alpha}\right)\to \varphi\left(x\right)$ $\Leftrightarrow$ $x_{\alpha}\to x$.

We will say that a convergence structure $\eta$ on $X$ is \emph{stronger} than the convergence structure $\theta$ if the identity map is continuous from $\left(X,\eta\right)$ into $\left(X,\theta\right)$. In this case $\overline{A}^{1}_{\eta}\subset\overline{A}^{1}_{\theta}$, for every $A\subset X$. A product convergence on $X\times Y$ is defined by declaring that $\left(x_{\alpha},y_{\alpha}\right)\to \left(x,y\right)$ if $x_{\alpha}\to x$ and $y_{\alpha}\to y$. Note that if $x_{\alpha}\to x$ and $y_{\beta}\to y$, then the double indexed net $\left(x_{\alpha},y_{\beta}\right)$ also converges to $\left(x,y\right)$ (see \cite[Remark 2.5]{dow}). Lastly, if a convergence is induced by a topology, we will not draw a distinction between the topology and the convergence.

\section{Locally solid convergence}\label{lscs}

In this section we consider the convergence structures on vector lattices which are compatible with the order and algebraic operations. Let $F$ be a vector lattice, whose positive cone is denoted by $F_{+}$. For any set $G\subset F$ we will denote $G_{+}=G\cap F_{+}$.

A linear subspace $E$ of $F$ is called a \emph{sublattice} if it is stable with respect to the lattice operations (or equivalently contains $\left|e\right|$, for each $e\in E$). Also, recall that $G\subset F$ is called \emph{solid} if for every $f\in F$ and $g\in G$ such that $\left|f\right|\le\left|g\right|$ it follows that $f\in G$. An \emph{ideal} is a solid subspace, or equivalently a sublattice for which $0\le f\le e\in E$ implies $f\in E$. It is not hard to show that a solid $E
\subset F$ is an ideal iff $E_{+}+E_{+}\subset E$.

Note that the intersection of sublattices / solids / ideals is a sublattice / solid / ideal. Accordingly, we will denote the ideal generated by $G\subset F$ by $I\left(G\right)$ (we will not use sublattices or solids generated by a set). A \emph{principal ideal} is an ideal of the form $F_{e}=I\left(\left\{e\right\}\right)=\bigcup\limits_{r\ge 0}\left[-re,re\right]$ -- the minimal ideal that contains $e\in F_{+}$.\medskip

A convergence structure on a vector lattice $F$ is called \emph{locally solid linear}, if the addition and scalar multiplication are continuous maps from $F\times F$ and $F\times \R$, respectively, into $F$, and whenever $\left(e_{\alpha}\right)_{\alpha\in A}$ and $\left(f_{\alpha}\right)_{\alpha\in A}$ are two nets in $F$ such that $\left|e_{\alpha}\right|\le \left|f_{\alpha}\right|$, for every $\alpha$, and $f_{\alpha}\to 0_{F}$, then $e_{\alpha}\to 0_{F}$. It is not hard to show that the lattice operations are continuous on $F$. Some general properties of locally solid convergences were investigated in \cite{aeg} and \cite{vw}. In particular, it was proven in \cite[Proposition 2.7]{vw} that the adherence of a sublattice / solid / ideal is a set of the same type. Using induction, this result can be extended to the closure. Also, the author has shown in \cite[Proposition 2.8]{vw} that if $F$ is Hausdorff, then it is \emph{Archimedean} (i.e. such that $\bigwedge\frac{1}{n}f=0_{F}$, for every $f\in F_{+}$) and order intervals are closed. One can also show that if in this case $G\subset F$ is a solid or a sublattice, then $\overline{G_{+}}^{1}=\overline{G}^{1}_{+}$ and $\overline{G_{+}}=\overline{G}_{+}$.\medskip

If the continuity of the scalar multiplication is replaced with the continuity of the additive inversion $f\mapsto -f$, we will call such a convergence locally solid \emph{additive}. For such convergences the previous comment about the adherences and the closures of solids and ideals (but not sublattices) is still valid. It turns out that relatively limited amount of data completely determines a locally solid additive convergence.

\begin{theorem}\label{locs}
Assume that $\to 0_{F}$ is a non-trivial adjudicator on $F_{+}$ to $0_{F}$ which satisfies the following properties:
\begin{itemize}
\item A quasi-subnet of a net convergent to $0_{F}$ converges to $0_{F}$;
\item If $\left(f_{\alpha}\right)_{\alpha\in A}\subset F_{+}$ is convergent to $0_{F}$, then $g_{\alpha}\to 0_{F}$, where $0_{F}\le g_{\alpha}\le f_{\alpha}$, for every $\alpha\in A$;
\item If $f_{\alpha}\to 0_{F}$ and $g_{\alpha}\to 0_{F}$, then $f_{\alpha}+g_{\alpha}\to 0_{F}$.
\end{itemize}
Then, the convergence defined by $f_{\alpha}\to f$ if $\left|f_{\alpha}-f\right|\to 0_{F}$ is a locally solid additive convergence. It is linear iff $\frac{1}{n}f\to 0_{F}$, for every $f\in F_{+}$.
\end{theorem}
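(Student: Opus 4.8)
The plan is to verify, in turn, (i) that the extended relation $f_{\alpha}\to f\Leftrightarrow\left|f_{\alpha}-f\right|\to 0_{F}$ satisfies the three axioms of a net convergence structure, (ii) that it is locally solid additive, and (iii) the scalar-multiplication criterion. Throughout I will use that a tail of a net is a quasi-subnet of the net while the net is a quasi-subnet of each of its tails, so that by the first listed property convergence to $0_{F}$ may be tested on any tail.

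First the convergence axioms. Non-triviality yields at least one net $\left(f_{\alpha}\right)\subset F_{+}$ with $f_{\alpha}\to 0_{F}$; the sandwiching property applied with $0_{F}\le g_{\alpha}=0_{F}\le f_{\alpha}$ then shows the constant net $0_{F}$ converges to $0_{F}$, and since all constant nets with value $0_{F}$ are mutual quasi-subnets, the constant-net axiom follows for the extended convergence (a constant net at $f$ produces the constant net $\left|f-f\right|=0_{F}$). The quasi-subnet axiom is immediate once I note that if $\left(y_{\beta}\right)$ is a quasi-subnet of $\left(x_{\alpha}\right)$ then $\left(\left|y_{\beta}-x\right|\right)$ is a quasi-subnet of $\left(\left|x_{\alpha}-x\right|\right)$, and I invoke the first listed property. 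For the selection axiom, given $\left|x_{\alpha}-x\right|\to 0_{F}$, $\left|y_{\alpha}-x\right|\to 0_{F}$ and $z_{\alpha}\in\left\{x_{\alpha},y_{\alpha}\right\}$, I bound $0_{F}\le\left|z_{\alpha}-x\right|\le\left|x_{\alpha}-x\right|+\left|y_{\alpha}-x\right|$, use the additivity property for the right-hand side, and conclude by sandwiching.

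Next, local solidity and additivity. Solidity is a direct application of sandwiching to $0_{F}\le\left|e_{\alpha}\right|\le\left|f_{\alpha}\right|$, while continuity of $f\mapsto -f$ is trivial since $\left|(-f_{\alpha})-(-f)\right|=\left|f_{\alpha}-f\right|$. For continuity of addition I take $f_{\alpha}\to f$ and $g_{\alpha}\to g$ and estimate $\left|(f_{\alpha}+g_{\alpha})-(f+g)\right|\le\left|f_{\alpha}-f\right|+\left|g_{\alpha}-g\right|$, once more combining additivity and sandwiching. A useful lemma to isolate here is that multiplication of a positive null net by a fixed nonnegative constant preserves convergence to $0_{F}$: for an integer $n$ this is $n$-fold additivity, and for a real $M\ge 0$ one sandwiches $M g_{\alpha}$ between $0_{F}$ and $n g_{\alpha}$ with $n\ge M$.

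Finally the scalar-multiplication criterion. The forward implication is immediate: if the convergence is linear then applying continuity of scalar multiplication to the constant net $f$ and the scalars $\frac1n\to 0$ gives $\frac1n f\to 0_{F}$. For the converse I must show $\left(f,\lambda\right)\mapsto\lambda f$ is continuous, so I take $f_{\alpha}\to f$ and $\lambda_{\alpha}\to\lambda$ in $\R$ and split
\[
\left|\lambda_{\alpha}f_{\alpha}-\lambda f\right|\le\left|\lambda_{\alpha}\right|\left|f_{\alpha}-f\right|+\left|\lambda_{\alpha}-\lambda\right|\left|f\right|.
\]
The first summand tends to $0_{F}$ by the lemma above: on the tail where $\left|\lambda_{\alpha}\right|\le M$ it is sandwiched below $M\left|f_{\alpha}-f\right|\to 0_{F}$. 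The crux is the second summand, where $\mu_{\alpha}:=\left|\lambda_{\alpha}-\lambda\right|\to 0$ in $\R$ while the hypothesis supplies only $\frac1n\left|f\right|\to 0_{F}$. I bridge the gap by choosing, on the tail where $\mu_{\alpha}\le 1$, integers $n_{\alpha}:=\left\lfloor 1/\mu_{\alpha}\right\rfloor$ (treating $\mu_{\alpha}=0$ separately), so that $0_{F}\le\mu_{\alpha}\left|f\right|\le\frac{1}{n_{\alpha}}\left|f\right|$ and $n_{\alpha}\to\infty$; the latter makes $\left(\frac{1}{n_{\alpha}}\left|f\right|\right)_{\alpha}$ a quasi-subnet of $\left(\frac1n\left|f\right|\right)_{n}$, whence $\frac{1}{n_{\alpha}}\left|f\right|\to 0_{F}$ and sandwiching gives $\mu_{\alpha}\left|f\right|\to 0_{F}$. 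Recombining the two summands via additivity and a final sandwiching finishes the argument. I expect this quasi-subnet construction, which turns a net of vanishing scalars into a quasi-subnet of the sequence $\frac1n\left|f\right|$, to be the main obstacle, as it is the only place where the structure of quasi-subnets and the specific hypothesis genuinely interact.
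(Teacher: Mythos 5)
Your verification of the convergence axioms, of local solidity and additivity, and of the first summand $\left|\lambda_{\alpha}\right|\left|f_{\alpha}-f\right|$ is correct and essentially identical to the paper's proof (your handling of the selection axiom via the direct bound $\left|z_{\alpha}-x\right|\le\left|x_{\alpha}-x\right|+\left|y_{\alpha}-x\right|$ is, if anything, a bit more streamlined than the paper's, which first establishes continuity of addition and then uses it). The gap sits exactly where you predicted: the second summand. You define $n_{\alpha}=\lfloor 1/\mu_{\alpha}\rfloor$ only when $\mu_{\alpha}=\left|\lambda_{\alpha}-\lambda\right|>0$, and the parenthetical ``treating $\mu_{\alpha}=0$ separately'' cannot be left unexpanded. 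If $S_{0}=\left\{\alpha:\lambda_{\alpha}=\lambda\right\}$ is cofinal but contains no tail (e.g.\ over $A=\N$, take $\lambda_{\alpha}=\lambda$ for even $\alpha$ and $\lambda_{\alpha}=\lambda+1/\alpha$ for odd $\alpha$), then you can neither pass to a tail on which $n_{\alpha}$ is everywhere defined, nor simply discard the indices in $S_{0}$: in an abstract net convergence structure, convergence of a net does not follow from convergence of its restrictions to two complementary cofinal subsets unless one reassembles them explicitly through the selection axiom, and your argument never invokes that axiom here. Worse, the naive completion of your construction (setting the value to $0_{F}$ when $\mu_{\alpha}=0$) genuinely fails: for $f\ne 0_{F}$ no tail of the sequence $\left(\frac{1}{n}\left|f\right|\right)_{n}$ contains $0_{F}$, so a net taking the value $0_{F}$ cofinally is \emph{not} a quasi-subnet of $\left(\frac{1}{n}\left|f\right|\right)_{n}$, and the quasi-subnet claim on which your argument rests breaks down.

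There are two ways to close this. The paper's device is to interleave zeros into the reference sequence: it first shows (by the selection axiom, applied to the sequences $\left(\frac{1}{\lceil n/2\rceil}\left|f\right|\right)_{n}$ and $\left(0_{F}\right)_{n}$) that the sequence $\left|f\right|,0_{F},\frac{1}{2}\left|f\right|,0_{F},\dots$ converges to $0_{F}$; since $0_{F}$ now lies in every tail of this sequence, the net $\left(\mu_{\alpha}\left|f\right|\right)_{\alpha}$, with $\mu_{\alpha}=1/\lfloor\left|\lambda_{\alpha}-\lambda\right|^{-1}\rfloor$ for $\lambda_{\alpha}\ne\lambda$ and $\mu_{\alpha}=0$ otherwise, is a quasi-subnet of it no matter where the zeros fall, and the sandwich $\left|\lambda_{\alpha}-\lambda\right|\left|f\right|\le\mu_{\alpha}\left|f\right|$ finishes. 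Alternatively, you can keep $\left(\frac{1}{n}\left|f\right|\right)_{n}$ as the reference and repair your construction by a case analysis: if $S_{+}=\left\{\alpha:\lambda_{\alpha}\ne\lambda\right\}$ is not cofinal, then $\mu_{\alpha}\left|f\right|=0_{F}$ on a tail and there is nothing to prove; if $S_{+}$ is cofinal, choose for each $\alpha\in S_{0}$ an index $\sigma\left(\alpha\right)\in S_{+}$ with $\sigma\left(\alpha\right)\ge\alpha$ and set $n_{\alpha}=n_{\sigma\left(\alpha\right)}$ --- then $n_{\alpha}\to\infty$ over the whole index set, $\left(\frac{1}{n_{\alpha}}\left|f\right|\right)_{\alpha}$ is a legitimate quasi-subnet of $\left(\frac{1}{n}\left|f\right|\right)_{n}$, and the sandwich $0_{F}\le\mu_{\alpha}\left|f\right|\le\frac{1}{n_{\alpha}}\left|f\right|$ still holds (trivially on $S_{0}$). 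Either way, the missing step is a genuine argument rather than bookkeeping; it is precisely the point at which the paper's proof takes its one non-obvious turn.
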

\begin{proof}
First, since $\to 0_{F}$ is non-trivial, there is a net $\left(f_{\alpha}\right)_{\alpha\in A}\subset F_{+}$ which converges to $0_{F}$. Then, the net $\left(g_{\alpha}\right)_{\alpha\in A}$ defined by $g_{\alpha}=0_{F}$, for every $\alpha$, satisfies $0_{F}\le g_{\alpha}\le f_{\alpha}$, hence $g_{\alpha}\to 0_{F}$. Since every zero-net is a quasi-subnet of $\left(g_{\alpha}\right)_{\alpha\in A}$, it follows that every constant net converges to its value.\medskip

Assume that $f_{\alpha}\to f$ and $g_{\alpha}\to g$, where $\left(f_{\alpha}\right)_{\alpha\in A},\left(g_{\alpha}\right)_{\alpha\in A}\subset F$. Then\linebreak $\left|\left(f_{\alpha}+g_{\alpha}\right)-\left(f+g\right)\right|\le \left|f_{\alpha}-f\right|+\left|g_{\alpha}-g\right|\to 0_{F}$, and so $f_{\alpha}+g_{\alpha}\to f+g$. Hence, the addition is continuous. Continuity of the additive inversion is easy to see.\medskip

Assume that $f_{\alpha}\to f\leftarrow g_{\alpha}$ and $\left(e_{\alpha}\right)_{\alpha\in A}\subset F$ is such that $e_{\alpha}\in\left\{f_{\alpha},g_{\alpha}\right\}$, for every $\alpha\in A$. Since the continuity of the operations was already established, $f_{\alpha}-g_{\alpha}\to 0_{F}$, and for every $\alpha\in A$ we have $f_{\alpha}-e_{\alpha}\in\left\{f_{\alpha}-g_{\alpha},0_{F}\right\}$. Hence, $\left|f_{\alpha}-e_{\alpha}\right|\le \left|f_{\alpha}-g_{\alpha}\right|\to 0_{F}$, from where $f_{\alpha}-e_{\alpha}\to 0_{F}$, and so $e_{\alpha}=f_{\alpha}-\left(f_{\alpha}-e_{\alpha}\right)\to f-0_{F}=f$. Thus, $\to$ is a convergence structure on $F$, which is clearly locally solid additive.\medskip

Now let us show that if $\frac{1}{n}f\to 0_{F}$, for every $f\in F_{+}$, the convergence is linear. First, it is easy to show using the given axioms that the sequence $f,0_{F},\frac{1}{2}f,0_{F},...$ converges to $0_{F}$.

Let $\left(f_{\alpha}\right)_{\alpha\in A}\subset F$ and $\left(\lambda_{\alpha}\right)_{\alpha\in A}\subset \R$ be such that $f_{\alpha}\to f$ and $\lambda_{\alpha}\to \lambda$. Since $\lambda_{\alpha}\to \lambda$, by passing to a tail we may assume that $\left|\lambda_{\alpha}-\lambda\right|\le 1$, for every $\alpha$. Define $\left(\mu_{\alpha}\right)_{\alpha\in A}\subset \left[0,1\right]$ by $\mu_{\alpha}^{-1}=\lfloor\left|\lambda_{\alpha}-\lambda\right|^{-1}\rfloor$, i.e. $\frac{1}{\mu_{\alpha}}$ is the integer part of $\left|\lambda_{\alpha}-\lambda\right|^{-1}$, for every $\alpha$ (in the event that $\lambda_{\alpha}=\lambda$ define $\mu_{\alpha}=0$). It then follows that $\left(\mu_{\alpha}\left|f\right|\right)_{\alpha\in A}$ is a quasi-subnet of $\left|f\right|,0_{F},\frac{1}{2}\left|f\right|,0_{F},...$, and so $\mu_{\alpha}\left|f\right|\to 0_{F}$. On the other hand $\left|\left(\lambda_{\alpha}-\lambda\right)f\right|\le \mu_{\alpha}\left|f\right|$, from where $\left(\lambda_{\alpha}-\lambda\right)f\to 0_{F}$.

We may assume that $\left(\lambda_{\alpha}\right)_{\alpha\in A}\subset \left[-m,m\right]$, for some $m\in\N$. From local solidness it follows that $\left|\frac{1}{m}\lambda_{\alpha}\left(f_{\alpha}-f\right)\right|\le \left|f_{\alpha}-f\right|\to 0_{F}$, hence $\frac{1}{m}\lambda_{\alpha}\left(f_{\alpha}-f\right)\to 0_{F}$. Applying additivity $m-1$ times yields $\lambda_{\alpha}\left(f_{\alpha}-f\right)\to 0_{F}$. It now follows that $\lambda_{\alpha}f_{\alpha}-\lambda f=\lambda_{\alpha}\left(f_{\alpha}-f\right)+\left(\lambda_{\alpha}-\lambda\right)f\to 0_{F}$.
\end{proof}

Let us show that the continuity of an \emph{operator} (=linear map) is determined by its continuity on $F_{+}$ at $0_{F}$. In particular, for locally solid additive convergences $\eta,\theta$ on $F$ the former is stronger than the latter iff $0_{F}\le f_{\alpha}\xrightarrow[]{\eta} 0_{F}$ implies $f_{\alpha}\xrightarrow[]{\theta} 0_{F}$.

\begin{proposition}\label{contop}
If $T:F\to E$ is an operator between vector lattices which are endowed with locally solid additive convergences, then $T$ is continuous iff it is continuous on $F_{+}$ at $0_{F}$.
\end{proposition}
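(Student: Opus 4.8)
The forward implication is immediate: if $T$ is continuous then so is its restriction to $F_+$, in particular at $0_F$. So the plan is to establish the converse, assuming $T$ is continuous on $F_+$ at $0_F$ (that is, $Tf_\alpha\to 0_E$ whenever $0_F\le f_\alpha\to 0_F$, where $T0_F=0_E$ by linearity) and deducing continuity of $T$ everywhere.

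First I would reduce the problem to continuity at the origin. Suppose $f_\alpha\to f$ in $F$. Continuity of addition and of the additive inverse on $F$ give $f_\alpha-f\to 0_F$. If I can show $T\left(f_\alpha-f\right)\to 0_E$, then, since $Tf_\alpha=T\left(f_\alpha-f\right)+Tf$ and the constant net $Tf$ converges to $Tf$, continuity of addition on $E$ yields $Tf_\alpha\to Tf$. Thus it suffices to prove that $f_\alpha\to 0_F$ implies $Tf_\alpha\to 0_E$ for arbitrary, not necessarily positive, nets.

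The key step is to split an arbitrary null net into its positive and negative parts. Given $f_\alpha\to 0_F$, I note that $\left|f_\alpha^+\right|=f_\alpha^+\le\left|f_\alpha\right|$ and $\left|f_\alpha^-\right|=f_\alpha^-\le\left|f_\alpha\right|$, so local solidness, applied with $\left(f_\alpha\right)_{\alpha\in A}$ itself as the dominating net, gives $f_\alpha^+\to 0_F$ and $f_\alpha^-\to 0_F$. These are nets in $F_+$ converging to $0_F$, so the hypothesis yields $Tf_\alpha^+\to 0_E$ and $Tf_\alpha^-\to 0_E$. Writing $Tf_\alpha=Tf_\alpha^+-Tf_\alpha^-$ and using continuity of the additive inverse followed by continuity of addition on $E$, I conclude $Tf_\alpha\to 0_E$, as required.

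There is no serious obstacle here; the only point requiring care is to invoke exclusively the additive, rather than linear, structure, so that the argument applies to locally solid additive convergences. In particular, the reduction to the origin uses only continuity of addition and of $f\mapsto -f$, while the decomposition step relies only on local solidness, never on continuity of scalar multiplication.
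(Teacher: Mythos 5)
Your proposal is correct and follows essentially the same route as the paper: the paper's proof is exactly the decomposition $0_{F}\le f_{\alpha}^{\pm}\le\left|f_{\alpha}\right|\to 0_{F}$ via local solidness, then $Tf_{\alpha}=Tf_{\alpha}^{+}-Tf_{\alpha}^{-}\to 0_{E}$ by additivity. The only difference is that you spell out the translation step (reducing $f_{\alpha}\to f$ to $f_{\alpha}-f\to 0_{F}$ using continuity of addition and negation), which the paper leaves implicit.
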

\begin{proof}
If $f_{\alpha}\to 0_{F}$, then $0_{F}\le f_{\alpha}^{\pm}\to 0_{F}$, from where $Tf_{\alpha}^{\pm}\to 0_{E}$, and so $Tf_{\alpha}=Tf_{\alpha}^{+}-Tf_{\alpha}^{-}\to 0_{E}$.
\end{proof}

Consider an example of a locally solid linear convergence.

\begin{example}\label{ru}
For $e\in F_{+}$ the principal ideal $F_{e}$ is endowed with the norm $\|\cdot\|_{e}$ defined by $\|f\|_{e}=\bigwedge\left\{\alpha\ge 0:~ \left|f\right|\le\alpha e\right\}$. A net $\left(f_{\alpha}\right)_{\alpha\in A}\subset F$ converges \emph{uniformly} to $f$ \emph{relative} to $e\in F$ if $f\in F_{e}$ and for every $\varepsilon>0$ there is $\alpha_{0}$ such that $\|f-f_{\alpha}\|_{e}\le\varepsilon$ (and in particular $f_{\alpha}\in F_{e}$), for every $\alpha\ge \alpha_{0}$. We denote it $f_{\alpha}\xrightarrow[]{\|\cdot\|_{e}}f$ and say that $\left(f_{\alpha}\right)_{\alpha\in A}\subset F$ converges \emph{uniformly} to $f$, if $f_{\alpha}\xrightarrow[]{\|\cdot\|_{e}}f$, for some $e$. It is easy to see that the uniform convergence is locally solid linear. Moreover, it is Hausdorff iff $F$ is Archimedean. Recall that $F$ has $\sigma$\emph{-property} if every countable set is contained in a principal ideal. This property is equivalent to the fact that the adherence of every set is closed, and so is equal to the closure (see \cite[Theorem 72.3]{zl}).
\qed\end{example}

We will say that a locally solid additive convergence $\eta$ on $F$ is \emph{idempotent} if for any nets $\left(f_{\alpha}\right)_{\alpha\in A},\left(g_{\beta}\right)_{\beta\in B}\subset F_{+}$ such that $g_{\beta}\xrightarrow[\beta]{} 0_{F}$ and $\left(f_{\alpha}-g_{\beta}\right)^{+}\xrightarrow[\alpha]{} 0_{F}$, for every $\beta$, it follows that $f_{\alpha}\xrightarrow[\alpha]{} 0_{F}$. Note that the corresponding notion of the product of locally solid additive convergences will be explored in the future works. Idempotent convergences come handy in the following auxiliary result.

\begin{proposition}\label{umb}
Let $\eta$ be a locally solid additive convergence on $F$. If $\left(f_{\alpha}\right)_{\alpha\in A}$ is a net in $F_{+}$, then $H=\left\{h\in F:~ \left|h\right|\wedge f_{\alpha}\to 0_{F}\right\}$ is an ideal. If $\eta$ is idempotent, then $H$ is closed.
\end{proposition}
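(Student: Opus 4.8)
The plan is to verify the two assertions separately, relying throughout on the three axioms satisfied by $\to 0_{F}$ (stability under quasi-subnets, domination, and additivity), together with the elementary Riesz space inequalities $\left|g\right|\wedge f_{\alpha}\le \left|h\right|\wedge f_{\alpha}$ whenever $\left|g\right|\le\left|h\right|$, and $\left(g+h\right)\wedge f_{\alpha}\le g\wedge f_{\alpha}+h\wedge f_{\alpha}$ for $g,h\in F_{+}$. These are exactly the ingredients needed to feed the axioms.

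First I would show that $H$ is an ideal via the criterion noted earlier that a solid subspace $E$ with $E_{+}+E_{+}\subset E$ is an ideal. For solidness, if $\left|g\right|\le\left|h\right|$ with $h\in H$, then $0_{F}\le \left|g\right|\wedge f_{\alpha}\le \left|h\right|\wedge f_{\alpha}\to 0_{F}$, so the domination axiom gives $\left|g\right|\wedge f_{\alpha}\to 0_{F}$ and hence $g\in H$. For $H_{+}+H_{+}\subset H$, take $g,h\in H_{+}$; the subdistributive inequality together with the additivity axiom yields $0_{F}\le \left(g+h\right)\wedge f_{\alpha}\le g\wedge f_{\alpha}+h\wedge f_{\alpha}\to 0_{F}$, and one more application of domination gives $\left(g+h\right)\wedge f_{\alpha}\to 0_{F}$, i.e. $g+h\in H$. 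This settles the first claim.

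For closedness under idempotency, I would take a net $\left(h_{\gamma}\right)$ in $H$ with $h_{\gamma}\xrightarrow[]{\eta}h$ and must produce $h\in H$, i.e. $\left|h\right|\wedge f_{\alpha}\to 0_{F}$. Writing $p=\left|h\right|$ and $g_{\gamma}=\left|h_{\gamma}\right|$, continuity of the lattice operations gives $g_{\gamma}\xrightarrow[]{\eta} p$, while solidness of $H$ gives $g_{\gamma}\in H_{+}$. Set $v_{\gamma}=\big|p-g_{\gamma}\big|$, so that $v_{\gamma}\xrightarrow[]{\eta} 0_{F}$ and $p-v_{\gamma}\le g_{\gamma}$. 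The key estimate is that, for each fixed $\gamma$, the element $p\wedge f_{\alpha}-v_{\gamma}$ lies below both $f_{\alpha}$ (since $v_{\gamma}\ge 0_{F}$) and $g_{\gamma}$ (since $p\wedge f_{\alpha}\le p\le g_{\gamma}+v_{\gamma}$), hence below the positive element $g_{\gamma}\wedge f_{\alpha}$; consequently $\left(p\wedge f_{\alpha}-v_{\gamma}\right)^{+}\le g_{\gamma}\wedge f_{\alpha}$.

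Because $g_{\gamma}\in H_{+}$, we have $g_{\gamma}\wedge f_{\alpha}\xrightarrow[\alpha]{}0_{F}$ for each $\gamma$, so the domination axiom gives $\left(p\wedge f_{\alpha}-v_{\gamma}\right)^{+}\xrightarrow[\alpha]{}0_{F}$ for every $\gamma$. Now idempotency applies verbatim to the nets $\left(p\wedge f_{\alpha}\right)_{\alpha}\subset F_{+}$ and $\left(v_{\gamma}\right)\subset F_{+}$: since $v_{\gamma}\to 0_{F}$ and $\left(p\wedge f_{\alpha}-v_{\gamma}\right)^{+}\to 0_{F}$ in $\alpha$ for each $\gamma$, we conclude $p\wedge f_{\alpha}\to 0_{F}$, that is $h\in H$. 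Thus $\overline{H}^{1}\subset H$ and $H$ is closed. I expect the main obstacle to be locating the correct approximating net — identifying $v_{\gamma}=\big|\left|h\right|-\left|h_{\gamma}\right|\big|$ and verifying the estimate $\left(p\wedge f_{\alpha}-v_{\gamma}\right)^{+}\le g_{\gamma}\wedge f_{\alpha}$ — which is precisely the data that the definition of idempotence is designed to consume.
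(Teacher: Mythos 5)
Your proof is correct and takes essentially the same approach as the paper: the ideal part uses the identical inequalities, and the closedness part builds a positive error net tending to $0_{F}$, establishes an estimate of the form $\left(\left|h\right|\wedge f_{\alpha}-\mathrm{error}\right)^{+}\le\left(\mathrm{element~of~}H_{+}\right)\wedge f_{\alpha}\xrightarrow[\alpha]{}0_{F}$, and feeds it to idempotency. The only (cosmetic) difference is that the paper truncates the approximating net into $\left[0_{F},h\right]$ so that the error is $h-h_{\beta}\ge 0_{F}$, whereas you work with $g_{\gamma}=\left|h_{\gamma}\right|$ and the error $v_{\gamma}=\bigl|\left|h\right|-\left|h_{\gamma}\right|\bigr|$, which handles a general (not necessarily positive) limit $h$ directly.
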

\begin{proof}
If $h\in H$ and $\left|g\right|\le \left|h\right|$, we have $\left|g\right|\wedge f_{\alpha}\le \left|h\right|\wedge f_{\alpha}\to 0_{F}$, from where $\left|g\right|\wedge f_{\alpha}\to 0_{F}$, and so $g\in H$. If $g,h\in H_{+}$, then $\left(g+h\right)\wedge f_{\alpha}\le g\wedge f_{\alpha}+h\wedge f_{\alpha}\to 0_{F}$, hence $g+h\in H$. Therefore, $H$ is an ideal.\medskip

Now let $h\in \overline{H}^{1}_{+}$, so that there is a net $\left(h_{\beta}\right)_{\beta\in B}\subset H$ which converges to $h$. Replacing $h_{\beta}$ with $h_{\beta}^{+}\wedge h$ if needed we may assume that $\left(h_{\beta}\right)_{\beta\in B}\subset\left[0_{H},h\right]$. Then, $h-h_{\beta}\to 0_{F}$, and for every $\beta$ we have $\left(h\wedge f_{\alpha}-h+h_{\beta}\right)^{+}\le h_{\beta}\wedge f_{\alpha}\xrightarrow[\alpha]{} 0_{F}$. If the convergence is idempotent, this implies that $h\wedge f_{\alpha}\to 0_{F}$, and so $h\in H$. Thus, in this case $H$ is closed.
\end{proof}

Let us now show that idempotent convergences form a wide class (see also Proposition \ref{ocls5}).

\begin{example}\label{topid}
Every locally solid additive topology is idempotent. Assume that $g_{\beta}\xrightarrow[\beta]{} 0_{F}$ and $\left(f_{\alpha}-g_{\beta}\right)^{+}\xrightarrow[\alpha]{} 0_{F}$, for every $\beta$. Take a solid neighborhood $U$ of $0_{F}$, and a neighborhood $V$ of $0_{F}$ such that $V+V\subset U$. There is $\beta\in B$ such that $g_{\beta}\in V$, and there is $\alpha_{0}$ such that $\left(f_{\alpha}-g_{\beta}\right)^{+}\in V$, for every $\alpha\ge\alpha_{0}$. Hence, $f_{\alpha}\le \left(f_{\alpha}-g_{\beta}\right)^{+}+g_{\beta}\in V+V\subset U$, and so $f_{\alpha}\in U$, for every $\alpha\ge\alpha_{0}$. Thus, $f_{\alpha}\to 0_{F}$.
\qed\end{example}

\begin{example}
If $F$ has $\sigma$-property the uniform convergence is idempotent. Assume that $g_{\beta}\xrightarrow[\beta]{} 0_{F}$ and $\left(f_{\alpha}-g_{\beta}\right)^{+}\xrightarrow[\alpha]{} 0_{F}$, for every $\beta$. There is $e_{0}$ such that $\|g_{\beta}\|_{e_{0}}\to 0$. Choose a sequence $\left(\beta_{n}\right)_{n\in\N}\subset B$ such that $\|g_{\beta_{n}}\|_{e_{0}}\to 0$. For every $n\in\N$ there is $e_{n}$ such that $\|\left(f_{\alpha}-g_{\beta_{n}}\right)^{+}\|_{e_{n}}\xrightarrow[\alpha]{}  0$. From $\sigma$-property, there is $e\in F_{+}$ such that $\left\{e_{n}\right\}_{n\in\N\cup\left\{0\right\}}\subset F_{e}$; it follows that $\|\left(f_{\alpha}-g_{\beta_{n}}\right)^{+}\|_{e}\xrightarrow[\alpha]{} 0$ and $\|g_{\beta_{n}}\|_{e}\xrightarrow[\alpha]{n\in\N}  0$. Arguing as in Example \ref{topid} it is easy to show that $\|f_{\alpha}\|_{e}\to 0$, from where $f_{\alpha}\to 0_{F}$.
\qed\end{example}

\section{Unbounded modification}\label{ums}

In this section $F$ is a vector lattice endowed with a locally solid additive convergence $\eta$. We will consider the unbounded modification of $\eta$, generalizing some of the results from e.g. \cite{acw2}, \cite{aeg}, \cite{dem1}, \cite{pap1} and \cite{taylor}. For $A\subset F$ define a convergence $u_{A}\eta$ on $F$ by $0_{F}\le f_{\alpha}\xrightarrow[]{u_{A}\eta} 0_{F}$ if $\left|a\right|\wedge f_{\alpha}\xrightarrow[]{\eta} 0_{F}$, for every $a\in A$. We will also denote $u\eta=u_{F}\eta$. According to Theorem \ref{locs}, this data indeed determines a locally solid additive convergence on $F$, which is weaker than $\eta$. It is also clear that if $A\subset B$, then $u_{A}\eta$ is weaker than $u_{B}\eta$. In the next proposition we gather the basic properties of the unbounded modification.

\begin{proposition}\label{unba}Let $A\subset F$ and let $E,G$ be ideals in $F$. Then:
\item[(i)] $u_{A}\eta=u_{I\left(A\right)}\eta$ is the weakest locally solid additive convergence on $F$, which is stronger than (in fact coincides with) $\eta$ on $\left[0_{F},\left|a\right|\right]$, for every $a\in A$.
\item[(ii)] $0_{F}\le f_{\alpha}\xrightarrow[]{u_{A}\eta} 0_{F}$ iff $g_{\alpha}\xrightarrow[]{\eta} 0_{F}$, for every net $\left(g_{\alpha}\right)_{\alpha\in A}\subset \left[0_{F},\left|a\right|\right]$, $a\in A$, with $0_{F}\le g_{\alpha}\le f_{\alpha}$, for every $\alpha$.
\item[(iii)] $u_{G}u_{E}\eta=u_{E\cap G}\eta$. In particular, $u_{E}u\eta=uu_{E}\eta=u_{E}u_{E}\eta=u_{E}\eta$.
\item[(iv)] $f_{\alpha}\xrightarrow[]{u_{E}\eta} f$, iff $g\vee f_{\alpha}\wedge h\xrightarrow[]{\eta} g\vee f\wedge h$, for every $g,h$ with $h-g\in E_{+}$. In particular, $0_{F}\le f_{\alpha}\xrightarrow[]{u\eta} f\in F_{+}$ iff $h\wedge f_{\alpha}\xrightarrow[]{\eta} h\wedge f$, for every $h\in F_{+}$.
\item[(v)] If $\eta$ is idempotent, then $u_{A}\eta=u_{\overline{I\left(A\right)}}\eta$.
\item[(vi)] If $\eta$ is idempotent and $h\in F_{+}$ is a topological unit (i.e. $F_{h}$ is topologically dense in $F$), then for a net $\left(f_{\alpha}\right)_{\alpha\in A}\subset F_{+}$ we have $f_{\alpha}\xrightarrow[]{u\eta} 0_{F}$ if and only if $h \wedge f_{\alpha}\xrightarrow[]{\eta} 0_{F}$.
\end{proposition}
\begin{proof}
(i): It is clear that $u_{I\left(A\right)}\eta$ is stronger than $u_{A}\eta$. To prove the converse, assume that $0_{F}\le f_{\alpha}\xrightarrow[]{u_{A}\eta} 0_{F}$ and let $H=\left\{h\in F:~ \left|h\right|\wedge f_{\alpha}\to 0_{F}\right\}$. By assumption, $A\subset H$, and according to Proposition \ref{umb}, $H$ is an ideal, therefore it contains $I\left(A\right)$. Hence, $\left|h\right|\wedge f_{\alpha}\to 0_{F}$, for every $h\in I\left(A\right)$, and so $f_{\alpha}\xrightarrow[]{u_{I\left(A\right)}\eta} 0_{F}$.\medskip

Let $a\in A$. If $\left[0_{F},\left|a\right|\right]\ni f_{\alpha}\xrightarrow[]{u_{A}\eta} f\in \left[0_{F},\left|a\right|\right]$, then $\frac{1}{2}f_{\alpha}\xrightarrow[]{u_{A}\eta} \frac{1}{2}f$, therefore $\frac{1}{2}\left|f_{\alpha}-f\right|=\left|a\right|\wedge\left|\frac{1}{2}f_{\alpha}-\frac{1}{2}f\right|\xrightarrow[]{\eta}0_{F}$, from where $f_{\alpha}\xrightarrow[]{\eta}f$. Hence, $u_{A}\eta$ and $\eta$ coincide on $\left[0_{F},\left|a\right|\right]$.\medskip

Assume that $\theta$ is a locally solid additive convergence on $F$, which is stronger than $\eta$ on $\left[0_{F},\left|a\right|\right]$, for every $a\in A$. If $0_{F}\le f_{\alpha}\xrightarrow[]{\theta}0_{F}$, then for every $a\in A$ we have that $\left[0_{F},\left|a\right|\right]\ni \left|a\right|\wedge f_{\alpha}\xrightarrow[]{\theta}0_{F}$, hence $\left|a\right|\wedge f_{\alpha}\xrightarrow[]{\eta}0_{F}$. Since $a$ was arbitrary we conclude that $f_{\alpha}\xrightarrow[]{u_{A}\eta}0_{F}$, and so $\theta$ is stronger than $u_{A}\eta$.\medskip

(iii): Let $\left(f_{\alpha}\right)_{\alpha\in A}\subset F_{+}$. Then $f_{\alpha}\xrightarrow[]{u_{G}u_{E}\eta} 0_{F}$ iff $g\wedge f_{\alpha}\xrightarrow[]{u_{E}\eta} 0_{F}$, for every $g\in G_{+}$ iff $e\wedge g\wedge f_{\alpha}\xrightarrow[]{\eta} 0_{F}$, for every $g\in G_{+}$ and $e\in E_{+}$. Since $\left\{e\wedge g:~ e\in E_{+},~ g\in G_{+}\right\}=E\cap G_{+}$, the last condition is equivalent to $f\wedge f_{\alpha}\xrightarrow[]{\eta} 0_{F}$, for every $f\in E\cap G_{+}$, which in turn means that $f_{\alpha}\xrightarrow[]{u_{E\cap G}\eta} 0_{F}$.\medskip

(iv): Let us prove the first claim. Sufficiency: For every $e\in E_{+}$ it follows that $$e\wedge\left(f_{\alpha}-f\right)^{+}=\left(e+f\right)\wedge f_{\alpha}\vee f-f\xrightarrow[]{\eta}f-f=0_{F},$$ and so $\left(f_{\alpha}-f\right)^{+}\xrightarrow[]{u_{E}\eta}0_{F}$, and analogously $\left(f_{\alpha}-f\right)^{-}\xrightarrow[]{u_{E}\eta}0_{F}$, from where $f_{\alpha}\xrightarrow[]{u_{E}\eta} f$.\medskip

Necessity: Fix $g,h$ with $h-g\in E_{+}$. From continuity of the lattice operations in the locally solid additive convergence $u_{E}\eta$, it follows that $f_{\alpha}\xrightarrow[]{u_{E}\eta} f$ implies $\left(f_{\alpha}-g\right)^{+}\wedge \left(h-g\right)\xrightarrow[]{u_{E}\eta}\left(f-g\right)^{+}\wedge \left(h-g\right)$. Since according to (i) $\eta$ and $u_{E}$ coincide on $\left[0_{F}, h-g\right]$  , it follows that $$g\vee f_{\alpha}\wedge h-g=\left(f_{\alpha}-g\right)^{+}\wedge \left(h-g\right)\xrightarrow[]{\eta}\left(f-g\right)^{+}\wedge \left(h-g\right)=g\vee f\wedge h-g,$$ from where $g\vee f_{\alpha}\wedge h\xrightarrow[]{\eta} g\vee f\wedge h$.\medskip

Now let us deal with the second claim. Necessity is derived from the first claim by taking $g=0_{F}$. For sufficiency, take $g\le h$ and observe that $h\wedge f_{\alpha}=h^{+}\wedge f_{\alpha}-h^{-}\xrightarrow[]{\eta}h^{+}\wedge f-h^{-}=h\wedge f$, from where and continuity of the operations we get
$h\wedge f_{\alpha}\vee g\xrightarrow[]{\eta}h\wedge f\vee g$. Hence, by the first claim $f_{\alpha}\xrightarrow[]{u\eta} f$.\medskip

(ii) is easy to inspect. (v) is proven similarly to the first claim in (i), using the fact that according to Proposition \ref{umb}, $H$ is a closed ideal. (vi) immediately follows from (v).
\end{proof}

In particular, if $\eta$ is Hausdorff, for monotone nets $\eta$ and $u\eta$ convergences coincide, since monotone $u\eta$-convergent nets are eventually order bounded. We will call a locally solid convergence $\eta$ \emph{unbounded} if $\eta=u\eta$, or equivalently if $\eta=u\theta$, for some locally solid convergence $\theta$. Part (iv) of Proposition \ref{unba} shows that unbounded convergences are initial with respect to a certain collection of maps on $F$.

\begin{example}
For a Tychonoff space $X$ the compact-open topology $\tau$ (i.e. the uniform convergence on the compact sets) on $\Co\left(X\right)$ is unbounded (it is clear that it is locally solid linear). Indeed, since $\1$ is a topological unit, it follows that $\0\le f_{\alpha}\xrightarrow[]{u\tau}\0$ iff $\1\wedge f_{\alpha}\xrightarrow[]{\tau}\0$, which is easily seen to be equivalent to $f_{\alpha}\xrightarrow[]{\tau}\0$.
\qed\end{example}

The following result is a refinement of part (iv) of Proposition \ref{unba}.

\begin{proposition}\label{unbaa}If $\eta$ is linear and idempotent, then $f_{\alpha}\xrightarrow[]{u_{E}\eta} f$, iff $e\vee f_{\alpha}\wedge h\xrightarrow[]{\eta} e\vee f\wedge h$, for every $e,h\in E$ with $e\le h$. In particular, $0_{F}\le f_{\alpha}\xrightarrow[]{u_{E}\eta} f\in F_{+}$ iff $h\wedge f_{\alpha}\xrightarrow[]{\eta} h\wedge f$, for every $h\in E_{+}$.
\end{proposition}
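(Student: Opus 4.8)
The plan is to reduce both implications to two elementary facts: (a) for a locally solid convergence the lattice operations, and hence each truncation map $T_{e,h}\colon x\mapsto e\vee(x\wedge h)$ onto an order interval $[e,h]$, are continuous; and (b) local solidness of $\eta$ lets me pass from a domination $|u_{\alpha}|\le v_{\alpha}$ with $v_{\alpha}\xrightarrow{\eta}0_{F}$ to $u_{\alpha}\xrightarrow{\eta}0_{F}$. Throughout I use the definition of $u\eta=u_{F}\eta$ in the form $f_{\alpha}\xrightarrow{u\eta}f$ iff $|f_{\alpha}-f|\xrightarrow{u\eta}0_{F}$ iff $g\wedge|f_{\alpha}-f|\xrightarrow{\eta}0_{F}$ for every $g\in F_{+}$ (recall $\{|a|:a\in F\}=F_{+}$). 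The only lattice computations I will need are translation-invariance of $\vee,\wedge$ and the distributive law, so no functional calculus or representation theorem is required.

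For the forward direction, assume $f_{\alpha}\xrightarrow{u\eta}f$ and fix $e\le h$. The key estimate is the contraction bound
\[
|T_{e,h}(f_{\alpha})-T_{e,h}(f)|\le (h-e)\wedge|f_{\alpha}-f|,
\]
valid because $T_{e,h}$ is non-expansive (Birkhoff's inequalities $|a\vee c-b\vee c|\le|a-b|$ and $|a\wedge c-b\wedge c|\le|a-b|$ compose to $|T_{e,h}(x)-T_{e,h}(y)|\le|x-y|$), while both values lie in $[e,h]$, so their difference is dominated by $h-e$; a meet of two upper bounds is an upper bound. Taking $g=h-e\in F_{+}$, the right-hand side $\eta$-converges to $0_{F}$ by hypothesis, and local solidness gives $T_{e,h}(f_{\alpha})\xrightarrow{\eta}T_{e,h}(f)$. (Alternatively, this direction follows from continuity of $T_{e,h}$ together with Proposition \ref{unba}(i): the truncated nets lie in $[e,h]$, a translate of $[0_{F},h-e]$, on which $u\eta$ and $\eta$ coincide.)

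The reverse direction is the crux. Assume $T_{e,h}(f_{\alpha})\xrightarrow{\eta}T_{e,h}(f)$ for all $e\le h$; I must produce $g\wedge|f_{\alpha}-f|\xrightarrow{\eta}0_{F}$ for every $g\in F_{+}$. I would split $|f_{\alpha}-f|=(f_{\alpha}-f)^{+}+(f_{\alpha}-f)^{-}$ and extract each summand from a single recentred truncation. For the positive part, truncating to $[f,f+g]$ and subtracting the constant limit $f$ gives, by translation-invariance and distributivity,
\[
f\vee(f_{\alpha}\wedge(f+g))-f=0_{F}\vee((f_{\alpha}-f)\wedge g)=g\wedge(f_{\alpha}-f)^{+};
\]
since $T_{f,f+g}(f)=f$, the hypothesis yields $g\wedge(f_{\alpha}-f)^{+}\xrightarrow{\eta}0_{F}$. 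Symmetrically, truncating to $[f-g,f]$ gives $g\wedge(f_{\alpha}-f)^{-}=f-(f-g)\vee(f_{\alpha}\wedge f)\xrightarrow{\eta}0_{F}$. Adding the two, and using $g\wedge|f_{\alpha}-f|\le g\wedge(f_{\alpha}-f)^{+}+g\wedge(f_{\alpha}-f)^{-}$ together with additivity and local solidness, gives $g\wedge|f_{\alpha}-f|\xrightarrow{\eta}0_{F}$, as required. The one thing to spot here is precisely that each one-sided piece $g\wedge(f_{\alpha}-f)^{\pm}$ is an honest $\eta$-continuous truncation of $f_{\alpha}$ recentred at $f$; this is the main (and only) obstacle, and once the two identities above are written down the rest is routine.

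Finally, the ``in particular'' statement is the specialization to $f_{\alpha},f\in F_{+}$. For the forward half, take $e=0_{F}$ and $h\in F_{+}$: positivity makes $0_{F}\vee(f_{\alpha}\wedge h)=h\wedge f_{\alpha}$ and $0_{F}\vee(f\wedge h)=h\wedge f$, so $h\wedge f_{\alpha}\xrightarrow{\eta}h\wedge f$. For the converse, the weaker hypothesis $h\wedge f_{\alpha}\xrightarrow{\eta}h\wedge f$ ($h\in F_{+}$) already supplies the two one-sided pieces directly: with $h=f$ one gets $(f_{\alpha}-f)^{-}=f-f\wedge f_{\alpha}\xrightarrow{\eta}0_{F}$, and with $h=f+g$ one gets $(f+g)\wedge f_{\alpha}\xrightarrow{\eta}f$, whence $g\wedge(f_{\alpha}-f)^{+}=((f+g)\wedge f_{\alpha}-f)^{+}\xrightarrow{\eta}0_{F}$; summing as before gives $f_{\alpha}\xrightarrow{u\eta}f$.
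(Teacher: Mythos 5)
Your proof is correct, and it takes a genuinely different route from the paper's in two respects. For necessity, you work from scratch via the contraction bound $\left|e\vee\left(f_{\alpha}\wedge h\right)-e\vee\left(f\wedge h\right)\right|\le\left(h-e\right)\wedge\left|f_{\alpha}-f\right|$ (Birkhoff's inequalities compose to non-expansiveness, and both truncations lie in $\left[e,h\right]$), after which local solidness finishes immediately; the paper instead invokes part (i) of Proposition \ref{unba} --- that $\eta$ and $u\eta$ coincide on order intervals --- together with continuity of the lattice operations for the locally solid convergence $u\eta$. Your estimate is self-contained and does not depend on Proposition \ref{unba} at all (your parenthetical alternative is exactly the paper's argument). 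For sufficiency, the underlying trick is the same --- recentre the truncations at the limit $f$ --- but the logical organization is inverted: the paper first proves the positive special case (the ``in particular'' claim, via $h=f$ and $h=f+g$) and then deduces the general claim by splitting into $f_{\alpha}^{\pm}$ through the choices $e=0_{F}$ and $h=0_{F}$, whereas you prove the general claim directly from the intervals $\left[f,f+g\right]$ and $\left[f-g,f\right]$, extracting $g\wedge\left(f_{\alpha}-f\right)^{+}$ and $g\wedge\left(f_{\alpha}-f\right)^{-}$ in one identity each, and only afterwards treat the positive case --- correctly giving it its own short argument, since its hypothesis (only meets with $h\in F_{+}$) is formally weaker than the general one and so the converse there is not a literal specialization. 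What each approach buys: yours is shorter, symmetric in the two one-sided pieces, and independent of earlier machinery; the paper's reuses Proposition \ref{unba}(i), which it has already established, and lets the positive case carry the load once. Your supporting lattice identities (translation invariance, distributivity, $\left(f_{\alpha}-f\right)^{-}=f-f\wedge f_{\alpha}$, and $g\wedge\left(a+b\right)\le g\wedge a+g\wedge b$ on $F_{+}$) all check out.
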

\begin{proof}
Necessity in both claims follows from (iv) of Proposition \ref{unba}. Let us prove the sufficiency in the second claim first.

Fix $e\in E_{+}$ and note that $e\wedge f_{\alpha}\xrightarrow[]{\eta} e\wedge f$ implies $\left(e-e\wedge f\right)\wedge \left(f_{\alpha}-e\wedge f\right)\xrightarrow[]{\eta}0_{F}$. Taking the positive part of the left hand side we get that $\left(e-f\right)^{+}\wedge \left(f_{\alpha}-f\right)^{+}\le\left(e-f\right)^{+}\wedge \left(f_{\alpha}-e\wedge f\right)^{+}\xrightarrow[]{\eta}0_{F}$, hence $\left(e-f\right)^{+}\wedge \left(f_{\alpha}-f\right)^{+}\xrightarrow[]{\eta}0_{F}$. Taking the negative part yields
$$\left(e-f\right)^{+}\wedge \left(f-f_{\alpha}\right)^{+}\le \left(e-f_{\alpha}\wedge f\right)^{+}\wedge \left(f-f_{\alpha}\wedge e\right)^{+}=e\wedge f-f_{\alpha}\wedge e\wedge f=\left(f_{\alpha}-e\wedge f\right)^{-}\xrightarrow[]{\eta}0_{F},$$ and so $\left(e-f\right)^{+}\wedge \left(f_{\alpha}-f\right)^{-}\xrightarrow[]{\eta}0_{F}$. It now follows that $H=\left\{h\in F:~ \left|h\right|\wedge \left|f_{\alpha}-f\right|\to 0_{F}\right\}$ is a closed ideal, which includes $\left(e-f\right)^{+}$, for every $e\in E_{+}$. Then, $e\wedge\left(ne-f\right)^{+}\in H$, for every $n\in\N$. Moreover,
\begin{align*}
e-e\wedge\left(ne-f\right)^{+}&=\left(e-\left(ne-f\right)^{+}\right)^{+}=\left(e\wedge \left(f-\left(n-1\right)e\right)\right)^{+}=e\wedge \left(f-\left(n-1\right)e\right)^{+}\\
&\le \left(e-\frac{1}{n-1}f\right)^{+}\wedge \left(f-\left(n-1\right)e\right)^{+}+\frac{1}{n-1}f=\frac{1}{n-1}f\xrightarrow[]{\eta}0_{F},
\end{align*}
from where $e\wedge\left(ne-f\right)^{+} \xrightarrow[]{\eta}e$, and so $e\in H$. Thus, $E\subset H$, and so $f_{\alpha}\xrightarrow[]{u_{E}\eta}f$.\medskip

To prove the sufficiency in the first claim, taking $e=0_{F}$ yields that $f_{\alpha}^{+}\wedge h\xrightarrow[]{\eta} f^{+}\wedge h$, for every $h\in E_{+}$. Hence, in the light of the second claim, $f_{\alpha}^{+}\xrightarrow[]{u_{E}\eta} f^{+}$. Similarly, $f_{\alpha}^{-}\xrightarrow[]{u_{E}\eta} f^{-}$, and so $f_{\alpha}\xrightarrow[]{u_{E}\eta} f$.
\end{proof}

\begin{remark}In Proposition \ref{unbaa} the requirement of linearity of $\eta$ cannot be dropped even if it is topological (see \cite[Example 4.10]{acw2}). We do not have a linear but non-idempotent counterexample.
\qed\end{remark}

\section{Order convergence in a vector lattice}\label{ocvls}

In this section $F$ is an Archimedean vector lattice. A net $\left(f_{\alpha}\right)_{\alpha\in A}\subset F$ is \emph{increasing} if $\alpha\le \beta$ implies $f_{\alpha}\le f_{\beta}$. In this case the set $\left\{f_{\alpha}\right\}_{\alpha\in A}$ is directed, and conversely, any directed subset of $F$ may be viewed as an increasing net indexed by itself. Decreasing nets are defined analogously. For such a net $\left\{f_{\alpha}\right\}_{\alpha\in A}$ is directed downward. A net $\left(f_{\alpha}\right)_{\alpha\in A}\subset F_{+}$ converges \emph{in order} to $0_{F}$ if there is $G\subset F$ with $\bigwedge G=0_{F}$ which \emph{dominates the tails} of $\left(f_{\alpha}\right)_{\alpha\in A}$, i.e. for each $g\in G$ there is $\alpha_{0}$ such that $f_{\alpha}\le g$, for every $\alpha\ge \alpha_{0}$. Note that $G$ can always be chosen directed downward (and so considered a decreasing net), by replacing it with the set $G^{\wedge}=\left\{g_{1}\wedge ...\wedge g_{n},~ g_{1},...,g_{n}\in G\right\}$ (note that $G^{\vee}$ is defined analogously).

It is easy to see that if $H\subset F$ satisfies $\bigwedge H=0_{F}$, then $\bigwedge \left(G+H\right)=0_{F}$. Using this observation, one can show that Theorem \ref{locs} applies, and so we get a locally solid linear convergence on $F$. We denote this convergence by $f_{\alpha}\xrightarrow[]{o}f$. It is not hard to prove that every monotone net order converges to its extremum (if it exists). The following proposition explains the special role of the order convergence in the theory of vector lattices.

\begin{theorem}\label{ordex}The order convergence is the strongest locally solid additive convergence in which every monotone net converges to its supremum / infimum. That is, if $\eta$ is a locally solid additive convergence on $F$ such that $f_{\alpha}\downarrow 0_{F}$ implies $f_{\alpha}\xrightarrow[]{\eta}0_{F}$, then $f_{\alpha}\xrightarrow[]{o}f$ implies $f_{\alpha}\xrightarrow[]{\eta}f$.
\end{theorem}
\begin{proof}
We will prove that if a net $\left(f_{\alpha}\right)_{\alpha\in A}\subset F_{+}$ is order null, then $f_{\alpha}\xrightarrow[]{\eta}f$. Let $\left(g_{\beta}\right)_{\beta\in B}\subset F_{+}$ decrease to $0_{F}$ and dominate the tails of $\left(f_{\alpha}\right)_{\alpha\in A}$. Let us show that $\Gamma=\left\{\left(\alpha,\beta\right)\in A\times B,~ f_{\alpha}\le g_{\beta}\right\}$ is a directed set. Let $\left(\alpha_{1},\beta_{1}\right), \left(\alpha_{2},\beta_{2}\right)\in \Gamma$; since $B$ is directed, there is $\beta\ge\beta_{1},\beta_{2}$, and by our assumption there is $\alpha_{0}$ such that $0_{F}\le f_{\alpha}\le g_{\beta}$, for every $\alpha\ge\alpha_{0}$; since $A$ is directed, there is $\alpha\ge\alpha_{0},\alpha_{1},\alpha_{2}$; it follows that $f_{\alpha}\le g_{\beta}$, and so $\left(\alpha,\beta\right)\in \Gamma$ with $\left(\alpha,\beta\right)\ge \left(\alpha_{1},\beta_{1}\right), \left(\alpha_{2},\beta_{2}\right)$. For every $\gamma=\left(\alpha,\beta\right)\in \Gamma$ define $e_{\gamma}=f_{\alpha}$ and $h_{\gamma}=g_{\beta}$; by the definition of $\Gamma$ we have that $0_{F}\le e_{\gamma}\le h_{\gamma}$; since $h_{\gamma}\downarrow 0_{F}$, we have $e_{\gamma}\xrightarrow[]{\eta}0_{F}$. It is left to notice that $\left(f_{\alpha}\right)_{\alpha\in A}$ is a quasi-subnet of $\left(e_{\gamma}\right)_{\gamma\in \Gamma}$. Indeed, if $\gamma_{0}=\left(\alpha_{0},\beta_{0}\right)\in \Gamma$, there is $\alpha_{1}\ge \alpha_{0}$ such that $f_{\alpha}\le g_{\beta_{0}}$, for every $\alpha\ge\alpha_{1}$; hence, $\left\{f_{\alpha}\right\}_{\alpha\ge \alpha_{1}}\subset\left\{e_{\left(\alpha,\beta_{0}\right)}\right\}_{\alpha\ge \alpha_{1}}\subset\left\{e_{\gamma}\right\}_{\gamma\ge \left(\alpha_{0},\beta_{0}\right)}$. Thus, $f_{\alpha}\xrightarrow[]{\eta}0_{F}$.
\end{proof}

Recall that an operator $T:F\to E$ between vector lattices is called \emph{positive} if $TF_{+}\subset E_{+}$ and \emph{order continuous} if it is continuous with respect to the order convergences on $F$ and $E$. The following fact is well-known but, we would like to place it into the context of the specialness of the order convergence.

\begin{corollary}\label{idcont}
A positive operator $T:F\to E$ is order continuous if and only if it maps nets decreasing to $0_{F}$ into nets decreasing to $0_{E}$.
\end{corollary}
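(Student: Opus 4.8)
The plan is to prove both implications directly from the definition of order convergence, using positivity to transport domination relations and invoking Proposition \ref{contop} to reduce the harder direction to the positive cone.

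For necessity, I would suppose $T$ is order continuous and take a net $f_{\alpha}\downarrow 0_{F}$. Since $T$ is positive it is monotone, so $\left(Tf_{\alpha}\right)$ is a decreasing net in $E_{+}$. As a monotone net order converges to its extremum, $f_{\alpha}\xrightarrow[]{o}0_{F}$, and order continuity yields $Tf_{\alpha}\xrightarrow[]{o}0_{E}$. The remaining point is that a net in $E_{+}$ order converging to $0_{E}$ must have infimum $0_{E}$: by definition there is $G\subset E$ with $\bigwedge G=0_{E}$ dominating the tails of $\left(Tf_{\alpha}\right)$, so any lower bound $h$ of $\left\{Tf_{\alpha}\right\}$ satisfies $h\le Tf_{\alpha}\le g$ for $\alpha$ large, for each $g\in G$; hence $h$ is a lower bound of $G$ and $h\le\bigwedge G=0_{E}$. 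Therefore $\bigwedge_{\alpha}Tf_{\alpha}=0_{E}$, i.e. $Tf_{\alpha}\downarrow 0_{E}$.

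For sufficiency, assume $T$ carries nets decreasing to $0_{F}$ into nets decreasing to $0_{E}$. Since the order convergences on $F$ and $E$ are locally solid (additive) and $T$ is linear, Proposition \ref{contop} reduces the task to showing that $0_{F}\le f_{\alpha}\xrightarrow[]{o}0_{F}$ implies $Tf_{\alpha}\xrightarrow[]{o}0_{E}$. Given such a net, I would invoke the definition of order convergence to obtain a set dominating the tails of $\left(f_{\alpha}\right)$ with infimum $0_{F}$, which as noted in the text may be taken directed downward; this produces a net $g_{\beta}\downarrow 0_{F}$ such that each tail of $\left(f_{\alpha}\right)$ eventually lies below $g_{\beta}$, for every $\beta$.

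By hypothesis $Tg_{\beta}\downarrow 0_{E}$, so $\bigwedge_{\beta}Tg_{\beta}=0_{E}$. Since $T$ is positive and monotone, $f_{\alpha}\le g_{\beta}$ forces $Tf_{\alpha}\le Tg_{\beta}$, so for each $\beta$ the net $\left(Tf_{\alpha}\right)$ is eventually dominated by $Tg_{\beta}$; that is, $\left\{Tg_{\beta}\right\}$ dominates the tails of $\left(Tf_{\alpha}\right)$. As $Tf_{\alpha}\in E_{+}$ and the dominating set has infimum $0_{E}$, the definition of order convergence gives $Tf_{\alpha}\xrightarrow[]{o}0_{E}$, completing the proof. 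I expect the only genuine bookkeeping to be the passage from the dominating set to the directed-downward net $g_{\beta}$ and the verification that positivity transports both the domination relation and the infimum; everything else is immediate.
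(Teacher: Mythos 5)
Your proof is correct, but it takes a genuinely different route from the paper for the sufficiency direction. The paper treats this corollary as a showcase for its structural machinery: it defines an auxiliary convergence $\eta$ on $F$ by declaring $f_{\alpha}\xrightarrow[]{\eta}0_{F}$ (for positive nets) whenever $Tf_{\alpha}\xrightarrow[]{o}0_{E}$, verifies via Theorem \ref{locs} that this pullback is a locally solid linear convergence, observes that the hypothesis means every net decreasing to $0_{F}$ is $\eta$-convergent to $0_{F}$, and then invokes the universal property of Theorem \ref{ordex} to conclude that order convergence is stronger than $\eta$; Proposition \ref{contop} finishes as in your argument. You instead work directly with the definition of order convergence: you pass to the directed-downward dominating set $G^{\wedge}$, apply the hypothesis to the resulting net $g_{\beta}\downarrow 0_{F}$, and use positivity of $T$ to see that $\left\{Tg_{\beta}\right\}$ is a set with infimum $0_{E}$ dominating the tails of $\left(Tf_{\alpha}\right)$ --- which is verbatim the definition of $Tf_{\alpha}\xrightarrow[]{o}0_{E}$. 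Your route is more elementary and self-contained: it avoids checking the hypotheses of Theorem \ref{locs} for the pullback convergence and avoids the quasi-subnet diagonalization hidden inside Theorem \ref{ordex}, essentially because the target convergence here is the order convergence itself, whose definition is exactly ``there exists a dominating set with infimum zero.'' What the paper's argument buys in exchange is generality and context: the same pullback-plus-maximality pattern proves continuity of $T$ into \emph{any} locally solid convergence in which decreasing-to-zero nets converge, which is the point the paper wants to emphasize (placing the well-known fact ``into the context of the specialness of the order convergence''). Your necessity argument (monotonicity of $Tf_{\alpha}$ plus the observation that an order-null positive net dominated by a set with infimum $0_{E}$ has infimum $0_{E}$) is also fine; the paper omits it as routine.
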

\begin{proof}
We only need to prove sufficiency. Consider the convergence $\eta$ on $F$ defined by $f_{\alpha}\xrightarrow[]{\eta} 0_{F}$, for $\left(f_{\alpha}\right)_{\alpha\in A}\subset F_{+}$, if $Tf_{\alpha}\xrightarrow[]{o} 0_{E}$. It is not hard to show that $\eta$ satisfies the conditions in Theorem \ref{locs} and hence determines a locally solid linear convergence on $F$. By assumption, every net which decreases to $0_{F}$ converges to $0_{F}$ with respect to $\eta$. Hence, according to Theorem \ref{ordex}, the order convergence is stronger than $\eta$, and so $T$ is order continuous.
\end{proof}

Let us now confirm that the concept of order convergence on a vector lattice is consistent with the concept of order convergence on a general partially ordered set (see e.g. \cite{acw}). The following auxiliary fact is often useful.

\begin{lemma}\label{arch}If $\varnothing\ne G$ be order bounded from below and $f\in F_{+}$ is such that $G-f\subset G$, then $f=0_{F}$.
\end{lemma}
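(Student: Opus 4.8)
The plan is to iterate the hypothesis $G-f\subseteq G$ and then invoke the Archimedean property of $F$. Since $G\neq\varnothing$, fix some $g\in G$. The inclusion $G-f\subseteq G$ gives $g-f\in G$, and applying it again gives $(g-f)-f=g-2f\in G$; a trivial induction then yields $g-nf\in G$ for every $n\in\N$. This is the crux of the argument: the single algebraic condition on $G$ encodes an entire ray $\{g-nf\}_{n}$ sitting inside $G$.

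Next I would bring in the hypothesis that $G$ is order bounded from below. Let $m\in F$ be a lower bound, so that $m\le g'$ for every $g'\in G$. Applying this to the elements produced above, $m\le g-nf$ for all $n\in\N$, which rearranges to $nf\le g-m$. Observe also that $g-m\ge 0_{F}$, since $g\in G$ and $m$ is a lower bound of $G$; this is what lets me legitimately feed $g-m$ into the Archimedean condition.

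Finally, dividing $nf\le g-m$ by $n$ gives $f\le\frac{1}{n}(g-m)$ for every $n\in\N$, so $f$ is a lower bound of the set $\left\{\frac{1}{n}(g-m):n\in\N\right\}$. Because $F$ is Archimedean we have $\bigwedge_{n}\frac{1}{n}(g-m)=0_{F}$, and a lower bound of a set is dominated by its infimum, so $f\le 0_{F}$. Combined with the assumption $f\in F_{+}$, i.e. $f\ge 0_{F}$, this forces $f=0_{F}$.

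I do not anticipate a genuine obstacle here; the proof is short and self-contained. The only two points that warrant a moment's care are the induction establishing $g-nf\in G$ for all $n$ (making sure the inclusion is applied to the newly produced elements, not just to $g$) and the correct direction of the Archimedean step, namely that $f$ being a lower bound of a family whose infimum is $0_{F}$ yields $f\le 0_{F}$ rather than the reverse inequality.
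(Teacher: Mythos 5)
Your proof is correct and follows essentially the same route as the paper's: iterate $G-f\subset G$ to get $g-nf\in G$, use a lower bound $h$ of $G$ to obtain $nf\le g-h$ for all $n$, and conclude by the Archimedean property. The only difference is that you spell out the final Archimedean step (dividing by $n$ and taking the infimum), which the paper leaves implicit.
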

\begin{proof}
Assume that $G\ge h\in F$. Since $G$ is nonempty, there is $g\in G$. Then, $g-f\in G$ and arguing by induction, it follows that $g- nf\in G\ge h$, and so $g-h\ge nf$, for every $n\in\N$. Since $F$ is Archimedean and $f\ge 0_{F}$, it follows that $f= 0_{F}$.
\end{proof}

We will call $f\in F$ an \emph{eventual upper bound} for a net $\left(f_{\alpha}\right)_{\alpha\in A}\subset F$ if there is $\alpha_{0}$ such that $f_{\alpha}\le f$, for every $\alpha\ge\alpha_{0}$. We will denote the collection of all eventual upper bounds of this net by $\left(f_{\alpha}\right)_{\alpha\in A}^{\nearrow}$. \emph{Eventual lower bounds} are defined analogously and their collection is denoted by $\left(f_{\alpha}\right)_{\alpha\in A}^{\searrow}$. Note that $\left(f_{\alpha}\right)_{\alpha\in A}^{\searrow}\le\left(f_{\alpha}\right)_{\alpha\in A}^{\nearrow}$. The following was first proven in \cite{pap1}.

\begin{proposition}[Papangelou]\label{ocs}Let $\left(f_{\alpha}\right)_{\alpha\in A}\subset F$ be an eventually bounded net and let $f\in F$. Then:
\item[(i)] $f_{\alpha}\xrightarrow[]{o}f$ iff $\bigvee\left(f_{\alpha}\right)_{\alpha\in A}^{\searrow}=f=\bigwedge\left(f_{\alpha}\right)_{\alpha\in A}^{\nearrow}$.
\item[(ii)] $\left(f_{\alpha}\right)_{\alpha\in A}^{\nearrow}\ge f$ iff $\bigvee\limits_{\alpha\ge\alpha_{0}} f\wedge f_{\alpha}=f$, for every $\alpha_{0}$.
\end{proposition}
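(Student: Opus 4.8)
The plan is to reason throughout with the two sets $U=\left(f_{\alpha}\right)_{\alpha\in A}^{\nearrow}$ and $L=\left(f_{\alpha}\right)_{\alpha\in A}^{\searrow}$ of eventual upper and lower bounds, both nonempty since the net is eventually bounded, and to keep in mind that, by definition, $f_{\alpha}\xrightarrow[]{o}f$ means precisely that some $G\subset F_{+}$ with $\bigwedge G=0_{F}$ dominates the tails of $\left(\left|f_{\alpha}-f\right|\right)_{\alpha\in A}$. I will repeatedly use the elementary identities $f-f\wedge g=\left(f-g\right)^{+}$ and $g+\left(f-g\right)^{+}=f\vee g$.

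For the forward direction of (i), I would take $f_{\alpha}\xrightarrow[]{o}f$ and fix a dominating $G$. For each $g\in G$ one eventually has $f-g\le f_{\alpha}\le f+g$, so $f+g\in U$ and $f-g\in L$; hence $\bigwedge U\le\bigwedge_{g\in G}\left(f+g\right)=f+\bigwedge G=f$. For the reverse inequality I would show every $u\in U$ is $\ge f$: passing the thresholds for $u$ and for a given $g$ gives $f-g\le f_{\alpha}\le u$, so $u\ge f-g$ for all $g$, whence $u\ge\bigvee_{g\in G}\left(f-g\right)=f-\bigwedge G=f$. Thus $\bigwedge U=f$, and the mirror argument yields $\bigvee L=f$.

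The converse of (i) is the substantive ``consistency'' direction. Assuming $\bigvee L=f=\bigwedge U$, I would set $G=\left\{u-l:~u\in U,~l\in L\right\}$. Since $f$ is a lower bound of $U$ and an upper bound of $L$, we have $u\ge f\ge l$, so $G\subset F_{+}$; and for fixed $u,l$ we eventually have $l\le f_{\alpha}\le u$, so $\left|f_{\alpha}-f\right|\le\left(u-f\right)\vee\left(f-l\right)\le u-l$, meaning $G$ dominates the tails. It then remains to verify $\bigwedge G=0_{F}$: clearly $0_{F}$ is a lower bound, and if $w\le u-l$ for all $u,l$, then fixing $l$ gives $w+l\le\bigwedge U=f$, hence $w\le\bigwedge_{l\in L}\left(f-l\right)=f-\bigvee L=0_{F}$. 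This produces the required dominating set, so $f_{\alpha}\xrightarrow[]{o}f$.

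For (ii) the converse is immediate: if $f_{\alpha}\le u$ for $\alpha\ge\alpha_{0}$, then $f\wedge f_{\alpha}\le f\wedge u$ on that tail, so $f=\bigvee_{\alpha\ge\alpha_{0}}f\wedge f_{\alpha}\le f\wedge u\le u$. The forward direction, where Lemma \ref{arch} enters, is the main obstacle. Given $U\ge f$, I would fix $\alpha_{0}$; since $f$ bounds $\left\{f\wedge f_{\alpha}\right\}_{\alpha\ge\alpha_{0}}$ from above, I would take an arbitrary upper bound $h$, replace it by $f\wedge h$ to assume $h\le f$, and set $d=f-h\ge 0$, so that $d\le f-f\wedge f_{\alpha}=\left(f-f_{\alpha}\right)^{+}$ for $\alpha\ge\alpha_{0}$. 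The crux is then the observation that $U-d\subset U$: for $u\in U$ and $\alpha$ past the thresholds of $u$ and $\alpha_{0}$, one has $f_{\alpha}+d\le f_{\alpha}+\left(f-f_{\alpha}\right)^{+}=f\vee f_{\alpha}\le u$, using $f\le u$, so $u-d\in U$. As $U$ is nonempty and bounded below by $f$, Lemma \ref{arch} forces $d=0_{F}$, i.e. $h=f$, so $f$ is the least upper bound and $\bigvee_{\alpha\ge\alpha_{0}}f\wedge f_{\alpha}=f$.
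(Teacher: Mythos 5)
Your proposal is correct and follows essentially the same route as the paper: in (i) you use the translates $f\pm G$ for necessity and the difference set $U-L$ for sufficiency, and in (ii) you apply Lemma \ref{arch} to the set of eventual upper bounds translated by $d=\left(f-h\right)^{+}$, which is exactly the paper's element $\left(f-e\right)^{+}$. The only (harmless) differences are cosmetic: you verify $\bigwedge\left(U-L\right)=0_{F}$ explicitly where the paper calls it clear, and you normalize the upper bound to $h\le f$ before invoking the lemma.
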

\begin{proof}
(i): Sufficiency: Let $G=\left(f_{\alpha}\right)_{\alpha\in A}^{\nearrow}-\left(f_{\alpha}\right)_{\alpha\in A}^{\searrow}$; clearly, $\bigwedge G=0_{F}$, and for every $g=h-e$, where $h\in \left(f_{\alpha}\right)_{\alpha\in A}^{\nearrow}$ and $e\in \left(f_{\alpha}\right)_{\alpha\in A}^{\searrow}$, there is $\alpha_{0}$ such that $e\le f_{\alpha}\le h$, for every $\alpha\ge\alpha_{0}$. Since also $e\le f\le h$, it follows that $\left|f_{\alpha}-f\right|\le h-e=g$, hence $G$ dominates the tails of $\left(\left|f_{\alpha}-f\right|\right)_{\alpha\in A}$, therefore $\left|f_{\alpha}-f\right|\xrightarrow[]{o}0_{F}$, and so $f_{\alpha}\xrightarrow[]{o}f$.\medskip

Necessity: Let $G\subset F$ satisfy $\bigwedge G=0_{F}$ and dominate the tails of $\left(\left|f_{\alpha}-f\right|\right)_{\alpha\in A}$. The latter means that for every $g\in G$ there is $\alpha_{0}$ such that $\left|f_{\alpha}-f\right|\le g$, from where $f-g\le f_{\alpha}\le f+g$, for every $\alpha\ge\alpha_{0}$. Hence, $f-G\subset \left(f_{\alpha}\right)_{\alpha\in A}^{\searrow}$, while $f+G\subset \left(f_{\alpha}\right)_{\alpha\in A}^{\nearrow}$. Therefore, any upper bound of $\left(f_{\alpha}\right)_{\alpha\in A}^{\searrow}$ has to be above $f$, and every lower bound of $\left(f_{\alpha}\right)_{\alpha\in A}^{\nearrow}$ has to be below $f$. Since we also have $\left(f_{\alpha}\right)_{\alpha\in A}^{\searrow}\le\left(f_{\alpha}\right)_{\alpha\in A}^{\nearrow}$, it follows that $f$ is an upper bound for $\left(f_{\alpha}\right)_{\alpha\in A}^{\searrow}$, and a lower bound for $\left(f_{\alpha}\right)_{\alpha\in A}^{\nearrow}$. Thus, $\bigvee\left(f_{\alpha}\right)_{\alpha\in A}^{\searrow}=f=\bigwedge\left(f_{\alpha}\right)_{\alpha\in A}^{\nearrow}$.\medskip

(ii): Sufficiency: Let $g\in \left(f_{\alpha}\right)_{\alpha\in A}^{\nearrow}$. There is $\alpha_{0}$ such that $g\ge f_{\alpha}$, for every $\alpha\ge\alpha_{0}$. Then, $f=\bigvee\limits_{\alpha\ge\alpha_{0}} f\wedge f_{\alpha}\le g$.\medskip

Necessity: Fix $\alpha_{0}$ and assume that $e\ge f\wedge f_{\alpha}$, for every $\alpha\ge\alpha_{0}$. Let $H=\left(f_{\alpha}\right)_{\alpha\in A}^{\nearrow}$, which is bounded below and nonempty. Let $h\in H$, and let $\alpha_{1}$ be such that $h\ge f_{\alpha}$, for every $\alpha\ge\alpha_{1}$; since $h\ge f$ we in fact have $h\ge f\vee f_{\alpha}$. Take $\alpha_{2}\ge\alpha_{0},\alpha_{1}$. For every $\alpha\ge\alpha_{2}$ we have that $h+f\wedge e\ge f\vee f_{\alpha}+ f\wedge f_{\alpha}=f+f_{\alpha}$, from where $h-\left(f-e\right)^{+}=h-f+f\wedge e\ge f_{\alpha}$. Hence, $h-\left(f-e\right)^{+}\in H$, from where $H-\left(f-e\right)^{+}\subset H$, and so according to Lemma \ref{arch} $\left(f-e\right)^{+}=0_{F}$, which yields $f\le e$. Thus, $\bigvee\limits_{\alpha\ge\alpha_{0}} f\wedge f_{\alpha}=f$.
\end{proof}

In particular, $0_{F}\le f_{\alpha}\xrightarrow[]{o}0_{F}$ iff $\bigwedge\left(f_{\alpha}\right)_{\alpha\in A}^{\nearrow}=0_{F}$, and in this event $\bigwedge\left\{f_{\alpha}\right\}_{\alpha\in A}=0_{F}$.

\begin{proposition}\label{ocls5}
Order convergence is idempotent.
\end{proposition}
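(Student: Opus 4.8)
The plan is to verify directly that $o$ satisfies the defining property of idempotency, using the characterization of order convergence by infima of eventual upper bounds (Proposition \ref{ocs}(i) and the remark following it). So suppose $\left(f_{\alpha}\right)_{\alpha\in A},\left(g_{\beta}\right)_{\beta\in B}\subset F_{+}$ satisfy $g_{\beta}\xrightarrow[]{o}0_{F}$ and $\left(f_{\alpha}-g_{\beta}\right)^{+}\xrightarrow[\alpha]{o}0_{F}$ for every $\beta$; I must produce a set $K\subset F$ with $\bigwedge K=0_{F}$ that dominates the tails of $\left(f_{\alpha}\right)_{\alpha\in A}$, for then $f_{\alpha}\xrightarrow[]{o}0_{F}$ by the very definition of order convergence.

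The candidate is $K=\left\{h+g_{\beta}:~\beta\in B,~ h\in\left(\left(f_{\alpha}-g_{\beta}\right)^{+}\right)_{\alpha}^{\nearrow}\right\}$. First I would record the elementary inequality $f_{\alpha}\le\left(f_{\alpha}-g_{\beta}\right)^{+}+g_{\beta}$, valid for all $\alpha,\beta$. Fixing $\beta$ and an eventual upper bound $h$ of $\left(\left(f_{\alpha}-g_{\beta}\right)^{+}\right)_{\alpha}$, there is $\alpha_{0}$ with $\left(f_{\alpha}-g_{\beta}\right)^{+}\le h$ for all $\alpha\ge\alpha_{0}$, whence $f_{\alpha}\le h+g_{\beta}$ for all $\alpha\ge\alpha_{0}$. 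Thus every element of $K$ is an eventual upper bound of $\left(f_{\alpha}\right)_{\alpha\in A}$, i.e. $K$ dominates its tails (and $K\ne\varnothing$, since an order convergent net is eventually bounded and hence admits eventual upper bounds).

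It remains to compute $\bigwedge K=0_{F}$. Since $h\ge\left(f_{\alpha}-g_{\beta}\right)^{+}\ge0_{F}$ for large $\alpha$ and $g_{\beta}\ge0_{F}$, each $h+g_{\beta}\ge0_{F}$, so $0_{F}$ is a lower bound of $K$. For the reverse, suppose $w\le h+g_{\beta}$ for all admissible $\beta,h$. Fixing $\beta$, this reads $w-g_{\beta}\le h$ for every $h\in\left(\left(f_{\alpha}-g_{\beta}\right)^{+}\right)_{\alpha}^{\nearrow}$; by the remark after Proposition \ref{ocs} we have $\bigwedge\left(\left(f_{\alpha}-g_{\beta}\right)^{+}\right)_{\alpha}^{\nearrow}=0_{F}$, so $w-g_{\beta}\le0_{F}$, i.e. $w\le g_{\beta}$. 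As this holds for all $\beta$, and $g_{\beta}\xrightarrow[]{o}0_{F}$ forces $\bigwedge_{\beta}g_{\beta}=0_{F}$ (again by that remark), we obtain $w\le0_{F}$. Hence $\bigwedge K=0_{F}$, and $f_{\alpha}\xrightarrow[]{o}0_{F}$ as required.

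The computation is essentially routine; the only thing that takes a little care is assembling the two independent infimum facts — namely $\bigwedge\left(\left(f_{\alpha}-g_{\beta}\right)^{+}\right)_{\alpha}^{\nearrow}=0_{F}$ for each fixed $\beta$, and $\bigwedge_{\beta}g_{\beta}=0_{F}$ — into the single greatest-lower-bound verification for the doubly indexed set $K$, and recognizing that the correct dominating family to write down is $h+g_{\beta}$ (shifting eventual bounds of $\left(f_{\alpha}-g_{\beta}\right)^{+}$ by the fixed element $g_{\beta}$) rather than, say, eventual upper bounds built from the $g$'s directly. Everything else is dictated by the definitions.
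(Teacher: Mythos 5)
Your proof is correct and follows essentially the same route as the paper's: the key inequality $f_{\alpha}\le\left(f_{\alpha}-g_{\beta}\right)^{+}+g_{\beta}$ gives exactly the paper's inclusion $g_{\beta}+\left(\left(f_{\alpha}-g_{\beta}\right)^{+}\right)_{\alpha\in A}^{\nearrow}\subset \left(f_{\alpha}\right)_{\alpha\in A}^{\nearrow}$, and your verification that $\bigwedge K=0_{F}$ is precisely the computation the paper leaves as ``not hard to deduce.'' The only cosmetic difference is that you conclude via the definition of order convergence using the dominating set $K$, while the paper concludes via the characterization $\bigwedge\left(f_{\alpha}\right)_{\alpha\in A}^{\nearrow}=0_{F}$ from the remark after Proposition \ref{ocs}.
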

\begin{proof}
Assume that for $\left(f_{\alpha}\right)_{\alpha\in A}\subset F_{+}$ there is $0_{F}\le g_{\beta}\xrightarrow[]{o}0_{F}$ such that $\left(f_{\alpha}-g_{\beta}\right)^{+}\xrightarrow[\alpha]{o}0_{F}$, for every $\beta$. It follows from the comment above that for every $\beta$ we have \linebreak $\bigwedge\left(\left(f_{\alpha}-g_{\beta}\right)^{+}\right)_{\alpha\in A}^{\nearrow}=0_{F}$. Note that $g_{\beta}+\left(\left(f_{\alpha}-g_{\beta}\right)^{+}\right)_{\alpha\in A}^{\nearrow}\subset \left(f_{\alpha}\right)_{\alpha\in A}^{\nearrow}$, and so every lower bound of $\left(f_{\alpha}\right)_{\alpha\in A}^{\nearrow}$ has to be below $g_{\beta}$, for every $\beta$. Since $\bigwedge\left\{g_{\beta}\right\}_{\beta\in B}=0_{F}$, it follows that $\bigwedge\left(f_{\alpha}\right)_{\alpha\in A}^{\nearrow}=0_{F}$, and so $f_{\alpha}\xrightarrow[]{o}0_{F}$.
\end{proof}

\section{Order continuity of homomorphisms}\label{ochs}

In this section $F$ is an Archimedean vector lattice. Recall that an order closed ideal is called a \emph{band}. Intersection of any collection of bands is a band. For any $G\subset F$ its disjoint complement $G^{d}$ is a band, and conversely, $E\subset F$ is a band iff $E=E^{dd}$. In fact, $G^{dd}$ is the band generated by $G\subset F$ (see \cite[Theorem 1.39]{ab}). Note that $\left(G\cup G^{d}\right)^{d}=\left\{0_{F}\right\}$, from where $\left(G\cup G^{d}\right)^{dd}=F$. If $e\in F_{+}$ is such that $\left\{e\right\}^{dd}=F$ (or equivalently $\left\{e\right\}^{d}=\left\{0_{F}\right\}$), we will call it a \emph{weak unit}.\medskip

We now start studying the \emph{unbounded order (uo)} convergence, i.e. the unbounded modification of the order convergence. It is easy to see that uo convergence is weaker than the order convergence, but is still Hausdorff, and of course it is locally solid linear. Also, a net order converges iff it uo converges and is eventually order bounded. We get the following corollary of Proposition \ref{umb} and Proposition \ref{ocls5}.

\begin{corollary}[cf. \cite{lc}]\label{lc}
For a net $\left(f_{\alpha}\right)_{\alpha\in A}\subset F_{+}$ the set $H=\left\{h\in F:~ \left|h\right|\wedge f_{\alpha}\xrightarrow[]{o} 0_{F}\right\}$ is a band. If $e\in F_{+}$ is a weak unit in $F$, then $f_{\alpha}\xrightarrow[]{uo} 0_{F}$ iff $e \wedge f_{\alpha}\xrightarrow[]{o} 0_{F}$.
\end{corollary}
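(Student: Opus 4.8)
The plan is to deduce Corollary \ref{lc} as a direct application of the two named results, so I would not re-prove anything from scratch. The corollary has two assertions. For the first, that $H=\{h\in F,\ |h|\wedge f_\alpha\xrightarrow{o}0_F\}$ is a band, I would invoke Proposition \ref{umb} together with Proposition \ref{ocls5}. Proposition \ref{umb} already establishes that $H$ is an ideal, and that $H$ is \emph{closed} whenever the ambient locally solid convergence $\eta$ is idempotent; here the relevant convergence is the order convergence $o$, which Proposition \ref{ocls5} shows to be idempotent. Thus $H$ is a closed ideal. The only remaining point is that ``closed'' here means closed with respect to the order convergence, i.e.\ order closed, which is precisely the definition of a band recalled at the start of Section \ref{ochs}.

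For the second assertion I would unwind the definition of uo convergence. By construction $uo=u(o)=u_F(o)$, so for $\left(f_\alpha\right)_{\alpha\in A}\subset F_+$ we have $f_\alpha\xrightarrow{uo}0_F$ iff $|h|\wedge f_\alpha\xrightarrow{o}0_F$ for every $h\in F$, which is exactly the statement that $H=F$. Since $H$ is an ideal containing $e$, it contains the principal ideal $F_e$, and since $H$ is a band (hence $H=H^{dd}$) it contains the band $\{e\}^{dd}$ generated by $e$. I would then use the hypothesis that $e$ is a weak unit, i.e.\ $\{e\}^{dd}=F$, to conclude that $e\wedge f_\alpha\xrightarrow{o}0_F$ forces $H=F$, hence $f_\alpha\xrightarrow{uo}0_F$. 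The converse implication is immediate because $uo$ is weaker than $o$ and $e\wedge f_\alpha\le e$ is order bounded, but more directly $f_\alpha\xrightarrow{uo}0_F$ means $|h|\wedge f_\alpha\xrightarrow{o}0_F$ for all $h$, which applied to $h=e$ gives exactly $e\wedge f_\alpha\xrightarrow{o}0_F$.

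Concretely, the forward direction is where the band structure does the work: assuming $e\wedge f_\alpha\xrightarrow{o}0_F$ puts $e\in H_+$, and since $H$ is a band we get $\{e\}^{dd}\subset H$; the weak-unit hypothesis then gives $F=\{e\}^{dd}\subset H\subset F$, so $H=F$ and every $|h|\wedge f_\alpha\xrightarrow{o}0_F$. This is essentially the same argument as part (v) of Proposition \ref{unba}, but specialized from topological units to weak units, which is available precisely because order convergence has the stronger feature that its closed ideals are bands.

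I expect the whole argument to be short, and the only genuine subtlety—what I would flag as the main point rather than an obstacle—is making sure the word ``closed'' in Proposition \ref{umb} is correctly identified with ``order closed'' for the convergence $o$, so that ``closed ideal'' upgrades to ``band.'' Everything else is bookkeeping: verifying idempotence (already done in Proposition \ref{ocls5}), translating $uo$ convergence into the condition $H=F$, and passing from the principal ideal to the generated band via $H=H^{dd}$.
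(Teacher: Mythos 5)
Your proposal is correct and is exactly the paper's intended argument: the paper states this result as an immediate corollary of Proposition \ref{ocls5} (idempotence of order convergence) and Proposition \ref{umb} (the set $H$ is an ideal, closed whenever the convergence is idempotent), with ``closed'' for the order convergence meaning order closed, hence $H$ is a band. Your handling of the weak-unit direction via $e\in H$ and $\left\{e\right\}^{dd}\subset H^{dd}=H=F$ is the standard completion of that argument and matches what the paper leaves implicit.
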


Recall that an operator $T:F\to E$ between vector lattices is called a \emph{homomorphism} if it preserves the lattice operations (equivalently if $T\left|f\right|=\left|Tf\right|$, for every $f\in F$). It is easy to see that every homomorphism is positive. The equivalence of the conditions (iii) and (v) in the following theorem were very recently proven in \cite{tag} using a different method. For other equivalent conditions see \cite[Theorem 3.10]{erz}.

\begin{theorem}\label{huo}For a homomorphism $T:F\to E$ the following conditions are equivalent:
\item[(i)] $T$ maps nets decreasing to $0_{F}$ into nets decreasing to $0_{E}$;
\item[(ii)] $T$ preserves the extremums of arbitrary sets (i.e. if $G\subset F$ is such that $g=\bigvee G$, then $Tg=\bigvee TG$, and the same for the infimum);
\item[(iii)] $T$ is order continuous;
\item[(iv)] There is an ideal $H\subset F$ such that $\left.T\right|_{H}$ is order continuous and $TF\subset \left(TH\right)^{dd}$;
\item[(v)] $T$ is uo-continuous.
\end{theorem}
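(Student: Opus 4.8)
The plan is to establish the cycle (iii) $\Rightarrow$ (iv) $\Rightarrow$ (v) $\Rightarrow$ (iii) after first disposing of the equivalence of (i), (ii), (iii). Throughout I use that a homomorphism is positive and satisfies $T(f\wedge g)=Tf\wedge Tg$ and $|Tf|=T|f|$. The equivalence (i) $\Leftrightarrow$ (iii) is immediate from Corollary \ref{idcont}. For (iii) (equivalently (i)) $\Rightarrow$ (ii): given $g=\bigvee G$, let $(h_\gamma)$ be the increasing net of finite suprema from $G$, so that $g-h_\gamma\downarrow 0_F$; then $Tg-Th_\gamma=T(g-h_\gamma)\downarrow 0_E$, and since $T$ preserves finite suprema this gives $Tg=\bigvee Th_\gamma=\bigvee TG$. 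Conversely (ii) $\Rightarrow$ (i): applying (ii) to $-G$ and using linearity shows $T$ also preserves arbitrary infima, so a net $f_\alpha\downarrow 0_F$ is sent to a decreasing net with $\bigwedge Tf_\alpha=0_E$, that is, $Tf_\alpha\downarrow 0_E$.

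The implication (iii) $\Rightarrow$ (iv) is trivial: take $H=F$ and note $TF\subset(TF)^{dd}$. For (v) $\Rightarrow$ (iii) I would invoke the fact that a net order converges precisely when it uo converges and is eventually order bounded. If $f_\alpha\xrightarrow[]{o}f$ then $f_\alpha\xrightarrow[]{uo}f$, so $Tf_\alpha\xrightarrow[]{uo}Tf$ by (v); and if $|f_\alpha|\le u$ eventually, then $|Tf_\alpha|=T|f_\alpha|\le Tu$ eventually, so $(Tf_\alpha)$ is eventually order bounded and hence $Tf_\alpha\xrightarrow[]{o}Tf$.

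The main work, and the step I expect to be the genuine obstacle, is (iv) $\Rightarrow$ (v). By Proposition \ref{contop} it suffices to show that $0_F\le f_\alpha\xrightarrow[]{uo}0_F$ implies $Tf_\alpha\xrightarrow[]{uo}0_E$. For each $h\in H_+$ the net $h\wedge f_\alpha$ lies in $[0_F,h]\subset H$ and order converges to $0_F$ in $F$; since $H$ is an ideal, intersecting a downward family witnessing this convergence with $h$ produces such a family inside $H$, so $h\wedge f_\alpha$ order converges to $0_F$ intrinsically in $H$. Order continuity of $\left.T\right|_H$ together with the homomorphism property then yields $Th\wedge Tf_\alpha=T(h\wedge f_\alpha)\xrightarrow[]{o}0_E$ for every $h\in H_+$. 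The difficulty is that this is only convergence unbounded relative to $TH$, whereas uo convergence on $E$ is unbounded relative to all of $E$; bridging this gap is the crux of the proof.

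To close the gap I would pass to the set $K=\{k\in E:\ |k|\wedge Tf_\alpha\xrightarrow[]{o}0_E\}$, which is a band by Corollary \ref{lc}. The computation above gives $TH\subset K$, hence $(TH)^{dd}\subset K$; on the other hand, any $k\in(TH)^d$ is disjoint from $Tf_\alpha\in TF\subset(TH)^{dd}$, so $|k|\wedge Tf_\alpha=0_E$ and therefore $(TH)^d\subset K$ as well. Since $\big((TH)\cup(TH)^d\big)^{dd}=E$, the band $K$ containing both $(TH)^{dd}$ and $(TH)^d$ must be all of $E$; thus $k\wedge Tf_\alpha\xrightarrow[]{o}0_E$ for every $k\in E_+$, which is exactly $Tf_\alpha\xrightarrow[]{uo}0_E$. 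The decisive point --- and the least obvious one --- is recognizing that the hypothesis $TF\subset(TH)^{dd}$ is precisely what forces the disjoint part $(TH)^d$ into $K$, so that the two complementary bands jointly generate $E$.
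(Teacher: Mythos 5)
Your proposal is correct and follows essentially the same route as the paper: the crux (iv)$\Rightarrow$(v) is argued identically, using Corollary \ref{lc} to see that $K=\{k\in E:\ |k|\wedge Tf_\alpha\xrightarrow[]{o}0_E\}$ is a band containing both $TH$ and $(TH)^d$ (the latter forced by $TF\subset(TH)^{dd}$), hence all of $E$, and the remaining implications are handled via Corollary \ref{idcont}, finite suprema $G^{\vee}$, and the fact that order convergence is uo convergence plus eventual order boundedness. The only cosmetic difference is that you close the cycle with (v)$\Rightarrow$(iii) where the paper proves (v)$\Rightarrow$(i), and you spell out explicitly (correctly) why $h\wedge f_\alpha$ order converges to $0_F$ intrinsically in the ideal $H$, a point the paper leaves implicit.
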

\begin{proof}
For (i)$\Rightarrow$(ii) observe that for any $G\subset F$ we have $\bigvee G^{\vee}=\bigvee G$, and $G^{\vee}$ is an increasing net. (i)$\Rightarrow$(iii) follows from Proposition \ref{idcont}. The converse implications are trivial in both cases, as is (iii)$\Rightarrow$(iv). (v)$\Rightarrow$(i) follows from the fact that for a decreasing net in $F$ to decrease to $0_{F}$ is equivalent to uo converge to $0_{F}$.\medskip

(iv)$\Rightarrow$(v): Let $\left(f_{\alpha}\right)_{\alpha\in A}\subset F_{+}$ be uo-null. Let $e\in \left(TH\right)_{+}$, so that there is $h\in H_{+}$ with $Th=e$. Since $\left(f_{\alpha}\right)_{\alpha\in A}$ is uo-null, $H\ni h\wedge f_{\alpha}\xrightarrow[]{o}0_{F}$, and from the order continuity of $\left.T\right|_{H}$, we have $e\wedge Tf_{\alpha}=T\left(h\wedge f_{\alpha}\right)\xrightarrow[]{o}0_{E}$. If $e\in \left(TF\right)_{+}^{d}$, we have that $e\wedge Tf_{\alpha}=0_{E}$. According to Corollary \ref{lc}, $G=\left\{e\in E:~ \left|e\right|\wedge Tf_{\alpha}\xrightarrow[]{o} 0_{F}\right\}$ is a band, which contains both $\left(TH\right)_{+}$ and $\left(TF\right)_{+}^{d}=\left(TH\right)_{+}^{d}$, therefore it is equal to $E$. Thus, $Tf_{\alpha}\xrightarrow[]{uo}0_{F}$, and so $T$ is uo-continuous.
\end{proof}

A sublattice $E\subset F$ is called \emph{regular} if for every $G\subset E$ such that $\bigwedge_{E}G=0_{F}$ (i.e. the infimum within the order structure of $E$) we have $\bigwedge_{F}G=0_{F}$. It is easy to see that every ideal is regular. The following was proven in \cite{pap1} and rediscovered in \cite{as} and \cite{gtx}.

\begin{theorem}[Papangelou]\label{gt1}Let $F$ be Archimedean. For a sublattice $E\subset F$ the following conditions are equivalent:
\item[(i)] $E$ is regular;
\item[(ii)] The inclusion operator from $E$ into $F$ is order continuous;
\item[(iii)] If $\left(f_{\alpha}\right)_{\alpha\in A}\subset E_{+}$ is order bounded in $E$, then $f_{\alpha}\xrightarrow[]{o} 0_{F}$ in $E$ iff $f_{\alpha}\xrightarrow[]{o} 0_{F}$ in $F$;
\item[(iv)] The inclusion from $\left[0_{E},e\right]$ into $F$ is an order embedding, for every $e\in E_{+}$;
\item[(v)] The inclusion operator from $E$ into $F$ is uo-continuous;
\item[(vi)] The inclusion operator from $E$ into $F$ is a uo-embedding.
\end{theorem}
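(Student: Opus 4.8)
The plan is to anchor the entire equivalence on the single fact that the inclusion $\iota\colon E\to F$ is a lattice homomorphism, so that Theorem \ref{huo} applies to it verbatim. Under this identification, condition (i) of Theorem \ref{huo} (mapping nets decreasing to the origin into nets decreasing to the origin) says exactly that whenever a directed-downward $G\subset E$ has $\bigwedge_{E} G=0_{F}$, one also has $\bigwedge_{F} G=0_{F}$; since an arbitrary $G$ with $\bigwedge_{E}G=0_{F}$ may be replaced by $G^{\wedge}$, this is precisely regularity, our (i). Likewise condition (iii) of Theorem \ref{huo} is our (ii), and condition (v) is our (v). Thus one appeal to Theorem \ref{huo} delivers (i)$\Leftrightarrow$(ii)$\Leftrightarrow$(v) with no further work, and this is the shortening over the original argument.

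It remains to fold in the two-sided conditions (iii), (iv), (vi). First I would record a reflection lemma: if $E$ is regular and $S\subset E$ satisfies $\bigvee_{E}S=s\in E$, then $\bigvee_{F}S=s$ as well; this follows by applying regularity to $s-S\subset E$, whose infimum in $E$ is $0_{F}$. The forward halves of (iii) and (vi) are immediate, since order (resp.\ uo) convergence in $E$ transfers to $F$ by the order continuity in (ii) (resp.\ uo-continuity in (v)), while (iii)$\Rightarrow$(ii) and (vi)$\Rightarrow$(v) are trivial because an order convergent net is eventually order bounded (so one passes to an order-bounded tail). The genuine content is therefore the reverse direction of (iii): for $\left(f_{\alpha}\right)_{\alpha\in A}\subset E_{+}$ order bounded in $E$ by some $u\in E_{+}$, the hypothesis $f_{\alpha}\xrightarrow[]{o}0_{F}$ in $F$ must force $f_{\alpha}\xrightarrow[]{o}0_{F}$ in $E$.

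For this crux I would use Papangelou's characterization in Proposition \ref{ocs}. By the remark following it, it suffices to prove $\bigwedge_{E}\left(f_{\alpha}\right)_{\alpha\in A}^{\nearrow}=0_{F}$, with eventual upper bounds and infimum computed in $E$. Every eventual upper bound is $\ge 0_{F}$, so any lower bound of this set may be truncated to a positive $w\in E_{+}$; by Proposition \ref{ocs}(ii) applied in $E$ this says $\bigvee^{E}_{\alpha\ge\alpha_{0}}(w\wedge f_{\alpha})=w$ for every $\alpha_{0}$. The reflection lemma promotes each of these suprema to $F$, giving $\bigvee^{F}_{\alpha\ge\alpha_{0}}(w\wedge f_{\alpha})=w$, and Proposition \ref{ocs}(ii) read in $F$ then shows $w$ is a lower bound of $\left(f_{\alpha}\right)_{\alpha\in A}^{\nearrow}$ computed in $F$. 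But $f_{\alpha}\xrightarrow[]{o}0_{F}$ in $F$ means $\bigwedge_{F}\left(f_{\alpha}\right)_{\alpha\in A}^{\nearrow}=0_{F}$, whence $w\le 0_{F}$ and $w=0_{F}$. I expect this to be the main obstacle, as it is the only place regularity is truly used; a moving-bump sequence in $\Co[0,1]\subset\R^{[0,1]}$ shows the implication fails without it.

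Finally, (iii)$\Leftrightarrow$(iv) is a reformulation, since a positive net order bounded in $E$ lies in some interval $\left[0_{E},e\right]$ and within such an interval order convergence coincides with that in $E$. For (vi) I would settle the remaining reverse inclusion via the reverse half of (iii): if $f_{\alpha}\xrightarrow[]{uo}0_{F}$ in $F$, then $e\wedge\left|f_{\alpha}\right|\xrightarrow[]{o}0_{F}$ in $F$ for each $e\in E_{+}$, and since $e\wedge\left|f_{\alpha}\right|$ is order bounded in $E$, the reverse direction of (iii) yields $e\wedge\left|f_{\alpha}\right|\xrightarrow[]{o}0_{F}$ in $E$; letting $e$ range over $E_{+}$ gives $f_{\alpha}\xrightarrow[]{uo}0_{F}$ in $E$. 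Together with the trivial implications this closes both cycles and establishes the equivalence of all six conditions.
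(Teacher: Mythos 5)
Your proposal is correct and follows essentially the same route as the paper's proof: (i)$\Leftrightarrow$(ii)$\Leftrightarrow$(v) come from one application of Theorem \ref{huo} to the inclusion homomorphism, the crux (i)$\Rightarrow$(iii) is handled exactly as in the paper via Proposition \ref{ocs}(ii) applied in $E$ and then in $F$, with regularity used to transfer the suprema $\bigvee_{\alpha\ge\alpha_{0}}\left(w\wedge f_{\alpha}\right)=w$ from $E$ to $F$ (your explicit ``reflection lemma'' is precisely what the paper invokes implicitly), and (vi) is obtained by applying the reverse half of (iii) to the nets $\left(e\wedge\left|f_{\alpha}\right|\right)_{\alpha\in A}$, $e\in E_{+}$. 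The only deviations are cosmetic --- you close the cycle with (iii)$\Rightarrow$(ii) where the paper uses (iii)$\Rightarrow$(i), your truncation of lower bounds to $E_{+}$ is harmless but unnecessary, and your parenthetical moving-bump claim about $\Co\left[0,1\right]\subset\R^{\left[0,1\right]}$ is not load-bearing (such sequences tend either to fail order convergence in $\R^{\left[0,1\right]}$ or to be order null in both lattices, so that aside would need a different witness) --- none of which affects the validity of the argument.
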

\begin{proof}
(i)$\Leftrightarrow$(ii)$\Leftrightarrow$(v) follows from Theorem \ref{huo}, and (iv) is a rephrasing of (iii). (vi)$\Rightarrow$(v) and (iii)$\Rightarrow$(ii) are trivial.\medskip

(i)$\Rightarrow$(iii): Since (i)$\Rightarrow$(ii) is already established we have that $E_{+}\ni f_{\alpha}\xrightarrow[]{o} 0_{F}$ in $E$ implies $f_{\alpha}\xrightarrow[]{o} 0_{F}$ in $F$, and so we only need to prove the converse. Let $\left(f_{\alpha}\right)_{\alpha\in A}^{\nearrow_{F}}$ and $\left(f_{\alpha}\right)_{\alpha\in A}^{\nearrow_{E}}$ stand for the collections of eventual upper bounds of $\left(f_{\alpha}\right)_{\alpha\in A}$ in $F$ and $E$, respectively. We know that $\bigwedge\left(f_{\alpha}\right)_{\alpha\in A}^{\nearrow_{F}}=0_{F}$, and need to show that $\bigwedge\left(f_{\alpha}\right)_{\alpha\in A}^{\nearrow_{E}}=0_{F}$. Assume that $\left(f_{\alpha}\right)_{\alpha\in A}^{\nearrow_{E}}\ge g\in E$, and so according to part (ii) of Proposition \ref{ocs}, $\bigvee\limits_{\alpha\ge\alpha_{0}} g\wedge f_{\alpha}=g$, for every $\alpha_{0}$. Note that the infimum is valid in both $E$ and $F$, hence, using part (ii) of Proposition \ref{ocs} again, we conclude that $\bigwedge\left(f_{\alpha}\right)_{\alpha\in A}^{\nearrow_{F}}\ge g$, and so $g \le 0_{F}$. Thus, $\bigwedge\left(f_{\alpha}\right)_{\alpha\in A}^{\nearrow_{E}}=0_{F}$.\medskip

Now that the equivalence of the conditions (i)-(v) are established it is left to prove that if under these conditions $\left(f_{\alpha}\right)_{\alpha\in A}\subset E_{+}$ is uo-null in $F$, then it is uo-null in $E$. Indeed, for every $e\in E_{+}$ the net $\left(e\wedge f_{\alpha}\right)_{\alpha\in A}\subset\left[0_{F},e\right]\cap F$ is order null in $F$. It follows that $e\wedge f_{\alpha}\xrightarrow[]{o} 0_{F}$ in $E$. As $e$ was arbitrary, we conclude that $f_{\alpha}\xrightarrow[]{uo} 0_{F}$ in $E$.
\end{proof}

Let us now investigate when a sublattice is order embedded. Clearly, regularity is a necessary condition. Recall that an ideal $H\subset F$ is called a \emph{projection band} if $F=H+H^{d}$ (and so there is a projection $P:F\to H$, which is a homomorphism with $H=PF$). We will need the following auxiliary claim:

\begin{lemma}\label{oh}Let $H\subset F$ be an ideal, such that every $G\subset H$ which is order bounded in $F$ is order bounded in $H$. Then, $H$ is a projection band.
\end{lemma}
\begin{proof}
Take $f\in F_{+}$. The set $H\cap \left[0_{F},f\right]$ is order bounded in $F$, and so there is $h\in H$ such that $H\cap \left[0_{F},f\right]\le h$. Then, $h\wedge f\in H\cap \left[0_{F},f\right]$ satisfies $H\cap \left[0_{F},f\right]\le h\wedge f$, and so $h\wedge f=\bigvee \left(H\cap \left[0_{F},f\right]\right)$. Let us show that $\left(f-h\right)^{+}\bot H$. Indeed, if $g\in\left[0_{F},\left(f-h\right)^{+}\right]\cap H$, then $g+f\wedge h\in H$ and $g+f\wedge h\le \left(f-h\right)^{+}+f\wedge h=f$ imply that $g+f\wedge h\in H\cap \left[0_{F},f\right]\le f\wedge h$, and so $g=0_{F}$. Thus, $f=f\wedge h+\left(f-h\right)^{+}\in H+H^{d}$. Since $f$ was chosen arbitrarily, we conclude that $H$ is a projection band.
\end{proof}

It is easy to see that the converse is also true. Moreover, it is enough to only consider $G$'s which consist of mutually disjoint elements (see \cite{erz0}).

A sublattice $E\subset F$ is called \emph{order dense} if $E\cap \left(0_{F},f\right]\ne\varnothing$, for every $f\in F_{+}\backslash \left\{0_{F}\right\}$. This condition is equivalent to the fact that $f=\bigvee\left(E\cap \left[0_{F},f\right]\right)$, for every $f\in F_{+}$ (see \cite[Theorem 1.34]{ab}), and implies regularity. A regular / order dense sublattice of a regular / order dense sublattice is regular / order dense. We now present a partial characterization of order embedded sublattices.

\begin{proposition}\label{oh2}
If $E\subset F$ is a regular sublattice, such that $I\left(E\right)$ is a projection band, then $E$ is order embedded into $F$. Conversely, if $E$ is order dense in $I\left(E\right)$ and order embedded, then $I\left(E\right)$ is a projection band.
\end{proposition}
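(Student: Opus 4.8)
For the first (direct) implication, set $P\colon F\to I(E)$ to be the band projection afforded by the hypothesis that $I(E)$ is a projection band; recall that $P$ is a positive order continuous homomorphism with $Pg\le g$ for $g\in F_+$, and that $I(E)=\{f\in F:\ |f|\le e\text{ for some }e\in E_+\}$ since $E$ is a sublattice. Because $E$ is regular, Theorem \ref{gt1} already gives that the inclusion $E\hookrightarrow F$ is order continuous, so only the reverse embedding direction needs proof: for $(f_\alpha)\subset E$ with $f\in E$, I must show that $f_\alpha\xrightarrow[]{o}f$ in $F$ forces $f_\alpha\xrightarrow[]{o}f$ in $E$. Translating by $f$ and passing to $|f_\alpha-f|\in E$, it suffices to treat a net $(f_\alpha)\subset E_+$ with $f_\alpha\xrightarrow[]{o}0_F$ in $F$.

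The plan here is to manufacture an eventual upper bound inside $E$ using $P$. Order nullity in $F$ yields some $g\in F_+$ with $f_\alpha\le g$ eventually; applying $P$ and using $Pf_\alpha=f_\alpha$ gives $f_\alpha=Pf_\alpha\le Pg$ eventually, while $Pg\in I(E)_+$ provides $e\in E_+$ with $Pg\le e$. Thus $f_\alpha\le e\in E_+$ eventually, so after passing to a tail $(f_\alpha)\subset[0_E,e]$ is order bounded in $E$ and order null in $F$; Theorem \ref{gt1}(iii) then delivers $f_\alpha\xrightarrow[]{o}0_F$ in $E$, completing the embedding. This is where the projection band hypothesis is genuinely used: it is precisely what drags the $F$-eventual-bound back into $E$.

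For the converse, I would verify condition (ii) of Proposition \ref{oh} for the ideal $H=I(E)$ and invoke the already established implication (ii)$\Rightarrow$(i). After replacing each $g$ by $g^{+}$, it is enough to bound an arbitrary $G\subset I(E)_+$ with $G\le f\in F_+$ by an element of $I(E)$. Order density of $E$ in $I(E)$ reduces this to a single instance: if the upward directed set $D=E\cap[0_F,f]$ admits an upper bound $e_f\in E$, then for each $g\in G$ one has $E\cap[0_F,g]\subseteq D$ and $g=\bigvee_{I(E)}\!\big(E\cap[0_F,g]\big)\le e_f$, so $e_f$ bounds $G$ in $I(E)$. Hence the whole converse is reduced to showing that order embedding forces $D=E\cap[0_F,f]$ to be bounded above in $E$.

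The crux, and the step I expect to be the main obstacle, is exactly this last claim; I would prove its contrapositive by a scaling construction that exploits the embedding. Index a net by $D\times\N$ via $x_{(e,n)}=\tfrac1n e\in E$. Since $\tfrac1n e\le\tfrac1n f$ and $\bigwedge_n \tfrac1n f=0_F$ by the Archimedean property, the family $\{\tfrac1n f\}_{n}$ dominates the tails, so $x_{(e,n)}\xrightarrow[]{o}0_F$ in $F$. If $D$ had \emph{no} upper bound in $E$, then this net would have no eventual upper bound in $E$: any $w\in E$ with $x_{(e,n)}\le w$ for $(e,n)\ge(e_0,n_0)$ would, upon fixing $n=n_0$ and varying $e\ge e_0$, give $e\le n_0 w$, and then $e'\le e'\vee e_0\le n_0 w$ for every $e'\in D$, making $n_0w\in E$ an upper bound of $D$, a contradiction. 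A net with no eventual upper bound in $E$ cannot be order null in $E$, so order convergence in $E$ and $F$ would disagree, contradicting the embedding. Therefore $D$ is bounded above in $E$, and by the reduction above $I(E)$ satisfies Proposition \ref{oh}(ii), hence is a projection band. A small routine point to record is that the symmetric lower bound is obtained identically, and that order density is used in the stated form $f=\bigvee_{I(E)}\!\big(E\cap[0_F,f]\big)$.
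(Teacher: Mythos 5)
Your proof is correct. The first implication is essentially the paper's own argument: both you and the paper use the band projection $P$ onto $I\left(E\right)$ to convert an eventual order bound in $F$ into an eventual order bound lying in $E$, and then invoke Theorem \ref{gt1} (regularity of $E$) to upgrade order nullity in $F$ to order nullity in $E$ for the order bounded tail. For the converse you follow the same skeleton as the paper --- verify condition (ii) of Proposition \ref{oh} for $H=I\left(E\right)$, and use order density in the form $g=\bigvee_{I\left(E\right)}\left(E\cap\left[0_{F},g\right]\right)$ to reduce everything to bounding $D=E\cap\left[0_{F},f\right]$ inside $E$ --- but your proof of that crucial boundedness claim takes a genuinely different route. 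The paper views $E\cap\left[0_{F},f\right]$ as an increasing net, notes it is order bounded in $F$, hence order Cauchy in $F$ by \cite[Lemma 2.1]{lc}, hence order Cauchy (and therefore eventually order bounded) in $E$ because the inclusion is an order embedding. You instead argue by contradiction with the scaled net $x_{\left(e,n\right)}=\frac{1}{n}e$ indexed by the directed set $D\times\N$: this net is order null in $F$ by the Archimedean property, hence order null in $E$ by the embedding, hence admits an eventual upper bound $w\in E$, and unwinding the indices shows that $n_{0}w\in E$ bounds all of $D$. Your route is more self-contained: it avoids the notion of order Cauchy nets and the external citation, using only the standing Archimedean assumption and the definition of order convergence, at the cost of a slightly more elaborate index-set construction; the paper's route is shorter modulo the quoted lemma and sits in the familiar Cauchy-completeness framework. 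Both arguments apply the order embedding hypothesis to a single well-chosen net, so the underlying mechanism is the same even though the nets differ.
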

\begin{proof}
We start with the first claim. If a net $\left(e_{\alpha}\right)_{\alpha\in A}\subset E$ is order null in $F$, it is eventually order bounded in $F$. Let $f\in F_{+}$ and $\alpha_{0}$ be such that $\left\{e_{\alpha}\right\}_{\alpha\ge\alpha_{0}}\subset\left[-f,f\right]$. Let $P:F\to I\left(E\right)$ be the band projection. There is $e\in E_{+}$ such that $e\ge Pf$, hence $\left\{e_{\alpha}\right\}_{\alpha\ge\alpha_{0}}\subset\left[-e,e\right]$. Since according to Theorem \ref{gt1}, $\left[-e,e\right]\cap E$ is order embedded into $F$, it follows that  $\left(e_{\alpha}\right)_{\alpha\in A}$ is order null in $E$.\medskip

Let us prove the second claim. Fix $f\in F_{+}$ and consider $E\cap \left[0_{F},f\right]$ to be an increasing net indexed by itself. This net is order bounded in $F$, and so it is order Cauchy in $F$ (see \cite[Lemma 2.1]{lc}). Hence, this net is order Cauchy in $E$, and therefore eventually order bounded in $E$. It follows that there is $e\in E$ such that $E\cap \left[0_{F},f\right]\le e$.  Take $g\in I\left(E\right)\cap \left[0_{F},f\right]$. Since $E$ is order dense in $I\left(E\right)$, we have that $g=\bigvee \left(E\cap \left[0_{F},g\right]\right)$. As $E\cap \left[0_{F},g\right]\subset E\cap \left[0_{F},f\right]\le e$, it follows that $g\le e$. Hence, $I\left(E\right)\cap \left[0_{F},f\right]\le e$, and so according to Lemma \ref{oh}, $I\left(E\right)$ is a projection band.
\end{proof}

It seems hopeless to find a better characterization of order embeddedness, since any finitely dimensional sublattices are order embedded. However, for the ideals the situation is simple.

\begin{corollary}An ideal $H\subset E$ is a projection band iff inclusion operator from $H$ into $F$ is an order embedding.
\end{corollary}

\section{Characterizations of uo convergence}\label{cuos}

In this section we add some characterizations of the unbounded order convergence, which will liberate it from the ``derivative role'' with respect to the order convergence. Everywhere in the section $F$ is an Archimedean vector lattice. The following lemma will be utilized.

\begin{lemma}\label{arch2}If $f,e\in F_{+}$ are such that $\left(f-ne\right)^{+}\ge e$, for every $n\in\N$, then $e=0_{F}$.\end{lemma}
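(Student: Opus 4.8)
The plan is to reduce the statement to Lemma \ref{arch} by producing a nonempty, order-bounded-below set $G\subset F$ together with an element playing the role of $f$ in that lemma such that $G-e\subset G$. The natural candidate is the family of positive parts
\[
G=\left\{\left(f-ne\right)^{+}:~ n\in\N\right\},
\]
which is nonempty and order bounded from below by $e$ (hence by $0_{F}$). The whole difficulty is then to verify that subtracting $e$ maps $G$ into itself, i.e. that $\left(f-\left(n+1\right)e\right)^{+}=\left(f-ne\right)^{+}-e$ for every $n$.

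The key algebraic step is exactly this identity, which holds precisely because of the hypothesis $\left(f-ne\right)^{+}\ge e$. I would isolate it as a general lattice fact: if $a\in F$ and $e\in F_{+}$ satisfy $a^{+}\ge e$, then $\left(a-e\right)^{+}=a^{+}-e$. To prove it, write $a^{+}-e=\left(a\vee 0_{F}\right)-e=\left(a-e\right)\vee\left(-e\right)$. The assumption $a^{+}\ge e$ says exactly that this element is $\ge 0_{F}$, so $\left(a-e\right)\vee\left(-e\right)\ge\left(a-e\right)\vee 0_{F}$, while the reverse inequality is automatic from $-e\le 0_{F}$. Hence $\left(a-e\right)\vee\left(-e\right)=\left(a-e\right)^{+}$, which is the claim. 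Applying this with $a=f-ne$, whose positive part dominates $e$ by hypothesis, yields $\left(f-\left(n+1\right)e\right)^{+}=\left(f-ne\right)^{+}-e$.

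With the identity in hand, each element $\left(f-ne\right)^{+}$ of $G$ satisfies $\left(f-ne\right)^{+}-e=\left(f-\left(n+1\right)e\right)^{+}\in G$, so $G-e\subset G$. Since $G$ is nonempty, order bounded from below, and $e\in F_{+}$, Lemma \ref{arch} forces $e=0_{F}$. Alternatively one can bypass Lemma \ref{arch} entirely: iterating the recursion gives $\left(f-ne\right)^{+}=\left(f-e\right)^{+}-\left(n-1\right)e$, and combining this with $\left(f-ne\right)^{+}\ge e$ produces $\left(f-e\right)^{+}\ge ne$ for every $n$; applying the Archimedean property to the fixed positive element $\left(f-e\right)^{+}$ then delivers $e=0_{F}$ directly.

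The only genuine obstacle is establishing the lattice identity $\left(a-e\right)^{+}=a^{+}-e$ under the assumption $a^{+}\ge e$; once it is available, everything else is routine bookkeeping and a single invocation of the already-proven Lemma \ref{arch}.
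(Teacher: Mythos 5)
Your proof is correct, but it is packaged differently from the paper's. The paper's argument is more economical at the key step: setting $f_{n}=\left(f-ne\right)^{+}$, it only needs the \emph{inequality} $f_{n}-e\ge f_{n+1}$, which is immediate because $f_{n}-e\ge 0_{F}$ (the hypothesis) and $f_{n}-e\ge f-\left(n+1\right)e$, and $f_{n+1}$ is by definition the supremum of these two lower bounds; induction then gives $f_{n}\ge me$ for every $m$, and the Archimedean property finishes. You instead establish the exact \emph{identity} $\left(f-\left(n+1\right)e\right)^{+}=\left(f-ne\right)^{+}-e$ (your translation argument $a^{+}-e=\left(a-e\right)\vee\left(-e\right)$ is valid), and you genuinely need this equality because you feed the orbit $G=\left\{\left(f-ne\right)^{+}:~n\in\N\right\}$ into Lemma \ref{arch}, which requires actual membership $G-e\subset G$ rather than mere domination. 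So your main route buys a clean reduction to an already-proven lemma at the cost of a slightly stronger lattice computation, while the paper avoids the identity altogether but redoes the induction by hand; note that if you had enlarged $G$ to the set of all elements dominating some $f_{n}$, the paper's one-line inequality would have sufficed to invoke Lemma \ref{arch} as well. Your alternative ending --- iterating the identity to get $\left(f-e\right)^{+}\ge ne$ and applying the Archimedean property to that fixed element --- is essentially the paper's proof with the inequality upgraded to an equality.
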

\begin{proof}
Let $f_{n}=\left(f-ne\right)^{+}$; we have that $f_{n}-e\ge 0_{F}$ and $f_{n}-e\ge f-\left(n+1\right)e$, from where $f_{n}-e\ge f_{n+1}\ge 0_{F}$, and so $f_{n}\ge f_{n+1}+e\ge e$, for every $n\in\N$. Assume that we have proven that $f_{n}\ge me$, for some $m\in\N$, and every $n\in\N$. Then, for every $n\in\N$ we have $f_{n}\ge f_{n+1}+e\ge \left(m+1\right)e$. Hence, by induction $f_{n}\ge me$, for every $m,n\in\N$, and so $e=0_{F}$.
\end{proof}

Recall that $F$ is said to have a \emph{principal projection property (PPP)} if $\left\{e\right\}^{dd}$ is a projection band, for every $e\in F$; the corresponding band projection is denoted by $P_{e}$. We also call $h$ a \emph{component} of $e$ if $h\bot e-h$. The condition (ii) in the following result appeared in \cite{pap2}, while the condition (iv) is a modification of a condition in \cite{et}.

\begin{theorem}\label{uo}For $\left(f_{\alpha}\right)_{\alpha\in A}\subset F_{+}$ the following conditions are equivalent:
\item[(i)] $f_{\alpha}\xrightarrow[]{uo}0_{F}$;
\item[(ii)] If $h\in F_{+}$ is such that $\bigvee\limits_{\alpha\ge\alpha_{0}}h\wedge f_{\alpha}=h$, for every $\alpha_{0}$, then $h=0_{F}$;
\item[(iii)] For every $h>0_{F}$ there are $e\in\left(0_{F},h\right]$ and $\alpha_{0}$ such that $\left(f_{\alpha}-h\right)^{+}\bot e$, for every $\alpha\ge\alpha_{0}$.\medskip

Moreover, if $F$ has PPP, the conditions above are equivalent to:
\item[(iv)] For every $h>0_{F}$ there is a component $e$ of $h$ and $\alpha_{0}$ such that $P_{e}f_{\alpha}\le e$, for every $\alpha\ge\alpha_{0}$.
\end{theorem}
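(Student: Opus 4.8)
The plan is to prove the cycle (i)$\Rightarrow$(ii)$\Rightarrow$(iii)$\Rightarrow$(i), and then separately show (iii)$\Leftrightarrow$(iv) under the PPP assumption. The unifying tool throughout will be Proposition \ref{ocs}(ii), which translates the statement $\left(f_{\alpha}\right)_{\alpha\in A}^{\nearrow}\ge h$ into the condition $\bigvee_{\alpha\ge\alpha_{0}}h\wedge f_{\alpha}=h$. Combined with the concluding remark after Proposition \ref{ocs} (that $0_{F}\le f_{\alpha}\xrightarrow{o}0_{F}$ iff $\bigwedge\left(f_{\alpha}\right)_{\alpha\in A}^{\nearrow}=0_{F}$) this gives an order-theoretic handle on uo convergence: a net in $F_{+}$ is uo-null iff for every $h\in F_{+}$ the truncated net $h\wedge f_{\alpha}$ is order-null, i.e. $\bigwedge\left(h\wedge f_{\alpha}\right)_{\alpha\in A}^{\nearrow}=0_{F}$.

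First I would establish (i)$\Leftrightarrow$(ii). By the definition of $uo=u\eta$ for $\eta=o$, we have $f_{\alpha}\xrightarrow{uo}0_{F}$ iff $h\wedge f_{\alpha}\xrightarrow{o}0_{F}$ for every $h\in F_{+}$, which by the comment after Proposition \ref{ocs} means $\bigwedge\left(h\wedge f_{\alpha}\right)_{\alpha\in A}^{\nearrow}=0_{F}$ for every $h$. Now if some $h>0_{F}$ satisfies $\bigvee_{\alpha\ge\alpha_{0}}h\wedge f_{\alpha}=h$ for all $\alpha_{0}$, then by Proposition \ref{ocs}(ii) we get $h\in\left(h\wedge f_{\alpha}\right)_{\alpha\in A}^{\searrow\nearrow}$—more precisely $h$ is a lower bound of the eventual upper bounds of $\left(h\wedge f_{\alpha}\right)$—so $\bigwedge\left(h\wedge f_{\alpha}\right)_{\alpha\in A}^{\nearrow}\ge h>0_{F}$, contradicting uo-nullity; this gives (i)$\Rightarrow$(ii). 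Conversely, if (i) fails, some $h\wedge f_{\alpha}$ fails to be order-null, so $g:=\bigwedge\left(h\wedge f_{\alpha}\right)_{\alpha\in A}^{\nearrow}>0_{F}$; one checks $0_{F}\le g\le h$ and that $g$ witnesses the failure of (ii), since $g$ lies below every eventual upper bound of the truncations. This reverse direction is the place where I would be most careful about manipulating eventual upper bounds correctly.

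Next, for (ii)$\Leftrightarrow$(iii), I would reformulate the disjointness condition. Note $\left(f_{\alpha}-h\right)^{+}\bot e$ says exactly that $e\wedge\left(f_{\alpha}-h\right)^{+}=0_{F}$, equivalently (for $e\le h$) that $e\wedge f_{\alpha}=e\wedge f_{\alpha}\wedge h\le h$ in a controlled way—the key identity being $e\wedge f_{\alpha}\le e\wedge\left(h+\left(f_{\alpha}-h\right)^{+}\right)$ and using solidity. The direction (iii)$\Rightarrow$(ii) is contrapositive: if some $h>0_{F}$ has $\bigvee_{\alpha\ge\alpha_{0}}h\wedge f_{\alpha}=h$ for all $\alpha_{0}$, I would show no $e\in(0_{F},h]$ can satisfy the disjointness eventually, because $\bigvee_{\alpha}e\wedge f_{\alpha}$ would recover $e$ and force $e\wedge\left(f_{\alpha}-h\right)^{+}$ to be non-negligible. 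For (ii)$\Rightarrow$(iii), given $h>0_{F}$ with no valid $e$, I would try to manufacture the offending element for (ii); here Lemma \ref{arch2} is the designated tool—the hypothesis $\left(f-ne\right)^{+}\ge e$ forcing $e=0_{F}$ is exactly shaped to rule out the persistent-overlap scenario, so I expect to extract from the failure of (iii) a nonzero $h'$ with $\bigvee h'\wedge f_{\alpha}=h'$. I anticipate this implication to be the main obstacle, since producing the witness for (ii) from a purely local (per-$h$) disjointness failure is the least mechanical step.

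Finally, for the PPP equivalence (iii)$\Leftrightarrow$(iv), I would use that under PPP every $\{e\}^{dd}$ is a projection band with projection $P_{e}$, and that components of $h$ correspond to band projections restricted to $\{h\}^{dd}$. The bridge is the identity $P_{e}f_{\alpha}\le e \Leftrightarrow \left(f_{\alpha}-e\right)^{+}\bot\,(\text{something in }\{e\}^{dd})$, letting me convert the disjointness statement of (iii) into the order statement of (iv) and back. For (iii)$\Rightarrow$(iv), given $h>0_{F}$ I obtain $e\in(0_{F},h]$ with $\left(f_{\alpha}-h\right)^{+}\bot e$; passing to the component $P_{e}h$ of $h$ and using that $P_{e}$ is a homomorphism commuting with the lattice operations, I would verify $P_{e}f_{\alpha}\le P_{e}h\le h$ (choosing the component to be $P_{e}h$, which is a genuine component of $h$ since PPP guarantees $h=P_{e}h+(I-P_{e})h$ with the summands disjoint). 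For the converse, a component $e$ of $h$ with $P_{e}f_{\alpha}\le e$ yields disjointness of $\left(f_{\alpha}-h\right)^{+}$ with $e$ directly, since on the band $\{e\}^{dd}$ we have $f_{\alpha}\wedge e = P_{e}f_{\alpha}\wedge e \le e$, whence the overlap with $\left(f_{\alpha}-h\right)^{+}$ vanishes. Throughout I would lean on the standard band-projection calculus from \cite{ab}.
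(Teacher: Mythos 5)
Your overall architecture (passing through (ii) and (iii), then handling (iii)$\Leftrightarrow$(iv) via the component $P_{e}h$) matches the paper's, and your (i)$\Rightarrow$(ii) via Proposition \ref{ocs}(ii) is sound, as is the PPP part modulo checking $P_{P_{e}h}=P_{e}$. But there is a genuine gap exactly at the hard implication of the theorem: getting from (iii) back to (ii) (equivalently to (i)). Your contrapositive plan claims that if $h>0_{F}$ satisfies $\bigvee_{\alpha\ge\alpha_{0}}h\wedge f_{\alpha}=h$ for all $\alpha_{0}$, then no $e\in\left(0_{F},h\right]$ can be eventually disjoint from $\left(f_{\alpha}-h\right)^{+}$, ``because $\bigvee_{\alpha}e\wedge f_{\alpha}$ would recover $e$ and force $e\wedge\left(f_{\alpha}-h\right)^{+}$ to be non-negligible.'' This is false: take the constant net $f_{\alpha}\equiv h$. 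Then $\bigvee_{\alpha\ge\alpha_{0}}h\wedge f_{\alpha}=h$ for every $\alpha_{0}$, yet $\left(f_{\alpha}-h\right)^{+}=0_{F}$ is disjoint from \emph{every} $e\in\left(0_{F},h\right]$. So the failure of (iii) can never be exhibited at the same element $h$ witnessing the failure of (ii); one must move to a strictly smaller test vector. This is precisely what the paper's proof of (iii)$\Rightarrow$(i) does: assuming $G=\left(f_{\alpha}\right)_{\alpha\in A}^{\nearrow}\ge h>0_{F}$, it applies (iii) to $\frac{h}{2}$, derives $f_{\alpha}\le\left(g-ne\right)\vee\frac{h}{2}$ for each $g\in G$ and all $n$, concludes $\left(g-\frac{h}{2}-ne\right)^{+}\ge e$, and kills $e$ with the Archimedean Lemma \ref{arch2} (or, in the alternative proof, passes to $F^{\delta}$ and uses Lemma \ref{inf}). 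None of this machinery appears in your proposal. Moreover, you earmark Lemma \ref{arch2} for (ii)$\Rightarrow$(iii), which is in fact the trivial direction: rewriting (ii) as ``for every $h>0_{F}$ there are $g\in\left[0_{F},h\right)$ and $\alpha_{0}$ with $h\wedge f_{\alpha}\le g$ for $\alpha\ge\alpha_{0}$,'' one simply sets $e=h-g$ and observes $e\le h-h\wedge f_{\alpha}=\left(h-f_{\alpha}\right)^{+}\bot\left(f_{\alpha}-h\right)^{+}$. The difficulty is thus misallocated, and the one implication that genuinely needs an Archimedean argument is supported only by an incorrect claim.

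A secondary, fixable slip: in (ii)$\Rightarrow$(i) you set $g:=\bigwedge\left(h\wedge f_{\alpha}\right)_{\alpha\in A}^{\nearrow}>0_{F}$ when order nullity fails. Since $F$ is not assumed order complete, the negation of $\bigwedge\left(h\wedge f_{\alpha}\right)_{\alpha\in A}^{\nearrow}=0_{F}$ does not give existence of this infimum; it only gives a lower bound $w$ of the eventual upper bounds with $w\not\le 0_{F}$. The repair is to use $w^{+}$, which is a nonzero positive lower bound satisfying $w^{+}\le h$, and then Proposition \ref{ocs}(ii) yields $\bigvee_{\alpha\ge\alpha_{0}}w^{+}\wedge f_{\alpha}=w^{+}$ for every $\alpha_{0}$, so $w^{+}$ witnesses the failure of (ii). With that repair your (i)$\Leftrightarrow$(ii) stands; the missing content of the proposal remains the step from (iii) back to the other conditions.
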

\begin{proof}
First, observe that (ii) is equivalent to the fact that for every $h>0_{F}$ there are $g\in \left[0_{F},h\right)$ and $\alpha_{0}$ such that $h\wedge f_{\alpha}\le g$, for every $\alpha\ge\alpha_{0}$ (because this precisely means that $\bigvee\limits_{\alpha\ge\alpha_{0}}h\wedge f_{\alpha}\ne h$).\medskip

(i)$\Rightarrow$(ii): Take $h>0_{F}$. Since $h\wedge f_{\alpha}\xrightarrow[]{o}0_{F}$, it follows that $\bigwedge \left(h\wedge f_{\alpha}\right)_{\alpha\in A}^{\nearrow}=0_{F}$. Hence, there are $e\not\ge h$ and $\alpha_{0}$ such that $h\wedge f_{\alpha}\le e$, for every $\alpha\ge\alpha_{0}$. Then, $g=e\wedge h$ satisfies the requirements.\medskip

(ii)$\Rightarrow$(iii): Take $e=h-g\in\left(0_{F},h\right]$. Then, for $\alpha\ge\alpha_{0}$ we have $e\le h-h\wedge f_{\alpha}=\left(h-f_{\alpha}\right)^{+}\bot \left(f_{\alpha}-h\right)^{+}$.\medskip

(iii)$\Rightarrow$(i): First, assume that $\left(f_{\alpha}\right)_{\alpha\in A}$ is order bounded, and so $G=\left(f_{\alpha}\right)_{\alpha\in A}^{\nearrow}\ne\varnothing$. Assume that $G\ge h>0_{F}$. Take any $g\in G$ and $e\in\left(0_{F},\frac{h}{2}\right]$ and find $\alpha_{0}$ such that $\left(f_{\alpha}-\frac{h}{2}\right)^{+}\bot e$, for every $\alpha\ge\alpha_{0}$. Let $\alpha_{1}$ be such that $f_{\alpha}\le g$, for every $\alpha\ge\alpha_{1}$. For $\alpha\ge \alpha_{0},\alpha_{1}$ and any $n\in\N$ we have $$f_{\alpha}+ne\le ne+\left(f_{\alpha}-\frac{h}{2}\right)^{+}+\frac{h}{2}=ne\vee\left(f_{\alpha}-\frac{h}{2}\right)+\frac{h}{2}\le ne\vee\left(g-\frac{h}{2}\right)+\frac{h}{2}=g\vee\left(\frac{h}{2}+ne\right),$$ from where $f_{\alpha}\le \left(g-ne\right)\vee \frac{h}{2}$. Hence, $\left(g-ne\right)\vee \frac{h}{2}\in G$, and so $\left(g-ne\right)\vee \frac{h}{2}\ge h$. Therefore, $\left(g-\frac{h}{2}-ne\right)^{+} \ge \frac{h}{2}\ge e$, for every $n$, and so $e=0_{F}$, according to Lemma \ref{arch2}. We have reached a contradiction, and so $\bigwedge G=0_{F}$. Consequently, $f_{\alpha}\xrightarrow[]{o}0_{F}$.\medskip

Now, in the general case, for every $f\in F_{+}$ the net $\left(f\wedge f_{\alpha}\right)_{\alpha\in A}$ satisfies the conditions of the special case, and so $f\wedge f_{\alpha}\xrightarrow[]{o}0_{F}$. Thus, $f_{\alpha}\xrightarrow[]{uo}0_{F}$.\medskip

It is easy to see that (iv)$\Rightarrow$(iii), and so it is left to show the converse under the assumption that $F$ has PPP. Let $h>0_{F}$, and let $\alpha_{0}$ and $e$ be produced by (iii). Since $h\ge e$ we have $g=P_{e}h\ge e$. Note that $g$ is a component of $h$. On the other hand, $\left\{g\right\}^{dd}\subset \left\{e\right\}^{dd}$, and so $P_{g}=P_{e}$. From $\left(f_{\alpha}-h\right)^{+}\bot e$ it follows that $P_{g}\left(f_{\alpha}-h\right)=P_{e}\left(f_{\alpha}-h\right)\le 0_{F}$, and so $P_{g}f_{\alpha}\le P_{g}h=g$.
\end{proof}

Let us present an alternative proof of (iii)$\Rightarrow$(i). It utilizes the following lemma.

\begin{lemma}\label{inf}
Let $G\subset F_{+}$. Then, $\bigwedge G=0_{F}$ iff for every $h>0_{F}$ there are $e\in\left(0_{F},h\right]$ and $g\in G$ such that $\left(g-h\right)^{+}\bot e$.
\end{lemma}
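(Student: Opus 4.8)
The plan is to translate the condition $\bigwedge G=0_{F}$ into a statement about positive lower bounds and then verify each implication separately. Since $G\subset F_{+}$, the element $0_{F}$ is always a lower bound of $G$, and whenever $v$ is a lower bound so is $v^{+}=v\vee 0_{F}$ (because $v\le g$ and $0_{F}\le g$ force $v^{+}\le g$ for each $g\in G$). Hence $\bigwedge G=0_{F}$ holds precisely when the only lower bound $w\in F_{+}$ of $G$ is $w=0_{F}$. I will use this reformulation throughout; note also that it forces $G\neq\varnothing$, so the existential ``there is $g\in G$'' appearing on the right-hand side is not vacuous.

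For necessity, fix $h>0_{F}$. By the reformulation $h$ cannot be a lower bound of $G$ (otherwise $h\le\bigwedge G=0_{F}$, contradicting $h>0_{F}$), so there is $g\in G$ with $h\not\le g$, i.e. $e:=\left(h-g\right)^{+}>0_{F}$. Since $g\ge 0_{F}$ we have $e=\left(h-g\right)^{+}\le h$, so $e\in\left(0_{F},h\right]$, and $e\bot\left(g-h\right)^{+}$ holds automatically from $\left(h-g\right)^{+}\wedge\left(g-h\right)^{+}=0_{F}$. This exhibits the required $e$ and $g$.

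For sufficiency I argue by contradiction: suppose the right-hand condition holds but $\bigwedge G\neq 0_{F}$, so by the reformulation there is a lower bound $w>0_{F}$. The crux is choosing the right test element; applying the hypothesis at $h=w$ does not lead anywhere, and the correct choice is $h=\frac{1}{2}w>0_{F}$. With this $h$, every $g\in G$ satisfies $g-h\ge w-\frac{1}{2}w=\frac{1}{2}w=h>0_{F}$, so $\left(g-h\right)^{+}=g-h\ge h$. Consequently, for any $e\in\left(0_{F},h\right]$ we get $e\le h\le\left(g-h\right)^{+}$, whence $e\wedge\left(g-h\right)^{+}=e>0_{F}$, so $e$ is never disjoint from $\left(g-h\right)^{+}$. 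As this holds for every $g\in G$, the hypothesis at $h=\frac{1}{2}w$ is contradicted. Therefore every positive lower bound of $G$ is $0_{F}$, i.e. $\bigwedge G=0_{F}$. The only genuine obstacle is this selection of $h$: halving the lower bound $w$ is exactly what forces $\left(g-h\right)^{+}$ to dominate $h$ uniformly over all of $G$, which simultaneously rules out every candidate disjoint $e$.
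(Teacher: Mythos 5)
Your proof is correct and takes essentially the same route as the paper's: the necessity argument uses the identical witness $e=\left(h-g\right)^{+}$ for some $g\in G$ with $g\not\ge h$, and the sufficiency argument is the same halving trick (testing the hypothesis at half of a putative positive lower bound, so that $\left(g-h\right)^{+}\ge h\ge e$ kills any candidate disjoint $e$). The only difference is that you make explicit the reformulation of $\bigwedge G=0_{F}$ in terms of positive lower bounds, which the paper uses implicitly.
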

\begin{proof}
Necessity: If $h>0_{F}$, there is $g\in G$ such that $g\not\ge h$, and so $e=\left(h-g\right)^{+}\in \left(0_{F},h\right]$ and  $\left(g-h\right)^{+}\bot e$.\medskip

Sufficiency: Assume that $G\ge h>0_{F}$. There are $e\in\left(0_{F},\frac{h}{2}\right]$ and $g\in G$ such that $\left(g-\frac{h}{2}\right)^{+}\bot e$. But this contradicts $g\ge h\ge 2e>0_{F}$.
\end{proof}

Recall that a vector lattice is called \emph{order complete} if every order bounded set has a supremum and infimum. There is a unique vector lattice $F^{\delta}$, which is called the \emph{order completion}, such that $F$ embeds into $F^{\delta}$ as an order dense sublattice with $I\left(F\right)=F^{\delta}$ (see \cite[Theorem 32.5]{zl}).

\begin{proof}[An alternative proof of (iii)$\Rightarrow$(i) in Theorem \ref{uo}]
First, assume that $F$ is order complete and $\left(f_{\alpha}\right)_{\alpha\in A}$ is order bounded. For every $h>0_{F}$ there are $e\in\left(0_{F},h\right]$ and $\alpha_{0}$ such that $\left(f_{\alpha}-h\right)^{+}\bot e$, for every $\alpha\ge\alpha_{0}$. Since $\left(f_{\alpha}\right)_{\alpha\in A}$ is order bounded and $F$ is order complete, $g=\bigvee\limits_{\alpha\ge\alpha_{0}}f_{\alpha}$ exists, is an element of $\left(f_{\alpha}\right)_{\alpha\in A}^{\nearrow}$, and satisfies $\left(g-h\right)^{+}\bot e$. Hence, according to Lemma \ref{inf}, $\bigwedge \left(f_{\alpha}\right)_{\alpha\in A}^{\nearrow}=0_{F}$, and so $f_{\alpha}\xrightarrow[]{o}0_{F}$.\medskip

Let us now consider the general case. Let $f\in F_{+}$; the net $\left(f\wedge f_{\alpha}\right)_{\alpha\in A}$ is order bounded in $F$. If $h\in F^{\delta}_{+}\backslash\left\{0_{F^{\delta}}\right\}$, then since $F$ is order dense in $F^{\delta}$, there is $g\in \left(0_{F^{\delta}},h\right]\cap F$. Therefore, there is $e\in\left(0_{F},g\right]\subset \left(0_{F^{\delta}},h\right]$ such that $\left(f_{\alpha}-h\right)^{+}\le \left(f_{\alpha}-g\right)^{+}\bot e$, for every $\alpha\ge\alpha_{0}$. Hence, $\left(f\wedge f_{\alpha}\right)_{\alpha\in A}$ satisfies the condition of the special case and so $f\wedge f_{\alpha}\xrightarrow[]{o}0_{F^{\delta}}$ in $F^{\delta}$. According to Theorem \ref{gt1}, it follows that $f\wedge f_{\alpha}\xrightarrow[]{o}0_{F}$ in $F$, and since $f$ was arbitrary, $f_{\alpha}\xrightarrow[]{uo}0_{F}$.
\end{proof}

We would like to present another characterization of the uo convergence originating from \cite{pap2}. We will call $f\in F$ an \emph{interelement} of a net $\left(f_{\alpha}\right)_{\alpha\in A}\subset F$ if $\varnothing\ne\left(f_{\alpha}\right)_{\alpha\in A}^{\searrow}\le f\le \left(f_{\alpha}\right)_{\alpha\in A}^{\nearrow}\ne\varnothing$, and a \emph{weak interelement} if $e\vee f\wedge h$ is an interelement of $\left(e\vee f_{\alpha}\wedge h\right)_{\alpha\in A}$, for any $e\le h$. It is easy to see that $f$ is an interelement of the net iff it is a weak interelement and the net is eventually order bounded. Clearly, $f$ is a (weak) interelement of $\left(f_{\alpha}\right)_{\alpha\in A}$ iff $0_{F}$ is a (weak) interelement of $\left(f_{\alpha}-f\right)_{\alpha\in A}$ and iff $0_{F}$ is a (weak) interelement of $\left(f-f_{\alpha}\right)_{\alpha\in A}$. According to part (i) of Proposition \ref{ocs}, $f_{\alpha}\xrightarrow[]{o}f$ if and only if $f$ is the unique interelement of $\left(f_{\alpha}\right)_{\alpha\in A}$. It turns out that a similar fact holds for the uo convergence.

\begin{theorem}[Papangelou]\label{uo2}Let $\left(f_{\alpha}\right)_{\alpha\in A}\subset F$ and $f\in F$. Then:
\item[(i)] $f$ is a weak interelement if and only if $\bigvee\limits_{\alpha\ge\alpha_{0}} \left(f\wedge f_{\alpha}\right)=f=\bigwedge\limits_{\alpha\ge\alpha_{0}} \left(f\vee f_{\alpha}\right)$, for every $\alpha\ge\alpha_{0}$.
\item[(ii)] $f_{\alpha}\xrightarrow[]{uo}f$  if and only if $f$ is the unique weak interelement of $\left(f_{\alpha}\right)_{\alpha\in A}$.
\end{theorem}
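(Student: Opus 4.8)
The plan is to reduce everything to the order-convergence facts already established: part (i) to Proposition \ref{ocs}(ii), and part (ii) to Proposition \ref{unbaa} together with the characterisation of the order limit as the unique interelement. Throughout I use the infinite distributive laws $\bigvee_i(x_i\wedge a)=\left(\bigvee_i x_i\right)\wedge a$ and $\bigwedge_i(x_i\vee a)=\left(\bigwedge_i x_i\right)\vee a$ (valid whenever the displayed extremum exists), together with $a\vee(b\wedge c)=(a\vee b)\wedge(a\vee c)$.

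For part (i), recall that by Proposition \ref{ocs}(ii) the condition ``$\bigvee_{\alpha\ge\alpha_0}(f\wedge f_\alpha)=f$ for every $\alpha_0$'' is exactly $f\le(f_\alpha)^{\nearrow}$, and dually ``$\bigwedge_{\alpha\ge\alpha_0}(f\vee f_\alpha)=f$ for every $\alpha_0$'' is exactly $f\ge(f_\alpha)^{\searrow}$. Fixing $e\le h$ and using $(e\vee f\wedge h)\wedge(e\vee f_\alpha\wedge h)=e\vee((f\wedge f_\alpha)\wedge h)$, I get $\bigvee_{\alpha\ge\alpha_0}[(e\vee f\wedge h)\wedge(e\vee f_\alpha\wedge h)]=e\vee\left(\left(\bigvee_{\alpha\ge\alpha_0}(f\wedge f_\alpha)\right)\wedge h\right)$, which under the displayed condition equals $e\vee f\wedge h$; so by Proposition \ref{ocs}(ii) (and its dual) $e\vee f\wedge h$ is both $\le(e\vee f_\alpha\wedge h)^{\nearrow}$ and $\ge(e\vee f_\alpha\wedge h)^{\searrow}$, hence an interelement of the order-bounded truncated net, giving that $f$ is a weak interelement. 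Conversely, assuming $f$ a weak interelement, I take $h=f$ and an arbitrary $e\le f$: then $f$ is an interelement of $(e\vee(f\wedge f_\alpha))_\alpha$, so $\bigvee_{\alpha\ge\alpha_0}(e\vee(f\wedge f_\alpha))=f$. To strip off $e$, let $u$ be any upper bound of $\{f\wedge f_\alpha\}_{\alpha\ge\alpha_0}$ and put $e=u\wedge f\le f$; then $e\vee(f\wedge f_\alpha)=u\wedge f$ for all $\alpha\ge\alpha_0$, forcing $u\wedge f=f$, i.e.\ $u\ge f$. Thus $\bigvee_{\alpha\ge\alpha_0}(f\wedge f_\alpha)=f$, and the symmetric choice $e=f$, $h\ge f$ gives the other identity.

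For part (ii), Proposition \ref{unbaa} gives $f_\alpha\xrightarrow[]{uo}f$ iff $e\vee f_\alpha\wedge h\xrightarrow[]{o}e\vee f\wedge h$ for every $e\le h$, and an order-bounded net order-converges precisely to its \emph{unique} interelement (Proposition \ref{ocs}(i)); hence $f_\alpha\xrightarrow[]{uo}f$ iff for every $e\le h$ the element $e\vee f\wedge h$ is the unique interelement of $(e\vee f_\alpha\wedge h)_\alpha$. The forward implication is then immediate: each $e\vee f\wedge h$ is an interelement, so $f$ is a weak interelement, and if $f'$ is another weak interelement then for every $e\le h$ the two interelements $e\vee f\wedge h$ and $e\vee f'\wedge h$ of the same truncated net coincide; taking $e=f\wedge f'$ and $h=f\vee f'$ yields $f=f'$. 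The substance is the converse. Suppose $f$ is the unique weak interelement but some $(e_0\vee f_\alpha\wedge h_0)_\alpha$ has an interelement $g\ne e_0\vee f\wedge h_0$; since the interelements of an order-bounded net form an interval-sublattice, I may assume (by the $\vee/\wedge$ symmetry of part (i)) that $g>e_0\vee f\wedge h_0$ with $g\in[e_0,h_0]$, so $g\not\le f$. The natural candidate for a second weak interelement is $f'=f\vee g\ne f$: part (i) gives $\bigwedge_{\alpha\ge\alpha_0}(f'\vee f_\alpha)=g\vee\bigwedge_{\alpha\ge\alpha_0}(f\vee f_\alpha)=g\vee f=f'$ at once.

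The main obstacle is the remaining identity $\bigvee_{\alpha\ge\alpha_0}(f'\wedge f_\alpha)=f'$. Here $\bigvee_{\alpha\ge\alpha_0}(f'\wedge f_\alpha)=f\vee\bigvee_{\alpha\ge\alpha_0}(g\wedge f_\alpha)$, yet the only available relation, $g\le(e_0\vee f_\alpha\wedge h_0)^{\nearrow}$, merely yields $e_0\vee\bigvee_{\alpha\ge\alpha_0}(g\wedge f_\alpha)=g$, and the ``floor'' $e_0$ obstructs passing to $f\vee\bigvee_{\alpha\ge\alpha_0}(g\wedge f_\alpha)=f\vee g$. Indeed $g$ is only a single-window object, and checking $f\vee g$ at another window $(e,h)$ demands $g\wedge h\le(e\vee f_\alpha\wedge h)^{\nearrow}$, which is not supplied by the hypothesis at $(e_0,h_0)$; so $f\vee g$ need not be a weak interelement. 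I therefore expect the converse to be run in the order completion $F^{\delta}$: as $F$ is order dense, hence regular, in $F^{\delta}$, Theorem \ref{gt1} makes the inclusion a uo-embedding, so $f_\alpha\xrightarrow[]{uo}f$ holds in $F$ iff in $F^{\delta}$, while weak interelements are detected window-by-window. In $F^{\delta}$ every truncated net has a genuine order limsup, so from a failing window one assembles a true global weak interelement $\tilde f\ne f$ in $F^{\delta}$ (limsup on the offending window, the $f$-value elsewhere), with extremality pinned down via Lemma \ref{arch2}. The delicate final point, where most of the work lies, is to use order density of $F$ in $F^{\delta}$ to produce an element of $F$ strictly between $f$ and $\tilde f$ that still satisfies the weak-interelement identities of part (i), contradicting uniqueness in $F$.
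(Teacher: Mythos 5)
Parts (i) and the necessity half of (ii) of your proposal are correct and essentially coincide with the paper's own proof: part (i) rests on Proposition \ref{ocs}(ii) together with the distributive window identities, and the forward half of (ii) on Proposition \ref{unbaa} plus the fact that an order-bounded net order-converges exactly to its unique interelement. Your window-stripping choice $e=u\wedge f$, $h=f$ is a harmless dual variant of the paper's window $\left(f,\,f\vee e\right)$.

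The genuine gap is the sufficiency half of (ii) --- the substance of the theorem --- which you do not prove: you explicitly leave the key steps open (``one assembles a true global weak interelement'', ``the delicate final point, where most of the work lies''). Moreover, passing to $F^{\delta}$ does not remove the obstacle you yourself identified: the limsup of the offending truncated net is again a single-window object, and ``limsup on the offending window, the $f$-value elsewhere'' is not a construction of an element of $F^{\delta}$, so the same patching problem recurs there. (Incidentally, the step you call delicate is the easy one: weak interelements form an order-convex sublattice, and by part (i) and regularity the weak-interelement property transfers between $F$ and $F^{\delta}$ for elements of $F$; the hard step is precisely the one you postponed.) The paper avoids patching across windows entirely, and the missing idea is to invoke Theorem \ref{uo}: translate so that $f=0_{F}$, so that by part (i) $\bigwedge_{\alpha\ge\alpha_{0}}f_{\alpha}^{+}=\bigwedge_{\alpha\ge\alpha_{0}}f_{\alpha}^{-}=0_{F}$ for every $\alpha_{0}$, and suppose that $f_{\alpha}^{+}$ does not uo-converge to $0_{F}$. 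Condition (ii) of Theorem \ref{uo} then yields a single $h>0_{F}$ with $\bigvee_{\alpha\ge\alpha_{0}}\left(h\wedge f_{\alpha}^{+}\right)=h$ for every $\alpha_{0}$, and this $h$ is checked to be a weak interelement of the original net directly through the part (i) identities: $\bigwedge_{\alpha\ge\alpha_{0}}\left(h\vee f_{\alpha}\right)=h\vee\bigwedge_{\alpha\ge\alpha_{0}}f_{\alpha}^{+}=h$, while $\bigvee_{\alpha\ge\alpha_{0}}\left(h\wedge f_{\alpha}\right)=\bigvee_{\alpha\ge\alpha_{0}}\left(h\wedge f_{\alpha}\right)^{+}-\bigwedge_{\alpha\ge\alpha_{0}}\left(h\wedge f_{\alpha}\right)^{-}=\bigvee_{\alpha\ge\alpha_{0}}\left(h\wedge f_{\alpha}^{+}\right)-\bigwedge_{\alpha\ge\alpha_{0}}f_{\alpha}^{-}=h$. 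Since $h\ne 0_{F}$, uniqueness is contradicted, with no order completion and no assembly of elements across windows: the witness supplied by Theorem \ref{uo} passes all windows automatically. This use of the paper's earlier characterization of uo-null nets is exactly what your proposal lacks.
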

\begin{proof}
(i): Sufficiency: It is easy to see that if $f=\bigvee\limits_{\alpha\ge\alpha_{0}} \left(f\wedge f_{\alpha}\right)$, then $e\vee f\wedge h=\bigvee\limits_{\alpha\ge\alpha_{0}} \left(\left(e\vee f\wedge h\right)\wedge\left(e\vee f_{\alpha}\wedge h\right)\right)$, for $e\le h$, from where according to part (ii) of Proposition \ref{ocs}, $e\vee f\wedge h\le \left(e\vee f_{\alpha}\wedge h\right)_{\alpha\in A}^{\nearrow}$. The case of the eventual lower bounds is analogous, and so $e\vee f\wedge h$ is an interelement of $\left(e\vee f_{\alpha}\wedge h\right)_{\alpha\in A}$.\medskip

Necessity: Fix $\alpha_{0}$ and assume that $e\le f\vee f_{\alpha}$, for every $\alpha\ge\alpha_{0}$. Then, $e$ is an eventual lower bound of the net $\left(f\vee f_{\alpha}\wedge \left(f\vee e\right)\right)_{\alpha\in A}$. Since $f$ is an interelement of this net, it follows that $f\ge e$, and so $f=\bigwedge\limits_{\alpha\ge\alpha_{0}} \left(f\vee f_{\alpha}\right)$. The other equality is proven analogously.

(ii): Necessity: If $f_{\alpha}\xrightarrow[]{uo}f$, then according to part (iv) of Proposition \ref{unba}, for every $e\le h$, we have that $e\vee f_{\alpha}\wedge h\xrightarrow[]{o}e\vee f\wedge h$; then $e\vee f\wedge h$ is the only interelement of $\left(e\vee f_{\alpha}\wedge h\right)_{\alpha\in A}$. Hence, in particular $f$ is a weak interelement, and for any weak interelement $g$ taking $e=g\wedge f$ and $h=g\vee f$ yields $g=f$.\medskip

Sufficiency: It is enough to assume that $f=0_{F}$. We will use the characterization of the weak interelements given in (i). In particular, $\bigwedge\limits_{\alpha\ge\alpha_{0}}f^{+}_{\alpha}=0_{F}=\bigwedge\limits_{\alpha\ge\alpha_{0}}f^{-}_{\alpha}$, for every $\alpha_{0}$. Let $h\ge 0_{F}$ and assume that $\bigvee\limits_{\alpha\ge\alpha_{0}}h\wedge f_{\alpha}^{+}=h$, for every $\alpha_{0}$. For every $\alpha_{0}$ we have $\bigwedge\limits_{\alpha\ge\alpha_{0}}\left(h\vee f_{\alpha}\right)=\bigwedge\limits_{\alpha\ge\alpha_{0}}\left(h\vee f_{\alpha}^{+}\right)=h\vee \bigwedge\limits_{\alpha\ge\alpha_{0}}f_{\alpha}^{+}=h$, and $$\bigvee\limits_{\alpha\ge\alpha_{0}}\left(h\wedge f_{\alpha}\right)=\bigvee\limits_{\alpha\ge\alpha_{0}}\left(h\wedge f_{\alpha}\right)^{+}-\bigwedge\limits_{\alpha\ge\alpha_{0}}\left(h\wedge f_{\alpha}\right)^{-}=\bigvee\limits_{\alpha\ge\alpha_{0}}h\wedge f_{\alpha}^{+}-\bigwedge\limits_{\alpha\ge\alpha_{0}}f^{-}_{\alpha}=h.$$ Hence, $h$ is a weak interelement of $\left( f_{\alpha}\right)_{\alpha\in A}$, and so $h=0_{F}$. Therefore, the condition (ii) of Theorem \ref{uo} is satisfied, and so $f_{\alpha}^{+}\xrightarrow[]{uo}0_{F}$. Analogously, $f_{\alpha}^{-}\xrightarrow[]{uo}0_{F}$, thus $f_{\alpha}\xrightarrow[]{uo}0_{F}$.
\end{proof}

\section{Uo convergence in spaces of continuous functions}\label{uocsfs}

An application of the new approach to the uo convergence is the characterization of this convergence in the vector lattices of continuous functions, first proven in \cite{et}.

\begin{theorem}\label{couo1}Let $X$ be a Tychonoff space. A net $\left(f_{\alpha}\right)_{\alpha\in A}$ in $\Co\left(X\right)_{+}$ is uo-null if and only if for every open nonempty $U\subset X$ and $\varepsilon>0$ there is an open nonempty $V\subset U$ and $\alpha_{0}$ such that $f_{\alpha}\left(x\right)\le\varepsilon$, for every $x\in V$ and $\alpha\ge\alpha_{0}$.
\end{theorem}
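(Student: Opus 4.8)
The plan is to verify the concrete pointwise condition directly against characterization (iii) of Theorem \ref{uo}, after translating the lattice notions of disjointness and positivity into the language of cozero sets. First I would record the dictionary: for $p,q\ge 0_{F}$ in $\Co\left(X\right)$ (or $\Co^{\8}\left(X\right)$) one has $p\bot q$ precisely when the open sets $\left\{p>0\right\}$ and $\left\{q>0\right\}$ are disjoint, while $e>0_{F}$ means exactly that $\left\{e>0\right\}$ is nonempty and open. Under this dictionary the disjointness $\left(f_{\alpha}-h\right)^{+}\bot e$ says nothing more than that $f_{\alpha}\le h$ holds throughout $\left\{e>0\right\}$, which is what makes condition (iii) amenable to a pointwise reading.

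For the forward implication I would assume $\left(f_{\alpha}\right)_{\alpha\in A}$ is uo-null and fix a nonempty open $U$ together with $\varepsilon>0$. Choosing $x_{0}\in U$ and invoking complete regularity of the Tychonoff space $X$, there is a continuous $g:X\to\left[0,1\right]$ with $g\left(x_{0}\right)=1$ and $g=0$ off $U$; put $h=\varepsilon g$, so that $h>0_{F}$, $h\le\varepsilon$, and $\left\{h>0\right\}\subset U$. Applying Theorem \ref{uo}(iii) to this $h$ yields $e\in\left(0_{F},h\right]$ and $\alpha_{0}$ with $\left(f_{\alpha}-h\right)^{+}\bot e$ for every $\alpha\ge\alpha_{0}$. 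Setting $V=\left\{e>0\right\}$, which is nonempty, open, and contained in $\left\{h>0\right\}\subset U$ because $e\le h$, the disjointness translates into $f_{\alpha}\le h\le\varepsilon$ on $V$ for all $\alpha\ge\alpha_{0}$, which is exactly the asserted condition.

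For the converse I would assume the concrete condition and manufacture the data of Theorem \ref{uo}(iii) from a fixed $h>0_{F}$. Since $h\ge 0_{F}$, $h\ne 0_{F}$, and (in the $\Co^{\8}$ case) $h$ is finite off a nowhere dense set, I can pick a point at which $h$ is finite and positive and choose $\delta>0$ below its value, so $U=\left\{h>\delta\right\}$ is nonempty and open. The hypothesis applied to $U$ and $\varepsilon=\delta$ gives a nonempty open $V\subset U$ and $\alpha_{0}$ with $f_{\alpha}\le\delta$ on $V$ for $\alpha\ge\alpha_{0}$; since $h>\delta$ on $V\subset U$ this forces $f_{\alpha}\le h$ on $V$. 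Using complete regularity once more I would take a continuous $g:X\to\left[0,1\right]$ positive at some point of $V$ and vanishing off $V$, and set $e=\delta g$: then $0_{F}<e\le h$ (on $V$ we have $e\le\delta<h$, off $V$ we have $e=0_{F}$) and $\left\{e>0\right\}\subset V$, whence $\left(f_{\alpha}-h\right)^{+}\bot e$ for $\alpha\ge\alpha_{0}$. Theorem \ref{uo}(iii) then delivers $f_{\alpha}\xrightarrow[]{uo}0_{F}$.

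The routine part is the repeated use of complete regularity to produce test functions with prescribed cozero sets and uniform bounds. The step requiring the most care is the disjointness–cozero dictionary in $\Co^{\8}\left(X\right)$, where functions may assume the values $\pm\8$: one must check that $\left\{p>0\right\}$ is still open, that disjointness is still captured by disjointness of these sets, and—crucially—that in the forward direction the chain $f_{\alpha}\le h\le\varepsilon$ on $V$ already forces $f_{\alpha}$ to be finite and bounded by $\varepsilon$ there, so that the pointwise statement is literally valid at every $x\in V$. Because $X$ is extremally disconnected in the $\Co^{\8}$ case, the abundance of clopen sets renders these verifications straightforward, and both directions then close immediately through Theorem \ref{uo}.
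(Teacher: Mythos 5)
Your proof is correct and follows essentially the same route as the paper: both directions are reduced to condition (iii) of Theorem \ref{uo}, with complete regularity supplying test functions whose cozero sets realize the sets $U$ and $V$ (the paper takes $u\in\left(\0,\1_{U}\right]$, $h=\varepsilon u$ for necessity and $v\in\left(\0,\varepsilon\1_{V}\right]\subset\left(\0,u\right]$ for sufficiency, exactly matching your $h=\varepsilon g$ and $e=\delta g$). Your additional care about finiteness and the disjointness--cozero dictionary in $\Co^{\8}\left(X\right)$ is a point the paper leaves implicit, but it does not change the argument.
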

\begin{proof}
Necessity: Let $U\subset X$ be open and nonempty, and let $\varepsilon>0$. Take a continuous function $u\in\left(\0,\1_{U}\right]$ and using Theorem \ref{uo} find a continuous $v\in\left(\0,\varepsilon u\right]$ and $\alpha_{0}$ such that $\left(f_{\alpha}-\varepsilon u\right)^{+}\bot v$, for every $\alpha\ge\alpha_{0}$. Then, $\left(f_{\alpha}-\varepsilon u\right)^{+}$ vanishes on $\varnothing\ne V:=\supp v\subset U$, and so $f_{\alpha}\left(x\right)\le\varepsilon u\left(x\right)\le\varepsilon$, for every $x\in V$ and $\alpha\ge\alpha_{0}$.\medskip

Sufficiency: Let $u>\0$ be continuous and let open nonempty $U$ and $\varepsilon$ be such that $u\ge\varepsilon\1_{U}$. Let $V\subset U$ be open nonempty and such that $f_{\alpha}\left(x\right)\le\varepsilon$, for every $x\in V$ and $\alpha\ge\alpha_{0}$, for some $\alpha_{0}$. Let $v\in\left(\0,\varepsilon\1_{V}\right]\subset\left(\0,u\right]$ be continuous. It is easy to see that $\left(f_{\alpha}-u\right)^{+}$ vanishes on $V$, and so is disjoint with $v$, for every $\alpha\ge\alpha_{0}$. Hence, according to Theorem \ref{uo}, $f_{\alpha}\xrightarrow[]{uo}\0$.
\end{proof}

The following answers a question of Mark Roelands.

\begin{corollary}\label{couo3}
The correspondence $f\mapsto f^{-1}$ is uo-continuous on the set of invertible elements of $\Co\left(X\right)_{+}$.
\end{corollary}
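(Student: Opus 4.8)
The plan is to reduce the whole statement to the pointwise identity
$\left|f_{\alpha}^{-1}-f^{-1}\right|=\dfrac{\left|f-f_{\alpha}\right|}{f_{\alpha}f}$
together with the open-set criterion of Theorem \ref{couo1}. Since the uo convergence is locally solid and additive, the very definition used in Theorem \ref{locs} gives that $f_{\alpha}\xrightarrow[]{uo}f$ is the same as $g_{\alpha}:=\left|f_{\alpha}-f\right|\xrightarrow[]{uo}\0$, and likewise $f_{\alpha}^{-1}\xrightarrow[]{uo}f^{-1}$ is the same as $h_{\alpha}:=\left|f_{\alpha}^{-1}-f^{-1}\right|\xrightarrow[]{uo}\0$. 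Thus, assuming $\left(g_{\alpha}\right)$ is uo-null, it suffices to verify for $\left(h_{\alpha}\right)$ the criterion from Theorem \ref{couo1}: for every open nonempty $U$ and $\delta>0$ produce an open nonempty $V\subset U$ and $\alpha_{0}$ with $h_{\alpha}\left(x\right)\le\delta$ for every $x\in V$ and $\alpha\ge\alpha_{0}$.

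So fix an open nonempty $U$ and $\delta>0$. The one genuine point is to control the denominator $f_{\alpha}f$ from below, and I would handle it by first localizing to where $f$ is bounded away from $0$. In the $\Co^{\8}\left(X\right)$ case I first shrink $U$ to a nonempty open subset on which $f$ is finite and strictly positive; this is possible because invertibility of $f$ forces both $\left\{f=\8\right\}$ and $\left\{f=\0\right\}$ to be nowhere dense, so $f$ is finite and positive on a dense open set (in the $\Co\left(X\right)$ case $f$ is already continuous and strictly positive on all of $U$). Now pick $x_{0}\in U$, set $c:=f\left(x_{0}\right)>0$, and use continuity of $f$ to find a nonempty open $W\subset U$ with $f\left(x\right)>\frac{c}{2}$ for all $x\in W$. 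Then set $\varepsilon:=\min\left\{\frac{c}{4},\frac{\delta c^{2}}{8}\right\}$ and apply Theorem \ref{couo1} to the uo-null net $\left(g_{\alpha}\right)$ and the set $W$, obtaining a nonempty open $V\subset W\subset U$ and $\alpha_{0}$ with $g_{\alpha}\left(x\right)\le\varepsilon$ for every $x\in V$ and $\alpha\ge\alpha_{0}$.

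It remains to read off the estimate. For $x\in V$ and $\alpha\ge\alpha_{0}$, from $g_{\alpha}\left(x\right)\le\varepsilon\le\frac{c}{4}$ and $f\left(x\right)>\frac{c}{2}$ we get $f_{\alpha}\left(x\right)\ge f\left(x\right)-g_{\alpha}\left(x\right)>\frac{c}{4}$; in particular $f_{\alpha}$ is finite there and $f_{\alpha}\left(x\right)f\left(x\right)>\frac{c^{2}}{8}$. Hence $h_{\alpha}\left(x\right)=\dfrac{g_{\alpha}\left(x\right)}{f_{\alpha}\left(x\right)f\left(x\right)}<\dfrac{8\varepsilon}{c^{2}}\le\delta$, which is exactly the required inequality. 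By Theorem \ref{couo1} the net $\left(h_{\alpha}\right)$ is uo-null, so $f_{\alpha}^{-1}\xrightarrow[]{uo}f^{-1}$, proving uo-continuity of the inversion map. I expect the only obstacle to be the lower bound on the denominator, and it is resolved precisely by the localization above: smallness of $g_{\alpha}$ on $V$ simultaneously gives the upper bound making $h_{\alpha}$ small and the lower bound $f_{\alpha}>\frac{c}{4}$ keeping the denominator away from $0$.
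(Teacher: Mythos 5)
Your proof is correct and follows essentially the same route as the paper's: both reduce to the criterion of Theorem \ref{couo1}, localize to an open set where $f$ (hence eventually $f_{\alpha}$) is bounded below, and conclude via the pointwise identity $\left|f_{\alpha}^{-1}-f^{-1}\right|=\left|f_{\alpha}-f\right|/\left(f_{\alpha}f\right)$, differing only in the bookkeeping of constants. Your explicit treatment of the $\Co^{\8}\left(X\right)$ case (shrinking $U$ to where $f$ is finite and positive, using that $\left\{f=\8\right\}$ and $\left\{f=\0\right\}$ are nowhere dense) is a welcome detail that the paper leaves implicit.
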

\begin{proof}
Denote the set in question by $\Co\left(X\right)_{++}$. Let $\left(f_{\alpha}\right)_{\alpha\in A}$ and $f$ be in $\Co\left(X\right)_{++}$. Assume that $f_{\alpha}\xrightarrow[]{uo}f$; we need to show that $f_{\alpha}^{-1}\xrightarrow[]{uo}f^{-1}$. Let $U\subset X$ be open and nonempty and let $\varepsilon>0$. Since $f$ is invertible there is an open $\varnothing\ne V\subset U$ and $r>0$ such that $f\ge r\1_{V}$. Let $\delta =\frac{r^{2}\varepsilon}{1+r\epsilon}$. There is an open $\varnothing\ne W\subset V$ and $\alpha_{0}$ such that $\left|f_{\alpha}\left(x\right)-f\left(x\right)\right|\le\delta$, for every $x\in W$ and $\alpha\ge\alpha_{0}$. We then have on $W$ that $\left|f_{\alpha}^{-1}\left(x\right)-f^{-1}\left(x\right)\right|=\frac{\left|f_{\alpha}\left(x\right)-f\left(x\right)\right|}{\left|f_{\alpha}\left(x\right)\right|\left|f\left(x\right)\right|}\le \frac{\delta}{r\left(r-\delta\right)}=\varepsilon$. Since $U$ and $\varepsilon$ were chosen arbitrarily, $f_{\alpha}^{-1}\xrightarrow[]{uo}f^{-1}$.
\end{proof}

\begin{remark}
Recall that if $X$ is extremally disconnected, the  continuous $\left[-\8,\8\right]$-valued functions which are finite apart of a nowhere dense set form a vector lattice, which we denote $\Co^{\8}\left(X\right)$. For more information about this object see \cite[Section 7.2]{ab0}. Both Theorem \ref{couo1} and Corollary \ref{couo3} are still valid for $\Co^{\8}\left(X\right)$ with the same proofs going through. Similarly, for any continuous function $\varphi:D\to \left[-\8,\8\right]$, where $D\subset\left[-\8,\8\right]$, the correspondence $f\mapsto \varphi\circ f$ is uo-continuous on the appropriately chosen subset of $\Co\left(X\right)$ or $\Co^{\8}\left(X\right)$.
\qed\end{remark}

We will call a topological space $X$ \emph{almost locally compact} if the set of all points of $X$ that have a compact neighborhood is dense.

\begin{theorem}\label{couo2}For a Tychonoff space $X$ TFAE:
\item[(i)] $X$ is almost locally compact;
\item[(ii)] $X$ contains a dense locally compact subspace;
\item[(iii)] Every open nonempty $U\subset X$ contains an open nonempty relatively compact subset;
\item[(iv)] The compact open convergence on $\Co\left(X\right)$ is stronger than the uo convergence.
\end{theorem}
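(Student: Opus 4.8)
The plan is to prove the chain $(i)\Leftrightarrow(iii)$, $(ii)\Leftrightarrow(iii)$, and $(iii)\Leftrightarrow(iv)$, treating the three purely topological conditions as a warm-up and reserving the real work for the link with convergence. For $(i)\Leftrightarrow(iii)$ I would argue directly: if $(iii)$ holds, any relatively compact open $V\subseteq U$ has compact closure $\overline{V}$, which is a compact neighborhood of each of its points, so the set of points with compact neighborhoods meets every open $U$ and is dense; conversely, given $(i)$ and an open nonempty $U$, I pick $x\in U$ with a compact neighborhood $K$ and use regularity of $X$ (Tychonoff $\Rightarrow$ regular) to shrink to an open $V$ with $x\in V\subseteq\overline{V}\subseteq \Int K\cap U$, so that $\overline{V}$ is a closed subset of the compact $K$, hence compact.

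For $(ii)\Leftrightarrow(iii)$: from $(iii)$ the union $D$ of all relatively compact open sets is open and, by $(iii)$, dense, and the same regularity-shrinking shows each point of $D$ has a compact neighborhood lying inside $D$, so $D$ is locally compact. For the converse I would fix a dense locally compact $D$ and an open nonempty $U$, choose $x\in U\cap D$ with a compact neighborhood $N$ in $D$ arising from an ambient open $G$, and set $W=G\cap U$; density of $D$ gives $\overline{W}=\overline{W\cap D}\subseteq N$, so $W$ is a relatively compact open subset of $U$.

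The substantive equivalence is $(iii)\Leftrightarrow(iv)$. By the remark following Proposition \ref{contop} it suffices to test the ordering of the two (locally solid linear) convergences on positive nets tending to $0$, so $(iv)$ says: every net $0\le f_{\alpha}\to 0$ in the compact-open topology is uo-null. For $(iii)\Rightarrow(iv)$ I would invoke the criterion of Theorem \ref{couo1}: given open nonempty $U$ and $\varepsilon>0$, use $(iii)$ to find a relatively compact open $V\subseteq U$; since compact-open convergence is uniform convergence on the compact set $\overline{V}$, the supremum of $f_{\alpha}$ over $\overline{V}$, hence over $V$, eventually drops below $\varepsilon$, which is exactly the condition in Theorem \ref{couo1}, so $f_{\alpha}\xrightarrow[]{uo}0$.

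The main obstacle, and the only genuine construction, is $(iv)\Rightarrow(iii)$, which I would prove contrapositively. Assume $(iii)$ fails for some open nonempty $U$. The crucial observation is that then no compact $K$ can contain a nonempty open subset of $U$ (its closure would be a relatively compact open subset of $U$), so $\Int K\cap U=\varnothing$ and consequently $V\setminus K\ne\varnothing$ for every open nonempty $V\subseteq U$. I would then index a net by pairs $(K,V)$, with $K\subseteq X$ compact and $V\subseteq U$ open nonempty, preordered by the first coordinate alone: $(K_{1},V_{1})\le(K_{2},V_{2})$ iff $K_{1}\subseteq K_{2}$; this is directed since $K_{1}\cup K_{2}$ is compact. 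For each pair pick $x_{K,V}\in V\setminus K$ and, using complete regularity, a continuous $f_{(K,V)}:X\to[0,1]$ that vanishes on $K$ and equals $1$ at $x_{K,V}$. Vanishing on $K$ forces uniform convergence to $0$ on every compact set once the index dominates it in the first coordinate, giving compact-open convergence; yet for any fixed open nonempty $V\subseteq U$ and any index $(K_{0},V_{0})$, the index $(K_{0},V)\ge(K_{0},V_{0})$ satisfies $f_{(K_{0},V)}(x_{K_{0},V})=1$ at a point of $V$, so the Theorem \ref{couo1} criterion fails with $\varepsilon=\tfrac12$ and the net is not uo-null. Hence compact-open convergence is not stronger than uo. The subtlety to watch is that ordering the index set by the compact coordinate alone (ignoring $V$) is precisely what rescues directedness: ordering by reverse inclusion of $V$ would fail, since disjoint open subsets of $U$ admit no common refinement.
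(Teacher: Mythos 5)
Your proposal is correct and follows essentially the same route as the paper: the purely topological equivalences are routine, and the substantive implication (iv)$\Rightarrow$(iii) is established by the same construction --- a net indexed by pairs whose preorder is governed by the compact coordinate alone (exactly the directedness trick the paper uses), with each function vanishing on the compact set yet equal to $1$ at a point of the test open set, so that the criterion of Theorem \ref{couo1} fails. The only cosmetic differences are that you prove (i)$\Leftrightarrow$(ii) directly via (iii) instead of citing \cite[3.15 (d)]{gj}, and your second index coordinate is an open $V\subseteq U$ with a chosen witness point rather than the paper's open neighborhood $O$ of $K$ with $f_{K,O}=1$ outside $O$.
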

\begin{proof}
(i)$\Rightarrow$(ii) is trivial, while the converse follows from the fact that a dense locally compact subspace is always open (see \cite[3.15 (d)]{gj}). (i)$\Leftrightarrow$(iii) is easy to see, and (iii)$\Rightarrow$(iv) follows from Theorem \ref{couo1}.\medskip

(iv)$\Rightarrow$(iii): Let $U$ be an open nonempty subset of $X$ which contains no nonempty relatively compact open sets. Consider the net $\left(f_{K,O}\right)_{K,O}$, where $K\subset X$ is compact, $O$ is an open neighborhood of $K$, the net is ordered by inclusion in the first variable, and $f_{K,O}$ vanishes on $K$ and is equal to $1$ outside of $O$. Clearly, $\left(f_{K,O}\right)_{K,O}$ converges to $\0$ in the compact-open topology. Let us show that it is not uo-null. According to Theorem \ref{couo1}, it is enough to show that for any open nonempty $V\subset U$ and any index $\left(K,O\right)$ there is an index $\left(K,W\right)$ and $x\in V$ such that  $f_{K,W}\left(x\right)=1$. In order to construct these $x$ and $W$ observe that our assumption about $U$ implies that $V$ is not relatively compact, and so $V\not\subset K$. Take $x\in V\backslash K$ and let $W$ be an open neighborhood of $K$ such that $x\notin \overline{W}$. Then, $f_{K,W}\left(x\right)=1$, as required.
\end{proof}

\section{Order continuity of positive operators}\label{ucpos}

In this section $T:F\to E$ is a positive operator between Archimedean vector lattices endowed with locally solid Hausdorff additive convergences. We start with a generalization of \cite[Proposition 3.1]{erz}.

\begin{proposition}\label{loc0}
Let  $H=\left\{h\in F:~ \left.T\right|_{F_{\left|h\right|}}\mbox{ is order continuous}\right\}$. Then $H$ is an ideal and $\left.T\right|_{H}$ is order continuous. Moreover, if $T$ is continuous, then $H$ is closed.
\end{proposition}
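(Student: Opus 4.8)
The plan is to prove the three assertions in turn, using Corollary~\ref{idcont} throughout to reduce order continuity to the behaviour on nets decreasing to the origin.

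\emph{$H$ is an ideal.} For solidity, let $h\in H$ and $|g|\le|h|$. Then $F_{|g|}$ is an ideal of $F_{|h|}$, hence a regular sublattice, so by Theorem~\ref{gt1} the inclusion $F_{|g|}\hookrightarrow F_{|h|}$ is order continuous; composing it with the order continuous $\left.T\right|_{F_{|h|}}$ shows that $\left.T\right|_{F_{|g|}}$ is order continuous, i.e. $g\in H$. For $H_{+}+H_{+}\subset H$, take $g,h\in H_{+}$ and set $e=g+h$, so $F_{e}=F_{g+h}\supset F_{g},F_{h}$. Given a net $(f_{\alpha})\subset F_{e}$ with $f_{\alpha}\downarrow 0_{F}$, pass to a tail on which $f_{\alpha}\le\lambda e$ for some $\lambda\ge 0$, and write $f_{\alpha}=f_{\alpha}\wedge\lambda g+(f_{\alpha}-\lambda g)^{+}$. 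Here $f_{\alpha}\wedge\lambda g\in F_{g}$ decreases to $0_{F}$, while $0_{F}\le(f_{\alpha}-\lambda g)^{+}\le f_{\alpha}$ (so it decreases to $0_{F}$) and $(f_{\alpha}-\lambda g)^{+}\le\lambda h$ (so it lies in $F_{h}$). By Corollary~\ref{idcont} applied to $\left.T\right|_{F_{g}}$ and $\left.T\right|_{F_{h}}$ both images decrease to $0_{E}$, and since the sum of two nets decreasing to $0_{E}$ over a common index set again decreases to $0_{E}$, we get $Tf_{\alpha}\downarrow 0_{E}$. Thus $\left.T\right|_{F_{e}}$ is order continuous and $e\in H$. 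Being solid with $H_{+}+H_{+}\subset H$, $H$ is an ideal.

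\emph{$\left.T\right|_{H}$ is order continuous.} Let $(f_{\alpha})\subset H$ with $f_{\alpha}\downarrow 0_{F}$ (infima in $H$ and in $F$ coincide, $H$ being regular). Fixing $\alpha_{0}$, the tail $\{f_{\alpha}\}_{\alpha\ge\alpha_{0}}$ lies in $F_{f_{\alpha_{0}}}$ with $f_{\alpha_{0}}\in H$, and $\left.T\right|_{F_{f_{\alpha_{0}}}}$ is order continuous, so $Tf_{\alpha}\downarrow 0_{E}$ along this tail; hence $\bigwedge_{\alpha}Tf_{\alpha}=0_{E}$ and Corollary~\ref{idcont} applies.

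\emph{Closedness when $T$ is continuous.} Since $H$ is an ideal, its adherence $\overline{H}^{1}$ is an ideal and $\overline{H}^{1}_{+}=\overline{H_{+}}^{1}$; as an ideal is recovered from its positive part via solidity, it suffices to prove $h\in H$ whenever $h\in F_{+}$ is the limit of a net $(h_{\beta})\subset H_{+}$, and replacing $h_{\beta}$ by $h_{\beta}\wedge h$ we may assume $0_{F}\le h_{\beta}\le h$. To check $h\in H$ we verify via Corollary~\ref{idcont} that $Tg_{\gamma}\downarrow 0_{E}$ for every net $(g_{\gamma})\subset F_{h}$ with $g_{\gamma}\downarrow 0_{F}$; pass to a tail with $g_{\gamma}\le\lambda h$. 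For each $\beta$ decompose $g_{\gamma}=g_{\gamma}\wedge\lambda h_{\beta}+(g_{\gamma}-\lambda h_{\beta})^{+}$, where $g_{\gamma}\wedge\lambda h_{\beta}\in F_{h_{\beta}}$ decreases in $\gamma$ to $0_{F}$ and $0_{F}\le(g_{\gamma}-\lambda h_{\beta})^{+}\le\lambda(h-h_{\beta})$. Let $c\in E$ be any lower bound of $\{Tg_{\gamma}\}$. Since $h_{\beta}\in H$, order continuity of $\left.T\right|_{F_{h_{\beta}}}$ gives $\bigwedge_{\gamma}T(g_{\gamma}\wedge\lambda h_{\beta})=0_{E}$, and combining this with $c\le Tg_{\gamma}\le T(g_{\gamma}\wedge\lambda h_{\beta})+\lambda T(h-h_{\beta})$ yields $c\le\lambda T(h-h_{\beta})$, whence $0_{E}\le c^{+}\le\lambda T(h-h_{\beta})$, for every $\beta$.

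The crucial step, and the expected main obstacle, is the passage from order inequalities to the conclusion $c\le 0_{E}$. Continuity of $T$ gives $T(h-h_{\beta})\to 0_{E}$ (as $h-h_{\beta}\to 0_{F}$), so local solidness forces $c^{+}\to 0_{E}$; since $c^{+}$ is a constant net this gives $c^{+}=0_{E}$, i.e. $c\le 0_{E}$. Consequently $\bigwedge_{\gamma}Tg_{\gamma}=0_{E}$ and $h\in H$, so $H=\overline{H}^{1}$ is closed. The delicate point is exactly the inference ``$c^{+}\to 0_{E}$ (with $c^{+}$ constant) $\Rightarrow c^{+}=0_{E}$'', which amounts to uniqueness of limits of constant nets, i.e. closedness of the positive cone $E_{+}$ (equivalently Hausdorffness of the convergence on $E$); this is where the external continuity of $T$ and local solidness are genuinely used together, and it is the only place where the topological result \cite[Proposition~3.1]{erz} does not transfer verbatim.
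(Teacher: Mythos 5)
Your proposal is correct, and on the closedness claim---the only part the paper proves itself rather than cites from \cite[Proposition 3.1]{erz}---it follows essentially the same route as the paper: the same splitting $g_{\gamma}=g_{\gamma}\wedge\lambda h_{\beta}+\left(g_{\gamma}-\lambda h_{\beta}\right)^{+}$ with $\left(g_{\gamma}-\lambda h_{\beta}\right)^{+}\le\lambda\left(h-h_{\beta}\right)$, order continuity of $\left.T\right|_{F_{h_{\beta}}}$ to eliminate the first term, and continuity of $T$ to dispose of the second (your self-contained proofs that $H$ is an ideal and that $\left.T\right|_{H}$ is order continuous, which the paper outsources to \cite{erz}, are also sound). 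Your closing inference (the constant net $c^{+}$ converges to $0_{E}$, hence $c^{+}=0_{E}$) is exactly the paper's appeal to closedness of $e+E_{+}$ in an equivalent form, and the separation property it requires (closedness of $E_{+}$, equivalently Hausdorffness of the convergence on $E$) is left just as implicit in the paper's own argument, so you are on equal footing there.
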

\begin{proof}
The fact that $H$ is an ideal on which $T$ is order continuous was proven in \cite[Proposition 3.1]{erz}. Assume that $T$ is continuous. Let $h\in \overline{H}_{+}^{1}$, and so there is a net $\left(h_{\alpha}\right)_{\alpha \in A}\subset \left[0_{F},h\right]$ such that $h_{\alpha}\to h$. Assume that $\left(f_{\gamma}\right)_{\gamma \in \Gamma}$ is such that $h\ge f_{\gamma}\downarrow 0_{F}$ and $0_{E}\le e\le Tf_{\gamma}$, for every $\gamma$. Then, $h_{\alpha}\ge f_{\gamma}\wedge h_{\alpha}\downarrow_{\gamma} 0_{F}$, for every $\alpha$, and $f_{\gamma}-f_{\gamma}\wedge h_{\alpha}=\left(f_{\gamma}-h_{\alpha}\right)^{+}\le h-h_{\alpha}$, for every $\gamma$ and $\alpha$. Hence, $$e\le Tf_{\gamma}= T\left(f_{\gamma}\wedge h_{\alpha}\right)+T\left(f_{\gamma}-f_{\gamma}\wedge h_{\alpha}\right)\le T\left(f_{\gamma}\wedge h_{\alpha}\right)+T\left(h-h_{\alpha}\right),$$ for every $\gamma$ and $\alpha$. Since $h_{\alpha}\in H$ and $f_{\gamma}\wedge h_{\alpha}\downarrow_{\gamma} 0_{F}$, we have $T\left(f_{\gamma}\wedge h_{\alpha}\right)\downarrow_{\gamma} 0_{F}$, from where $0_{E}\le e\le T\left(h-h_{\alpha}\right)$, for every $\alpha$. Since $T$ is continuous, $h_{\alpha}\to h$, and $e+E_{+}$ is closed, we conclude that $e=0_{E}$. Thus, $h\in H$.
\end{proof}

We will call $G\subset F$ \emph{majorizing} if $I\left(G\right)=F$ and \emph{topologically majorizing} if $\overline{I\left(G\right)}=F$. The following is a generalization of \cite[Corollary 3.2]{erz}.

\begin{proposition}\label{oc}For a positive continuous $T:F\to E$ the following conditions are equivalent:
\item[(i)] $T$ is order continuous;
\item[(ii)] There is a topologically majorizing $G\subset F_{+}$ such that $\left.T\right|_{F_{g}}$ is order continuous, for every $g\in G$;
\item[(iii)] There is a topologically dense ideal $H$ such that $\left.T\right|_{H}$ is order continuous;
\item[(iv)] There is an order dense and topologically majorizing sublattice $H$ such that $\left.T\right|_{H}$ is order continuous.
\end{proposition}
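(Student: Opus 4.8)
The plan is to handle the three ``ideal'' conditions (i)--(iii) together through Proposition \ref{loc0}, and then graft (iv) onto the chain. Several implications are immediate: taking $H=F$ yields both (i)$\Rightarrow$(iii) and (i)$\Rightarrow$(iv). For (iii)$\Rightarrow$(ii) I would set $G=H_{+}$; since $H$ is an ideal, $F_{g}\subset H$ for every $g\in H_{+}$, and being an ideal, $F_{g}$ is a regular sublattice of $H$, so Theorem \ref{gt1} makes the inclusion $F_{g}\hookrightarrow H$ order continuous, whence $\left.T\right|_{F_{g}}=\left.T\right|_{H}\circ\iota$ is order continuous. Moreover $I\left(H_{+}\right)=H$, so $\overline{I\left(G\right)}=\overline{H}=F$ and $G$ is topologically majorizing.

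The decisive step is (ii)$\Rightarrow$(i), and here Proposition \ref{loc0} does essentially all the work. Put $H_{0}=\left\{h\in F:~ \left.T\right|_{F_{\left|h\right|}}\mbox{ is order continuous}\right\}$. Condition (ii) says precisely that $G\subset H_{0}$. By Proposition \ref{loc0} the set $H_{0}$ is an ideal on which $T$ is order continuous, and since $T$ is continuous it is moreover closed. Therefore $I\left(G\right)\subset H_{0}$ and, by closedness, $F=\overline{I\left(G\right)}\subset H_{0}$; thus $H_{0}=F$ and $T=\left.T\right|_{H_{0}}$ is order continuous.

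It remains to prove (iv)$\Rightarrow$(ii), where I expect the real difficulty. Again I would take $G=H_{+}$ (topologically majorizing since $I\left(H_{+}\right)=I\left(H\right)$ has closure $F$), so the task reduces to the following sublemma: \emph{if $H$ is order dense in $F$ and $\left.T\right|_{H}$ is order continuous, then $\left.T\right|_{F_{g}}$ is order continuous for every $g\in H_{+}$.} The soft half is that $D:=H\cap F_{g}$ is an ideal of $H$, hence a regular sublattice, so $\left.T\right|_{D}$ is order continuous; moreover $D$ is order dense in $F_{g}$ and, crucially, contains $g$ itself. The substance is to propagate order continuity from the order dense $D$ up to the whole principal ideal $F_{g}$.

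To do this I would verify order continuity of $\left.T\right|_{F_{g}}$ through Corollary \ref{idcont}, i.e. on nets $0_{F}\le f_{\beta}\downarrow 0_{F}$ in $F_{g}$; after replacing $g$ by a multiple and passing to a tail I may assume $f_{\beta}\le g$. Given $0_{E}\le e\le Tf_{\beta}$ for all $\beta$, the goal is $e=0_{E}$. The key object is the directed set $\Lambda=\left\{\left(\beta,d\right):~ d\in D,~ 0_{F}\le d\le g-f_{\beta}\right\}$, ordered coordinatewise, which is directed because $D$ is a sublattice and $f_{\beta}$ is decreasing. Since $g\in D$, the net $\left(g-d\right)_{\left(\beta,d\right)\in\Lambda}\subset D$ is decreasing, and order density of $D$ together with $\bigvee_{\beta}\left(g-f_{\beta}\right)=g$ forces its infimum in $F$ to be $0_{F}$; as $0_{F}\in D$ and every lower bound in $D$ is one in $F$, this infimum is $0_{F}$ in $D$ as well, so $\left(g-d\right)\downarrow 0_{F}$ in $D$. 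Order continuity of $\left.T\right|_{D}$ (via Corollary \ref{idcont}) yields $T\left(g-d\right)\downarrow 0_{E}$; but $d\le g-f_{\beta}$ gives $T\left(g-d\right)\ge Tf_{\beta}\ge e$ for every index, so $0_{E}=\bigwedge T\left(g-d\right)\ge e$, whence $e=0_{E}$. The main obstacle is exactly this sublemma: the simultaneous handling of the decreasing net $f_{\beta}$ and the upward approximation of $g-f_{\beta}$ by $D$ through the single index set $\Lambda$, which is what lets the order continuity available only on $D$ be leveraged on all of $F_{g}$.
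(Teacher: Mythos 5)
Your proof is correct, and most of its skeleton coincides with the paper's: the implication (ii)$\Rightarrow$(i) via Proposition \ref{loc0} (closed ideal $H_{0}$ containing $G$, hence containing $\overline{I\left(G\right)}=F$) and the implication (iii)$\Rightarrow$(ii) via regularity of the ideal $F_{g}\subset H$ and Theorem \ref{gt1} are exactly the paper's steps. Where you genuinely diverge is the treatment of (iv). The paper proves (iv)$\Rightarrow$(iii) by passing to $I\left(H\right)$ and \emph{citing} \cite[Corollary 3.2]{erz}, namely the external result that order continuity of a positive operator propagates from an order dense majorizing sublattice of an Archimedean vector lattice; you instead prove (iv)$\Rightarrow$(ii) directly, and your sublemma is precisely a self-contained proof of the special case of that citation needed here: $D=H\cap F_{g}$ is an ideal of $H$ (hence $\left.T\right|_{D}$ is order continuous by Theorem \ref{gt1}), is order dense in $F_{g}$, and is majorizing there because $g\in D$. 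Your double-indexed net over $\Lambda$ is sound: $\Lambda$ is directed because $D$ is a sublattice and $\left(f_{\beta}\right)$ decreases; the computation of the infimum uses, implicitly but correctly, that $F_{g}$ is an ideal and hence regular in $F$, so that both $\bigwedge_{F}f_{\beta}=0_{F}$ and the suprema $\bigvee_{F}\left(D\cap\left[0_{F},g-f_{\beta}\right]\right)=g-f_{\beta}$ supplied by order density are valid in $F$; and Corollary \ref{idcont} applied to $\left.T\right|_{D}$ closes the argument (the initial reduction of replacing $g$ by a multiple and passing to a tail is harmless since $F_{\lambda g}=F_{g}$ and $\lambda g\in H_{+}$). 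The trade-off is clear: the paper's route is shorter but leans on \cite{erz}, while yours is self-contained and re-derives the needed propagation principle in the principal-ideal setting — essentially the same double-indexing technique the paper itself deploys in Proposition \ref{loc0} and Theorem \ref{main1}.
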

\begin{proof}
(ii)$\Rightarrow$(i) follows from Proposition \ref{loc0}. (i)$\Rightarrow$(iv) is trivial.\medskip

(iv)$\Rightarrow$(iii): If $H$ is order dense and topologically majorizing, then it is order dense and majorizing in $I\left(H\right)$, and $I\left(H\right)$ is a topologically dense ideal. Since order continuity of a positive operator can be propagated from an order dense majorizing sublattice of an Archimedean vector lattice (see \cite[Corollary 3.2]{erz}), we conclude that $\left.T\right|_{I\left(H\right)}$ is order continuous.\medskip

(iii)$\Rightarrow$(ii): If $H$ is a topologically dense ideal and $\left.T\right|_{H}$ is order continuous, then since $F_{h}$ is a regular sublattice of $H$, it follows that $\left.T\right|_{F_{h}}$ is order continuous, for every $h\in H_{+}$, which is a topologically majorizing set.
\end{proof}

\section{The closure of a regular sublattice}\label{crss}

Throughout this section $F$ is again an Archimedean vector lattice. The following result refines \cite[Corollary 2.13]{gtx} and \cite[Theorem 2.13]{gl}.

\begin{theorem}\label{rod}
If $E$ is a regular sublattice of an Archimedean vector lattice $F$, its order adherence is the largest regular sublattice of $F$ in which $E$ is order dense.
\end{theorem}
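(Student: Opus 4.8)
The plan is to establish four facts about the order adherence $G:=\overline{E}^{1}$: that it is a sublattice, that $E$ is order dense in it, that it is regular, and that it contains every regular sublattice in which $E$ is order dense; together these say precisely that $G$ is the largest such sublattice. That $G$ is a sublattice is already recorded in Section \ref{lscs} (the adherence of a sublattice is a sublattice). The maximality is the easy half: if $G'$ is regular with $E\subseteq G'$ and $E$ order dense in $G'$, then for $g\in G'_{+}$ the upward directed net $E\cap[0_{F},g]$ increases to $g$ in $G'$, hence order converges to $g$ in $G'$; being order bounded in $G'$, regularity of $G'$ (Corollary \ref{idcont}) makes it order converge to $g$ in $F$ as well, so $g\in G$. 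As $G$ is a sublattice this gives $G'\subseteq G$, and it remains to show that $G$ itself is regular with $E$ order dense.

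The heart of the matter, and the step I expect to be the main obstacle, is transferring information between $E$ and $F$ for elements of $G\setminus E$. I would isolate this in one lemma: \emph{if $p\in G_{+}$ and $p\le e^{*}$ for some $e^{*}\in E_{+}$, then every $F$-upper bound of $E\cap[0_{F},p]$ dominates $p$, i.e. $p=\bigvee_{F}(E\cap[0_{F},p])$.} To prove it, choose $d_{\beta}\in E_{+}$ with $d_{\beta}\xrightarrow[]{o}p$ and replace $d_{\beta}$ by $d_{\beta}\wedge e^{*}$, so that $(d_{\beta})\subseteq E\cap[0_{F},e^{*}]$ still converges to $p$. A convergent net is order Cauchy, and being order bounded in $E$ it is, by the regularity of $E$ and Theorem \ref{gt1}(iii), order Cauchy in $E$: there is $w_{\delta}\downarrow 0_{F}$ in $E$ and a cofinal $\delta\mapsto\beta(\delta)$ with $|d_{\beta}-d_{\beta'}|\le w_{\delta}$ for $\beta,\beta'\ge\beta(\delta)$. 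The key device is the net $m_{\delta}:=(d_{\beta(\delta)}-w_{\delta})^{+}\in E_{+}$: since $d_{\beta}\ge d_{\beta(\delta)}-w_{\delta}$ for $\beta\ge\beta(\delta)$, $m_{\delta}$ is an eventual lower bound of $(d_{\beta})$, hence $m_{\delta}\le\bigvee(d_{\beta})^{\searrow}=p$ by Proposition \ref{ocs}; while $d_{\beta(\delta)}\xrightarrow[]{o}p$ (a cofinal subnet) and $w_{\delta}\xrightarrow[]{o}0_{F}$ (regularity), so $m_{\delta}\xrightarrow[]{o}p$. Thus $(m_{\delta})\subseteq E\cap[0_{F},p]$ order converges to $p$, and any upper bound of $E\cap[0_{F},p]$ is an upper bound of $(m_{\delta})$, hence dominates $p$. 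Order density of $E$ in $G$ follows at once: for $f\in G_{+}$ with $e_{\alpha}\in E_{+}$, $e_{\alpha}\xrightarrow[]{o}f$, Proposition \ref{ocs}(ii) gives $\bigvee_{\alpha}(f\wedge e_{\alpha})=f$, each $f\wedge e_{\alpha}\in G$ is dominated by $e_{\alpha}\in E$, and the lemma gives $f\wedge e_{\alpha}=\bigvee_{F}(E\cap[0_{F},f\wedge e_{\alpha}])$, whence $f=\bigvee_{F}(E\cap[0_{F},f])$.

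For regularity I would use Corollary \ref{idcont}: it suffices that every net $v_{\gamma}\downarrow 0_{F}$ in $G$ has $\bigwedge_{F}v_{\gamma}=0_{F}$. I would pass to the order completion $F^{\delta}$, where $E$ is still regular (as $F$ is order dense, hence regular, in $F^{\delta}$), and set $s:=\bigwedge_{F^{\delta}}v_{\gamma}$; since any positive $F$-lower bound $w$ of $(v_{\gamma})$ satisfies $w\le s$, it is enough to show $s=0_{F}$. The inclusion $G\subseteq E^{dd}$ (a band is order closed, so it contains the order adherence of $E$) ensures that if $s>0_{F}$ then $s$ is not disjoint from $E$; replacing $v_{\gamma}$ by $v_{\gamma}\wedge e$ for a suitable $e\in E_{+}$ with $e\wedge s>0_{F}$, I may assume $s\le e\in E$. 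Any $e'\in E$ with $e'\le s$ lies below every $v_{\gamma}$, hence $e'\le\bigwedge_{G}v_{\gamma}=0_{F}$, so $E\cap[0_{F},s]=\{0_{F}\}$; applying the lemma to $s$ (now with ambient $F^{\delta}$) would force $s=\bigvee(E\cap[0_{F},s])=0_{F}$, the desired contradiction.

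The genuinely delicate point here — which I regard as the main obstacle for the regularity half — is that invoking the lemma for $s$ requires knowing that $s$ lies in the order adherence of $E$ in $F^{\delta}$; equivalently, that this adherence is stable under the infimum of a decreasing net drawn from it (note that order adherence is not idempotent in general, so this does not come for free). I would establish it by a diagonal argument exploiting order completeness of $F^{\delta}$: each $v_{\gamma}$ is, by the lemma, the supremum of the $E$-elements below it, and interlacing these upward approximations with the downward net $v_{\gamma}\downarrow s$ produces an $E$-net order converging to $s$. The residual verifications — that $\delta\mapsto\beta(\delta)$ yields an honest cofinal subnet, that the relevant suprema and infima are preserved under the transfers between $E$, $F$ and $F^{\delta}$, and the reductions obtained by meeting with $e$ — are routine and I would relegate them to short remarks.
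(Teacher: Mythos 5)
Your key lemma, your order-density argument, and your maximality step are all correct (one small repair: the map $\delta\mapsto\beta(\delta)$ need not be monotone or cofinal, so $(d_{\beta(\delta)})$ is not a subnet; but passing to the order limit over $\beta$ in $d_{\beta(\delta)}-w_{\delta}\le d_{\beta}\le d_{\beta(\delta)}+w_{\delta}$ and using closedness of order intervals gives $\left|d_{\beta(\delta)}-p\right|\le w_{\delta}$, hence $\left|m_{\delta}-p\right|\le 2w_{\delta}$ directly, which is all you need). The genuine gap is exactly the point you flag and then defer: the claim that $s=\bigwedge_{F^{\delta}}v_{\gamma}$ lies in the order adherence of $E$ in $F^{\delta}$. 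This is not a removable technicality. Since the adherence is a sublattice, stability of $\overline{E}^{1}_{o}$ under infima of decreasing nets (and dually suprema of increasing nets) is essentially the assertion that $\overline{E}^{1}_{o}$ is order closed, which is one of the substantive conclusions of Theorem \ref{rod} itself: in the paper it is obtained only \emph{after} regularity of the adherence has been proven, via the maximality bootstrap through the second uo-adherence ($\overline{E}^{2}_{uo}\subset\overline{E}^{1}_{o}$). Your plan needs this closedness-type fact \emph{before} regularity, so the logical order is reversed and, as written, the regularity half is circular: regularity of $G$ rests on the stability claim, and the stability claim is (in any known treatment) a consequence of regularity of the adherence.

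Moreover, the proposed "diagonal argument" cannot be completed as sketched. Writing $D_{\gamma}=E\cap\left[0_{F},v_{\gamma}\right]$, the natural interlaced index sets fail to be directed: ordering pairs $\left(\gamma,d\right)$, $d\in D_{\gamma}$, by $\gamma_{1}\le\gamma_{2}$ and $d_{1}\wedge s\le d_{2}\wedge s$, directedness requires some $d_{3}\in D_{\gamma_{3}}$ with $d_{3}\wedge s\ge\left(d_{1}\vee d_{2}\right)\wedge s$, and the only available information, $\bigvee\limits_{d\in D_{\gamma_{3}}}\left(d\wedge s\right)=s$, supplies no such single element. Repairing this with choice functions $\phi\in\prod_{\gamma}D_{\gamma}$ leads to an interchange of the form $\bigwedge_{\phi}\bigvee_{\gamma}\left(s-\phi\left(\gamma\right)\wedge s\right)=0_{F}$, i.e. precisely the kind of infinite inf-sup distributivity whose failure is the reason order convergence is not topological and order adherence is not idempotent — so no regularity-free "routine" interlacing can exist, and your sketch gives no indication of where regularity of $E$ would re-enter. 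The paper circumvents all of this by a different mechanism: it first proves regularity of the (uo-)adherence by applying Proposition \ref{oc} to the inclusion map $J_{\overline{E}^{1}_{uo}}:\overline{E}^{1}_{uo}\to F$, using that $E$ is topologically dense (for the induced uo convergence) and order dense in $\overline{E}^{1}_{uo}$ while $\left.J\right|_{E}$ is order continuous by Theorem \ref{gt1}; this rests on the closed-ideal argument of Proposition \ref{loc0}, where the interplay between (topological) closure and order continuity is handled once and for all. To complete your proof you would either have to import that operator-theoretic propagation step in place of your third paragraph, or supply a genuinely new proof of the stability claim; the diagonal argument is not one.
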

\begin{proof}
We will denote the order and uo adherences of $E$ by $\overline{E}_{o}^{1}$ and $\overline{E}_{uo}^{1}$ respectively. Since uo convergence is weaker than order convergence, we have $\overline{E}_{o}^{1}\subset \overline{E}_{uo}^{1}$. Next we show that $E$ is order dense in $\overline{E}_{uo}^{1}$.\medskip

Let $f\in \overline{E}_{uo}^{1}$ with $f>0_{F}$. There is a net $\left(e_{\alpha}\right)_{\alpha\in A}\subset E_{+}$ such that $e_{\alpha}\xrightarrow[]{uo}f$, and so there is $G\subset F$ with $\bigwedge G=0_{F}$ which dominates the tails of $\left(\left|e_{\alpha}-f\right|\wedge f\right)_{\alpha\in A}$. Since $\bigwedge G=0_{F}$ there is $g\in G$ such that $f\not\le g$, i.e. $\left(f-g\right)^{+}>0_{F}$. Let $\alpha_{0}$ be such that $\left|f-e_{\alpha}\right|\wedge f\le g$, for every $\alpha\ge \alpha_{0}$. If $\alpha\ge \alpha_{0}$, then $e_{\alpha}\ge 0_{F}$ and $e_{\alpha}\ge f-\left|f-e_{\alpha}\right|$ yield $e_{\alpha}\ge \left(f-\left|f-e_{\alpha}\right|\right)^{+}=f-f\wedge\left|f-e_{\alpha}\right| \ge f-g$; using $e_{\alpha}\ge 0_{F}$ again leads to $e_{\alpha}\ge\left(f-g\right)^{+}>0_{F}$. Hence, $\bigwedge_{F}\left\{e_{\alpha}\right\}_{\alpha\ge\alpha_{0}}\ne 0_{F}$  (the infimum is either strictly positive or does not exist), and since $E$ is regular, there is $e\in E$ such that $e_{\alpha}\ge e>0_{F}$, for every $\alpha\ge\alpha_{0}$. As $e+F_{+}$ is uo-closed, we conclude that $f=\lim\limits_{\alpha\ge\alpha_{0}}e_{\alpha}\ge e$. Since $f$ was arbitrary, $E$ is order dense in $\overline{E}_{uo}^{1}$.\medskip

Consider the inclusion $J_{\overline{E}^{1}_{uo}}$ from $\overline{E}^{1}_{uo}$ into $F$. It is continuous with respect to the induced uo convergence on $\overline{E}^{1}_{uo}$ (NOT the uo convergence of $\overline{E}^{1}_{uo}$), and moreover, $E$ is dense and order dense in $\overline{E}^{1}_{uo}$. Since $\left.J_{\overline{E}^{1}_{uo}}\right|_{E}$ is the inclusion $J_{E}$ of $E$ into $F$, which is order continuous, it follows from Proposition \ref{oc} that $J_{\overline{E}^{1}_{uo}}$ is order continuous, and so $\overline{E}^{1}_{uo}$ is regular. Applying the established parts of the proof to $\overline{E}^{1}_{uo}$, we conclude that $\overline{E}^{1}_{uo}$ is order dense in $\overline{E}^{2}_{uo}=\overline{\overline{E}^{1}_{uo}}^{1}_{uo}$, which is a regular sublattice. Due to transitivity of order density, $E$ is order dense in $\overline{E}^{2}_{uo}$.\medskip

Now assume that $H$ is a regular sublattice of $F$ such that $E$ is order dense in $H$. Then, $h=\bigvee_{H}\left(\left[0_{F},h\right]\cap E\right)$, for every $h\in H_{+}$. Since $H$ is regular, the supremum can be taken in $F$, and so $\left[0_{F},h\right]\cap E$ is an increasing net in $E$ whose supremum (and so order limit) is $h$. Hence, $h$ belongs to the order adherence $\overline{E}_{o}^{1}$ of $E$, and so $H_{+}\subset \overline{E}_{o}^{1}$, which yields $H\subset \overline{E}_{o}^{1}$. In particular, since $\overline{E}^{2}_{uo}$ is regular, and $E$ is order dense in $\overline{E}^{2}_{uo}$, it follows that $\overline{E}_{o}^{1}\subset \overline{E}_{uo}^{1}\subset \overline{E}^{2}_{uo}\subset \overline{E}^{1}_{o}$, and so all these three sets are equal (and also equal to $\overline{E}_{o}^{2}$). Thus, $\overline{E}^{1}_{o}$ is uo-closed.
\end{proof}

\begin{remark}\label{reg4}It also follows from the proof that if $E$ is regular, then $\overline{E}_{o}^{1}$ is uo-closed, so that $\overline{E}_{o}^{1}= \overline{E}_{uo}^{1}=\overline{E}_{o}=\overline{E}_{uo}$. Note that $\left(\overline{E}_{o}\right)_{+}$ consists of the supremums of increasing nets in $E$. Hence, if $E$ contains all such supremums, then it is order closed.
\qed\end{remark}

\begin{corollary}\label{reg1}If $E$ is a regular sublattice of $F$, and $H$ is a sublattice such that $E\subset H\subset \overline{E}_{o}$, then $H$ is regular and $E$ is order dense in $H$.
\end{corollary}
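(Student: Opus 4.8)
The plan is to deduce everything from Theorem \ref{rod} together with the two transitivity principles recorded earlier in the paper: that order density implies regularity, and that a regular (resp.\ order dense) sublattice of a regular (resp.\ order dense) sublattice is again regular (resp.\ order dense). The key realization is that $\overline{E}_o$ already carries all the structure we need, so the only work is to transport it to the intermediate sublattice $H$.

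First I would invoke Theorem \ref{rod} and the remark following it: since $E$ is regular, $\overline{E}_o=\overline{E}_o^1$ is a regular sublattice of $F$ in which $E$ is order dense. This is the ambient object inside which everything takes place, and I would use that suprema and infima computed in $\overline{E}_o$ agree with those in $F$, precisely because $\overline{E}_o$ is regular.

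Next I would establish, in one stroke, both that $E$ is order dense in $H$ and that $H$ is order dense in $\overline{E}_o$. Both follow from the single observation that $E\subset H\subset \overline{E}_o$ with $E$ order dense in $\overline{E}_o$: given any $h\in H_+\backslash\left\{0_F\right\}$, we have $h\in\left(\overline{E}_o\right)_+\backslash\left\{0_F\right\}$, so order density of $E$ in $\overline{E}_o$ produces $e\in E$ with $0_F<e\le h$. Since the order on a sublattice is inherited from $F$, this $e$ witnesses $E\cap\left(0_F,h\right]\ne\varnothing$, giving order density of $E$ in $H$; and since $e\in H$, the same $e$ witnesses order density of $H$ in $\overline{E}_o$.

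Finally I would assemble the regularity of $H$ in $F$: order density of $H$ in $\overline{E}_o$ implies $H$ is regular in $\overline{E}_o$, and since $\overline{E}_o$ is regular in $F$, transitivity of regularity yields that $H$ is regular in $F$. I do not anticipate a genuine obstacle here, as the statement is essentially a bookkeeping consequence of Theorem \ref{rod}; the only point requiring (minor) care is to keep track of which ambient lattice each order-density and regularity assertion refers to, so that the two transitivity principles are applied with their hypotheses correctly matched.
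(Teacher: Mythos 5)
Your proposal is correct and follows essentially the same route as the paper's proof: invoke Theorem \ref{rod} to get that $E$ is order dense in the regular sublattice $\overline{E}_{o}$, observe that this makes $H$ order dense (hence regular) in $\overline{E}_{o}$, and conclude by transitivity of regularity. The only difference is expository: you spell out the order density of $E$ in $H$ explicitly, which the paper leaves implicit.
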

\begin{proof}
If $E$ is regular, then it is order dense in $\overline{E}_{o}$, and the latter is regular. If $E\subset H\subset \overline{E}_{o}$, then $H$ is order dense in $\overline{E}_{o}$, hence regular, and so regular in $F$ due to transitivity of regularity.
\end{proof}

An order complete vector lattice $H$ is \emph{universally complete} if every set of mutually disjoint vectors is order bounded. Every Archimedean vector lattice $F$ embeds as an order dense sublattice into a unique universally complete vector lattice $F^{u}$, called the \emph{universal completion} of $F$ (see \cite[Theorem 7.23]{ab0}). The following was conjectured by Vladimir Troitsky.

\begin{corollary}
If $E$ is a regular sublattice of $F$, then $\overline{E}^{F^{u}}_{o}$ (the order closure of $E$ viewed as a sublattice of $F^{u}$) is the universal completion of $E$, and $\overline{E}^{F}_{o}=\overline{E}^{F^{u}}_{o}\cap F$.
\end{corollary}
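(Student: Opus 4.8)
The plan is to realize the universal completion of $E$ concretely inside $F^{u}$ as $G:=\overline{E}^{F^{u}}_{o}$, and then to compare $G$ with $\overline{E}^{F}_{o}$. The first observation is that since $E$ is regular in $F$ and $F$ is order dense (hence regular) in $F^{u}$, transitivity of regularity makes $E$ a regular sublattice of $F^{u}$. Thus Theorem \ref{rod} applies verbatim with $F^{u}$ in place of $F$: $G$ is a regular sublattice of $F^{u}$ in which $E$ is order dense, and by the remark following Theorem \ref{rod} (valid because $E$ is regular) it is order closed in $F^{u}$, so the order closure coincides with the order adherence in both ambient lattices.

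The heart of the argument is to show that $G$ is universally complete, after which uniqueness of the universal completion identifies $G$ with $E^{u}$. Here I would exploit that $F^{u}$ is itself order complete and universally complete, and that $G$ is order closed in $F^{u}$. For order completeness, given $A\subset G_{+}$ directed upward and bounded in $G$, the supremum $s=\bigvee_{F^{u}}A$ exists in $F^{u}$; viewing $A$ as an increasing net it order converges to $s$ in $F^{u}$, and order closedness of $G$ forces $s\in G$, whence $s$ is also the supremum of $A$ in $G$. For universal completeness, given a disjoint family $D\subset G_{+}$, universal completeness of $F^{u}$ bounds $D$ there, its finite suprema increase to $w=\bigvee_{F^{u}}D$, which lies in $G$ again by order closedness, so $D$ is order bounded in $G$. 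Since $E$ is order dense in the now universally complete $G$, uniqueness of the universal completion gives $G=E^{u}$, proving the first assertion.

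It remains to prove $\overline{E}^{F}_{o}=G\cap F$. For the inclusion $\subseteq$, any $f\in\overline{E}^{F}_{o}$ is an order limit in $F$ of a net in $E$; since $F$ is regular in $F^{u}$, the inclusion $F\hookrightarrow F^{u}$ is order continuous by Theorem \ref{gt1}, so the same net order converges to $f$ in $F^{u}$, placing $f$ in $G\cap F$. For $\supseteq$, take $f\in\left(G\cap F\right)_{+}$; order density of $E$ in $G$ gives $f=\bigvee_{G}\left(E\cap\left[0_{F},f\right]\right)$, and regularity of $G$ in $F^{u}$ (again via order continuity of the inclusion, Theorem \ref{gt1}) promotes this to $f=\bigvee_{F^{u}}\left(E\cap\left[0_{F},f\right]\right)$. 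As $f\in F$, the very same supremum is computed in $F$, so the increasing net $E\cap\left[0_{F},f\right]$ order converges to $f$ in $F$ and $f\in\overline{E}^{F}_{o}$; the general case follows by splitting $f=f^{+}-f^{-}$. I expect the main obstacle to be the verification that $G$ inherits universal completeness from $F^{u}$ through order closedness, specifically checking that the relevant suprema, computed first in $F^{u}$, actually land back in $G$ and agree with the suprema computed in $G$; everything else reduces to transitivity of regularity together with the order-density characterization.
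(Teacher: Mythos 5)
Your proof is correct, and its first half (the identification of $\overline{E}^{F^{u}}_{o}$ with $E^{u}$) is essentially the paper's argument: transitivity of regularity, Theorem \ref{rod} applied in $F^{u}$, and then universal completeness of the order closure obtained by pulling suprema computed in $F^{u}$ back into the order-closed sublattice via increasing nets of finite suprema. Where you genuinely diverge is the second claim, $\overline{E}^{F}_{o}=\overline{E}^{F^{u}}_{o}\cap F$. The paper dispatches it in one line: since $E$ is regular, its order closure coincides with its uo-adherence (remark after Theorem \ref{rod}), and the inclusion of $F$ into $F^{u}$ is a \emph{uo-embedding} by Theorem \ref{gt1}(vi), so a net in $E$ uo-converges to $f\in F$ in $F$ if and only if it does so in $F^{u}$, giving both inclusions at once. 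You instead prove the two inclusions separately at the level of order convergence: the inclusion $\subseteq$ from order continuity of $F\hookrightarrow F^{u}$ (regularity, Theorem \ref{gt1}), and $\supseteq$ from order density of $E$ in $\overline{E}^{F^{u}}_{o}$ together with preservation of suprema under the regular inclusion, finishing with the lattice decomposition $f=f^{+}-f^{-}$. Your route avoids uo-convergence entirely and is self-contained given Theorem \ref{rod} and the order-density characterization, at the cost of being longer and of having to invoke the regularity/order-adherence remark explicitly in both ambient lattices (which you correctly do); the paper's route is shorter but leans on the full strength of the uo-embedding equivalence. Both are valid.
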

\begin{proof}
Since $E$ is regular in $F$, and $F$ is order dense in $F^{u}$, it follows that $E$ is regular in $F^{u}$, and so $E$ is order dense in $H=\overline{E}^{F^{u}}_{o}$, which is regular in $F^{u}$, according to Theorem \ref{rod}. In order to prove that $H=E^{u}$ it is left to show that $H$ is universally complete. Assume that $G\subset H$ is order bounded in $F^{u}$. Then $G$ has a supremum $g\in F^{u}$ which is also a supremum of an increasing net $G^{\vee}\subset H$. Therefore, $g$ is an order limit of that net, and so it belongs to $H$ since the latter is order closed. It is easy to see that $g$ is the supremum of $G$ in $H$. Since any order bounded set in $H$ is also order bounded in $F^{u}$, as well as any set in $H$ that consists of disjoint vectors, we conclude that $H$ is universally complete. The last claim follows from the fact that according to Theorem \ref{gt1}, the inclusion of $F$ into $F^{u}$ is an uo-embedding.
\end{proof}

We will call a locally solid additive convergence $\eta$ of $F$ \emph{sturdy} if $\overline{E}^{1}_{\eta}\subset\overline{E}_{o}$, for every regular sublattice $E$. Then $\left\{0_{E}\right\}$ is closed, and so a sturdy convergence is always Hausdorff. A \emph{sturdy vector lattice} is a vector lattice endowed with a sturdy locally solid additive convergence. It follows from Theorem \ref{rod} and Corollary \ref{reg1} that $F$ is sturdy if and only if every regular sublattice is order dense in its adherence, which is itself a regular sublattice. In fact, the last assertion can be slightly improved.

\begin{corollary}\label{reg2}Every regular sublattice of a sturdy vector lattice $\left(F,\eta\right)$ is order dense in its closure, which is itself a regular sublattice.
\end{corollary}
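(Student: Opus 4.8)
The plan is to sandwich the $\eta$-closure $\overline{E}_{\eta}$ between $E$ and the order closure $\overline{E}_{o}$, and then to quote Corollary \ref{reg1}. By Theorem \ref{rod} and the remark following it, regularity of $E$ guarantees that $\overline{E}_{o}$ is itself a regular sublattice in which $E$ is order dense, and it is order closed. Hence, as soon as we know that $\overline{E}_{\eta}$ is a sublattice with $E\subset\overline{E}_{\eta}\subset\overline{E}_{o}$, Corollary \ref{reg1} yields at once that $\overline{E}_{\eta}$ is regular and that $E$ is order dense in it, which is the whole assertion. So the proof reduces to two points: the inclusion $\overline{E}_{\eta}\subset\overline{E}_{o}$, and the fact that $\overline{E}_{\eta}$ is a sublattice.

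For the inclusion I would show that $\overline{E}_{o}$ is already $\eta$-closed. Since $\overline{E}_{o}$ is a regular sublattice, sturdiness applied to it gives that its $\eta$-adherence is contained in its order closure; but $\overline{E}_{o}$ is order closed, so its order closure is $\overline{E}_{o}$ itself. Thus the $\eta$-adherence of $\overline{E}_{o}$ lies in $\overline{E}_{o}$, i.e. $\overline{E}_{o}$ is $\eta$-closed. Being an $\eta$-closed set containing $E$, it contains the smallest such set, so $\overline{E}_{\eta}\subset\overline{E}_{o}$.

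The remaining and only delicate point is that $\overline{E}_{\eta}$ is a sublattice, the difficulty being that $\eta$ is merely additive, so a priori the $\eta$-closure of a sublattice need not be stable under scalar multiplication. Stability under addition, the additive inverse, and the lattice operations is immediate from the $\eta$-continuity of these maps, so everything hinges on multiplication by a fixed scalar $\lambda$. Here I would invoke local solidness: writing $\lambda=n\mu$ with $n\in\N$ and $\left|\mu\right|\le1$, any $\eta$-null net $g_{\alpha}\xrightarrow[]{\eta}0_{F}$ satisfies $\left|\mu g_{\alpha}\right|\le\left|g_{\alpha}\right|$, whence $\mu g_{\alpha}\xrightarrow[]{\eta}0_{F}$, and adding $n$ copies gives $\lambda g_{\alpha}=n\mu g_{\alpha}\xrightarrow[]{\eta}0_{F}$; thus $f\mapsto\lambda f$ is $\eta$-continuous for each fixed $\lambda$. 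Continuity of these operations then propagates to the closure (a preimage of a closed set under a continuous map is closed), so $\overline{E}_{\eta}$ is indeed a sublattice. With both points in hand, Corollary \ref{reg1} finishes the argument. I expect this scalar-multiplication verification to be the crux: it is exactly the subtlety flagged earlier for additive (rather than linear) convergences, and local solidness is what resolves it; the rest is a direct consequence of sturdiness together with Theorem \ref{rod}.
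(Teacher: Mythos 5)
Your proof is correct, and its skeleton is exactly the paper's: Theorem \ref{rod} (with the remark after it) makes $\overline{E}_{o}$ a regular, order closed sublattice containing $E$ as an order dense sublattice; sturdiness applied to the regular sublattice $\overline{E}_{o}$ shows its $\eta$-adherence lies in its order closure, which is $\overline{E}_{o}$ itself, so $\overline{E}_{o}$ is $\eta$-closed and hence $\overline{E}_{\eta}\subset\overline{E}_{o}$; then Corollary \ref{reg1} finishes. Where you go beyond the paper is the explicit verification that $\overline{E}_{\eta}$ is a sublattice. The paper uses this silently when invoking Corollary \ref{reg1} (whose hypothesis requires $H$ to be a sublattice), even though its preliminary remarks assert that adherences and closures of solids and ideals -- but pointedly not of sublattices -- are of the same type for merely additive locally solid convergences; so this step genuinely called for an argument. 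Yours is sound: writing a fixed scalar as $\lambda=n\mu$ with $n\in\N$ and $\left|\mu\right|\le 1$, local solidness turns $g_{\alpha}\xrightarrow[]{\eta}0_{F}$ into $\mu g_{\alpha}\xrightarrow[]{\eta}0_{F}$, continuity of addition gives $n\mu g_{\alpha}\xrightarrow[]{\eta}0_{F}$, and since $f\mapsto\lambda f$ is linear, continuity at $0_{F}$ upgrades to continuity everywhere (this is Proposition \ref{contop}); combined with continuity of addition and of the lattice operations (the latter again from local solidness plus additivity), the preimage-of-a-closed-set argument makes $\overline{E}_{\eta}$ a sublattice. So your proposal is the paper's proof plus a justification of a step the paper leaves implicit; if anything, it is the more complete of the two.
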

\begin{proof}
Let $E$ be a regular sublattice of $F$. It follows from Theorem \ref{rod} and Remark \ref{reg4} that $\overline{E}_{o}$ is regular and order closed. Hence, $\overline{\overline{E}_{o}}^{1}_{\eta}\subset \overline{\overline{E}_{o}}_{o}=\overline{E}_{o}$, and so $\overline{E}_{o}$ is closed in $\left(F,\eta\right)$. Thus, $\overline{E}_{\eta}\subset \overline{E}_{o}$, and so $E$ is order dense in $\overline{E}_{\eta}$, which is itself a regular sublattice, according to Corollary \ref{reg1}.
\end{proof}

In particular, every topologically dense regular sublattice of a sturdy convergence vector lattice is order dense.

\begin{corollary}\label{doc}Let $F$ be a sturdy vector lattice, and let $H\subset F$ be a topologically dense regular sublattice. If $T:F\to E$ is a continuous operator into a vector lattice $E$ endowed with a Hausdorff locally solid additive convergence such that $\left.T\right|_{H}$ is positive and order continuous, then $T$ is positive and order continuous.
\end{corollary}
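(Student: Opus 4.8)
The plan is to present the statement as an application of Proposition \ref{oc}. Since $T$ is continuous and $\left.T\right|_{H}$ is order continuous by hypothesis, it suffices to check two things: that $H$ is an order dense, topologically majorizing sublattice of $F$, and that $T$ is positive on all of $F$. Granting these, the implication (iv)$\Rightarrow$(i) of Proposition \ref{oc} immediately yields that $T$ is order continuous, which together with positivity gives the full conclusion.

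The first point is essentially free. Because $F$ is sturdy and $H$ is a topologically dense regular sublattice, the remark following Corollary \ref{reg2} tells us at once that $H$ is order dense in $F$. For the majorizing property, $H\subseteq I\left(H\right)$ gives $F=\overline{H}\subseteq\overline{I\left(H\right)}$, so $\overline{I\left(H\right)}=F$; thus $H$ is topologically majorizing. No real work is needed here beyond citing the sturdiness machinery and the topological density.

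The substantive step, and the one I expect to be the main obstacle, is the positivity of $T$. First I would record that the positive cone $E_{+}$ is closed: regarding $E$ as an ideal in itself, the adherence/closure identity for solids and ideals gives $\overline{E_{+}}=\overline{E}_{+}=E_{+}$. Since $T$ is continuous, the preimage $T^{-1}\left(E_{+}\right)$ is therefore a closed subset of $F$, and it contains $H_{+}$ because $\left.T\right|_{H}$ is positive. Taking closures and using that $H$ is a topologically dense sublattice, so that $\overline{H_{+}}=\overline{H}_{+}=F_{+}$, we obtain $F_{+}\subseteq T^{-1}\left(E_{+}\right)$, i.e. $TF_{+}\subseteq E_{+}$. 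Hence $T$ is positive, and Proposition \ref{oc} applies to conclude the proof. The only delicate point is this density of $H_{+}$ in $F_{+}$ together with the closedness of $E_{+}$; everything else reduces to a direct invocation of Corollary \ref{reg2} and Proposition \ref{oc}.
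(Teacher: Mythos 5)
Your proposal is correct and takes essentially the same route as the paper's own proof: positivity of $T$ via the closedness of $E_{+}$, continuity of $T$, and the identity $F_{+}=\overline{H}_{+}=\overline{H_{+}}\subset T^{-1}E_{+}$; order density of $H$ from sturdiness via the remark following Corollary \ref{reg2}; and then Proposition \ref{oc} to conclude order continuity. Your extra justifications (closedness of $E_{+}$ by viewing $E$ as an ideal in itself, and the observation that topological density of $H$ trivially gives the topologically majorizing hypothesis in Proposition \ref{oc}) merely make explicit steps the paper leaves implicit.
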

\begin{proof}
Since $E_{+}$ is closed and $T$ is continuous, $T^{-1}E_{+}$ is a closed set that contains $H_{+}$. Since $H$ is topologically dense, it follows that $F_{+}=\overline{H}_{+}=\overline{H_{+}}\subset T^{-1}E_{+}$, from where $TF_{+}\subset E_{+}$, and so $T$ is positive. According to the comment above, $H$ is order dense, and so $T$ is order continuous by Proposition \ref{oc}.
\end{proof}

\begin{example}
From Remark \ref{reg4} any locally solid convergence on $F$ which is stronger than uo convergence is sturdy. Hence, uo convergence itself, as well as order and uniform convergences are sturdy. Another example is the compact-open topology on an almost locally compact topological space, by virtue of Theorem \ref{couo2}. If $F$ is a Banach lattice, then while the norm convergence is not necessarily stronger than uo convergence (because e.g. norm convergence $L_{p}\left[0,1\right]$ does not imply a.e. convergence), the norm-closure coincides with the uniform adherence (this follows from \cite[Lemma 1.1]{tt}). Hence, the norm convergence on a Banach lattice is sturdy.
\qed\end{example}

We will now consider a non-sturdy locally solid topological vector lattice.

\begin{example}\label{nons}
Let $X$ be a topological space formed as a topological sum (disconnected union) of $\Q$ with $\R\backslash \Q$ (or more generally, a topological sum of two complementary dense subsets of some topological space). It follows from Stone-Weierstrass theorem that $\Co\left(\R\right)$ is dense in $\Co\left(X\right)$, which is endowed with the compact-open topology. We will show that $\Co\left(\R\right)$ is regular, but not order dense in $\Co\left(X\right)$, which violates sturdiness. This can be deduced from \cite[Example 6.1, Proposition 6.5 and Theorem 7.1]{erz}, but here is a more direct argument using \cite[Lemma 3.1]{et}. Since there is no $f\in \Co\left(\R\right)$ between $\0$ and $\1_{\Q}$, we conclude that $\Co\left(\R\right)$ is not order dense in $\Co\left(X\right)$. If $G\subset \Co\left(\R\right)$ is such that $\bigwedge_{\Co\left(\R\right)} G=\0$, then for every $\varepsilon>0$ and an open nonempty $U\subset X$ there is an open nonempty $V\subset \Int\overline{U}^{\R}$ and $g\in G$ such that $\left.g\right|_{V}\le\varepsilon$. Since $V\cap U$ is an open in $X$ nonempty subset of $U$, we conclude that $\bigwedge_{\Co\left(X\right)} G=\0$. Thus, $\Co\left(\R\right)$ is regular in $\Co\left(X\right)$.
\qed\end{example}

Modifying the idea of Example \ref{nons} we will construct a regular sublattice whose closure is not regular.

\begin{example}\label{nonr}
Let $X=\Q\times\N_{0}$, where $\N_{0}=\N\cup\left\{0\right\}$. Note that $X$ is homeomorphic to $\Q$. Let $\left\{q_{n},~n\in\N\right\}$ be an enumeration of the rational numbers. Define $\varphi:X\to\R$ by $\varphi\left(p,0\right)=p$ and $\varphi\left(q,n\right)=\pi q+q_{n}$, for $p,q\in\Q$ and $n\in\N$; note that $\varphi$ is injective on $\Q\times\N$ and on $\Q\times\left\{0\right\}$, and if $\varphi\left(p,0\right)=\varphi\left(q,n\right)$, then $p=q_{n}$ and $q=0$.

Using a similar tactic as in Example \ref{nons} one can show that the lattice $E=\left\{f\circ\varphi,~ f\in \Co\left(\R\right)\right\}$ is a regular sublattice of $\Co\left(X\right)$. Recall that by Stone-Weierstrass theorem the closure of a sublattice consists of all continuous functions, which satisfy the same constraints as all elements of the sublattice (a constraint is an equality of the type $\alpha f\left(x\right)=\beta f\left(y\right)$, for $\alpha,\beta\ge0$ and $x,y\in X$; the claim can be deduced from \cite[Theorem 16.5.5]{bn}, see also \cite[Theorem 2.1]{et2}). Hence, $\overline{E}$ consists of all functions $f\in\Co\left(X\right)$ such that $f\left(q_{n},0\right)=f\left(0,n\right)$, for every $n\in\N$. Let $f_{m}:X\to\R$ be defined as $f_{m}\left(p,0\right)=1$, and $f_{m}\left(q,n\right)=e^{-m\left|q\right|}$, for $p,q\in\Q$ and $m,n\in\N$. It is easy to see that $f_{m}\in \overline{E}$ and $f_{m}\ge \1_{\Q\times\left\{0\right\}}$, for every $m\in\N$, and so $f_{m}\not\downarrow_{\Co\left(X\right)}\0$. On the other hand, since $f_{m}\left(q,n\right)\downarrow0$, for every $q\ne 0$ and $n\in\N$, it follows that $f_{m}\downarrow_{\overline{E}}\0$. Thus, $\overline{E}$ is not regular.
\qed\end{example}

Let us finish the section with some questions regarding sturdiness.

\begin{question}
For which Tychonoff spaces $X$ is $\Co\left(X\right)$ sturdy?
\end{question}

As was mentioned before, almost local compactness of $X$ implies sturdiness of $\Co\left(X\right)$, but metrizability does not, as was demonstrated by Example \ref{nons}.

\begin{question}
What permanence properties does sturdiness have? For example, is a product of two sturdy convergence vector lattices sturdy?
\end{question}

It is easy to see that a regular sublattice of a sturdy convergence vector lattice is sturdy in the induced convergence.

\begin{question}
Is there a non-sturdy convergence vector lattice in which the adherence (or closure) of every regular sublattice is regular?
\end{question}

\begin{remark}
A possible candidate may arise from the following observation: if $F$ is order complete and the convergence $\eta$ is weaker than the order convergence, then every $\eta$-closed sublattice $E\subset F$ is regular. Indeed, if $\left(e_{\alpha}\right)_{\alpha\in A}$ is decreasing to $0_{F}$ in $E$, then it is order bounded from below, hence has an infimum $f\in F_{+}$ to which it order converges in $F$, thus converges in $\eta$, therefore $f\in E$, and so $f=\bigwedge_{E}\left(e_{\alpha}\right)_{\alpha\in A}=0_{F}$.
\qed\end{remark}

\section{Uniform convergence and $\sigma$-order continuity}\label{sigs}

In this section we will always consider Archimedean vector lattices endowed with the uniform convergence, as defined in Example \ref{ru}. An important property of the uniform convergence is that an operator is \emph{order bounded} (i.e. maps order intervals inside order intervals) if and only if it is continuous with respect to the uniform convergences (see \cite[Theorem 10.3]{dow}). Since every positive operator is clearly order bounded and uniform convergence is sturdy, we get the following corollaries of propositions \ref{loc0} and \ref{oc}, as well as Corollary \ref{doc}.

\begin{corollary}\label{loc}
In the notations of Proposition \ref{loc0}, $H$ is always uniformly closed.
\end{corollary}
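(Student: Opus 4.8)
The plan is to observe that the continuity hypothesis appearing in the ``moreover'' clause of Proposition \ref{loc0} is automatically satisfied in the present setting, so that the conclusion holds with no further assumptions. By the standing convention of this section, both $F$ and $E$ carry the uniform convergence and $T$ is positive; in particular $T$ is order bounded, since every positive operator maps order intervals into order intervals. The characterization recalled at the start of the section --- namely \cite[Theorem 10.3]{dow} --- asserts that a linear operator between vector lattices is order bounded if and only if it is continuous with respect to the uniform convergences. Hence $T$ is continuous in the uniform convergence.

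Once continuity is in hand, I would invoke Proposition \ref{loc0} directly: its ``moreover'' clause guarantees that whenever $T$ is continuous, the set $H=\left\{h\in F:~ \left.T\right|_{F_{\left|h\right|}}\mbox{ is order continuous}\right\}$ is closed. Since the ambient convergence here is the uniform one, ``closed'' is synonymous with ``uniformly closed'', which is precisely the assertion. The emphasis on ``always'' in the statement records exactly the point that, in contrast to the general Proposition \ref{loc0}, no separate continuity hypothesis need be imposed: positivity by itself forces the continuity that was previously an extra assumption.

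I expect no genuine obstacle here; the corollary is a direct specialization of Proposition \ref{loc0}, and the only substantive input is the equivalence between order boundedness and uniform continuity quoted from \cite{dow}. The entire argument is therefore a one-line deduction once this equivalence is made explicit.
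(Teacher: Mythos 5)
Your proposal is correct and is exactly the paper's own deduction: the paper's preamble to this section notes that every positive operator is order bounded and hence, by \cite[Theorem 10.3]{dow}, continuous with respect to the uniform convergences, so the ``moreover'' clause of Proposition \ref{loc0} applies verbatim. Nothing further is needed.
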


\begin{corollary}\label{uoc}For a positive operator $T:F\to E$ between Archimedean vector lattices the following conditions are equivalent:
\item[(i)] $T$ is order continuous;
\item[(ii)] There is a topologically majorizing $G\subset F_{+}$ such that $\left.T\right|_{F_{g}}$ is order continuous, for every $g\in G$;
\item[(iii)] There is an order dense and topologically majorizing sublattice $H$ such that $\left.T\right|_{H}$ is order continuous.
\item[(iv)] There is a topologically dense regular sublattice $H$ such that $\left.T\right|_{H}$ is order continuous.
\end{corollary}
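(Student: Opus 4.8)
The plan is to deduce everything from the results already established for \emph{continuous} positive operators, the crucial point being that under the uniform convergence continuity comes for free. First I would observe that since $T$ is positive it is order bounded, and by the characterization recalled just above (\cite[Theorem 10.3]{dow}) an order bounded operator between vector lattices carrying the uniform convergence is precisely a continuous one. Hence $T$ is automatically continuous with respect to the uniform convergences on $F$ and $E$, and no ``external'' continuity needs to be imposed. This single observation is exactly what distinguishes the present statement from Proposition \ref{oc} and Corollary \ref{doc}, where continuity figures as an explicit hypothesis.

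Granting continuity of $T$, the equivalence (i)$\Leftrightarrow$(ii)$\Leftrightarrow$(iii) is then immediate: these are verbatim conditions (i), (ii) and (iv) of Proposition \ref{oc}, which now applies to our continuous positive $T$ with no further work.

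It remains to fold in the new condition (iv). The implication (i)$\Rightarrow$(iv) is trivial, taking $H=F$: the whole space is a topologically dense regular sublattice of itself, and $\left.T\right|_{F}=T$ is order continuous by hypothesis. For (iv)$\Rightarrow$(i) I would invoke Corollary \ref{doc}: the uniform convergence is sturdy (as already recorded), $T$ is continuous by the opening observation, and $\left.T\right|_{H}$ is positive and order continuous on the topologically dense regular sublattice $H$; Corollary \ref{doc} then yields order continuity of $T$ at once.

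I do not anticipate a genuine obstacle here, as the argument is essentially bookkeeping; the one thing to get right is the logical order, namely that positivity must be used \emph{first} to secure continuity, after which Proposition \ref{oc} and Corollary \ref{doc} apply directly. The conceptual content of the corollary lies entirely in this automatic passage from positivity to continuity, which frees the statement from any standing continuity assumption.
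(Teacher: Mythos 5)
Your proposal is correct and is precisely the paper's intended argument: the paper derives this corollary from Proposition \ref{oc} and Corollary \ref{doc} by noting that a positive operator is order bounded, hence automatically continuous for the uniform convergences by \cite[Theorem 10.3]{dow}, and that the uniform convergence is sturdy. You have simply spelled out the bookkeeping (including the trivial (i)$\Rightarrow$(iv) with $H=F$) that the paper leaves implicit.
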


A positive operator $T:F\to E$ is $\sigma$\emph{-order continuous} if it maps sequences decreasing to $0_{F}$ into sequences decreasing to $0_{E}$. This property is weaker than the order continuity. If $T$ is a homomorphism, it is $\sigma$-order continuous iff $\bigwedge G=0_{F}$ implies $\bigwedge TG=0_{E}$, for any countable $G$. It is easy to see that Corollary \ref{loc} remains valid for $\sigma$-order continuity. The following result seems to be exclusively sequential and generalizes \cite[Theorem 6]{wnuk}.

\begin{theorem}\label{main1}
Let $T:F\to E$ be a homomorphism between Archimedean vector lattices. Assume that either $F$ or $E$ has $\sigma$-property. If $H$ is a sublattice of $F$ such that $\left.T\right|_{H}$ is $\sigma$-order continuous, then so are $\left.T\right|_{\overline{H}^{1}}$ and $\left.T\right|_{\overline{H}}$.
\end{theorem}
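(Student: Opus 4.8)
The plan is to reduce everything to decreasing sequences in the positive cone and to manufacture, out of a merely uniform approximation, a genuine sequence decreasing to $0$ in the intrinsic order of $H$, to which the hypothesis applies. Since the closure of a sublattice is a sublattice and $\sigma$-order continuity only concerns sequences decreasing to $0$, it suffices to fix $g_n\downarrow 0_F$ in the sublattice $\overline{H}^1$ (resp. $\overline{H}$) and show $\bigwedge_n Tg_n=0_E$; as $Tg_n$ is automatically decreasing, this amounts to showing that if $0_E\le w\le Tg_n$ for all $n$, then $w=0_E$. I will use repeatedly that the uniform adherence is sequential—if a net in $H$ converges uniformly to $g$ relative to some $e$, then choosing a term within $\frac1m e$ for each $m$ produces a sequence in $H$ converging uniformly to $g$—and that $\overline{H_+}^1=\overline{H}^1_+$, so the approximants may be taken in $H_+$.

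First I carry out the case where $F$ has the $\sigma$-property; then adherence is closed, so $\overline{H}^1=\overline{H}$ and both assertions are treated at once. For each $n$ pick $a_n\in H_+$ with $\left|g_n-a_n\right|\le\epsilon_n e$, where $e\in F_+$ is a single ruler dominating the countably many rulers furnished by the approximating sequences (this is where the $\sigma$-property of $F$ enters), and where $\epsilon_n\downarrow 0$ with $\sum_n\epsilon_n\le S$. Put $b_n=a_1\wedge\dots\wedge a_n\in H_+$, a decreasing sequence in $H$. From $\bigwedge_{i\le n}(g_i-\epsilon_i e)\ge\bigwedge_{i\le n}g_i-\sum_{i\le n}\epsilon_i e$ and $\bigwedge_{i\le n}g_i=g_n$ one gets $g_n-Se\le b_n\le g_n+\epsilon_n e$. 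The heart of the argument is that $b_n\downarrow 0_F$ in $H$: if $v\in H$ with $0_F\le v\le b_n$ for all $n$, then $(v-g_n)^+\le\epsilon_n e$, and since $(v-g_n)^+$ increases in $n$ while $\epsilon_n e\downarrow 0_F$ (Archimedeanness of $F$), a tail comparison forces $(v-g_n)^+=0_F$, i.e. $v\le g_n$ for every $n$; as $v\in\overline{H}^1$ and $\bigwedge_n g_n=0_F$ in $\overline{H}^1$, this gives $v=0_F$. Now $\sigma$-order continuity of $\left.T\right|_H$ yields $Tb_n\downarrow 0_E$, and from $g_n\le b_n+Se$ we obtain $w\le Tg_n\le Tb_n+S\,Te$, hence $w\le S\,Te$. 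Repeating with $S\le\frac1m$ and invoking the Archimedeanness of $E$ gives $w=0_E$.

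For the case where $E$ (rather than $F$) has the $\sigma$-property, the same construction is attempted, but now each $a_n$ is approximated relative to its own ruler $e_n$, with no common ruler available in $F$. The remedy is to transport the error estimates to the codomain: a positive operator is order bounded, hence uniformly continuous, so $\left|T(g_n-a_n)\right|\le\epsilon_n Te_n$, and the $\sigma$-property of $E$ provides a single $e'\in E_+$ dominating all $Te_n$. With $b_n=a_1\wedge\dots\wedge a_n\in H_+$ as before, one still has $Tg_n-\theta e'\le Tb_n\le Tg_n+\theta_n e'$ with $\theta$ controllable and $\theta_n\downarrow 0$. The main obstacle is precisely the step that was clean in the previous case—showing $b_n\downarrow 0_F$ in $H$. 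Here the estimate only gives $T(v-g_n)^+=0_E$ (again by a tail comparison, now performed on the images in $E$), so that $Tv=T(v\wedge g_n)$ with $v\wedge g_n\downarrow 0_F$ in $\overline{H}^1$, rather than the sharp conclusion $v\le g_n$: the non-regularity of $H$ in $F$ is no longer bridged inside $F$. I expect to close this gap by passing to the principal ideal $E_{e'}$, on which $\|\cdot\|_{e'}$ is a lattice norm, and running the squeeze there in conjunction with order boundedness of $T$; this is the place where the statement becomes genuinely sequential and the $\sigma$-property is indispensable.

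Finally, in the $E$-case $\overline{H}^1$ may be strictly smaller than $\overline{H}$, so after establishing $\sigma$-order continuity of $\left.T\right|_{\overline{H}^1}$ one iterates the core statement along the transfinite adherence tower $H\subset\overline{H}^1\subset\overline{H}^2\subset\dots$, which stabilises at $\overline{H}$. The only subtlety is at limit stages, where a sequence decreasing to $0_F$ in a union $\bigcup_{\xi<\lambda}\overline{H}^{\xi}$ still decreases to $0_F$ in any earlier stage containing its terms, because a lower bound computed in a sublattice remains a lower bound in the ambient lattice, so the greatest lower bound can only increase. The recurring main obstacle, then, is entirely the regularity gap: manufacturing, out of a uniform approximation, a sequence that genuinely decreases to $0$ in the intrinsic order of $H$, so that the hypothesis that $\left.T\right|_H$ is $\sigma$-order continuous can be brought to bear.
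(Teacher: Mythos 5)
Your argument for the case where $F$ has the $\sigma$-property is correct: the common ruler $e$, the approximants $a_n\in H_+$, the decreasing sequence $b_n=a_1\wedge\dots\wedge a_n$, the verification that $b_n\downarrow 0_F$ in $H$ via the tail comparison $(v-g_n)^+\le(v-g_m)^+\le\epsilon_m e$, and the Archimedean squeeze $w\le S\,Te$ all go through; this is a clean, direct reorganization of the same ingredients the paper uses (there, by contradiction). But the theorem also covers the case where only $E$ has the $\sigma$-property, and there your proposal has a genuine gap, which you acknowledge yourself: the step ``$b_n\downarrow 0_F$ in $H$'' is exactly what cannot be established. Without a common ruler in $F$, the inequality $v\le b_m\le g_m+\epsilon_m e_m$ cannot be squeezed inside $F$; transporting it by $T$ only yields $T(v-g_n)^+=0_E$, and concluding anything from $Tv=T(v\wedge g_n)$ would require precisely the $\sigma$-order continuity on $\overline{H}^{1}$ you are trying to prove. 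The proposed remedy (pass to $E_{e'}$ and ``run the squeeze there'') is not carried out and cannot work as stated, because $\sigma$-order continuity of $T|_{H}$ can only ever be invoked on a sequence that genuinely decreases to $0_F$ in $H$, and your construction produces no such sequence. The paper escapes this by reversing the logic: it assumes $Tg_n\ge w>0_E$, keeps the \emph{whole} doubly indexed family of approximants $g_{mn}$ (all precisions $m$ for each level $n$, not one $a_n$ per level), shows that the contradiction hypothesis together with the error bounds transported into $E$ forces the $T$-images of the relevant finite infima to stay above a fixed positive element, and then applies $\sigma$-order continuity of $T|_{H}$ in contrapositive form: the family cannot have infimum $0_F$ in $H$, so it has a positive lower bound $h\in H$; since for each fixed $n$ the family contains a sequence converging uniformly to $g_n$, uniform limits give $g_n\ge h>0_F$, contradicting $g_n\downarrow 0_F$ in $\overline{H}^{1}$. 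Note that the doubly indexed family is essential for this last transfer; with a single approximant per level, a lower bound of $\{a_n\}$ says nothing about the $g_n$.

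There is a second, independent gap in your plan for $\overline{H}$ (as opposed to $\overline{H}^{1}$) in the $E$-case: the transfinite iteration along the tower $H\subset\overline{H}^{1}\subset\overline{H}^{2}\subset\dots$ breaks at limit ordinals of countable cofinality, already at stage $\omega$. A sequence in $\bigcup_{\xi<\lambda}\overline{H}^{\xi}$ need not lie in any single earlier stage, so the induction hypothesis cannot be applied to it; your observation that decreasing-to-$0_F$ passes down to sublattices only helps once all terms sit in one stage. The paper handles $\overline{H}$ by a different mechanism: a separate claim, proved by transfinite induction over the countable ordinal at which a given countable subset of $\overline{H}$ is captured (Luxemburg--Moore), which produces a single $u\in E_+$ and a countable $C\subset H$ such that everything relevant happens inside the ideal $\{f\in F:\ T|f|\in E_u\}$; this reduces to the case $E=E_u$, where the uniform convergence is a norm convergence, and the proof is finished with a topological argument using the $\|\cdot\|_u$-ball $B$ and the neighborhoods $h_n+T^{-1}B$. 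Some reduction of this kind is unavoidable; the stage-by-stage induction as you set it up cannot reach $\overline{H}$.
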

\begin{proof}
Let us describe the general strategy. Assume that $\left(h_{n}\right)_{n\in\N}$ is a sequence in $\overline{H}^{1}$ or $\overline{H}$, which decreases to $0_{F}$, but there is $e\in E$ such that $Th_{n}\ge e>0_{E}$, for every $n\in\N$. We will seek a contradiction, by constructing a countable $C\subset H_{+}$ such that $\left\{h_{n}\right\}_{n\in\N}\subset \overline{C}$ and $TC\ge g\notin -E_{+}$. Since $T$ is positive it will imply that $TC\ge g^{+}>0_{E}$. From $\sigma$-order continuity of $\left.T\right|_{H}$ it will follow that $\bigwedge C\ne 0_{F}$ (it is either strictly positive or does not exist), hence there $h\in F_{+}\backslash\left\{0_{F}\right\}$ such that $C\subset F_{+}+h$; as the latter set is closed, it follows that $\left\{h_{n}\right\}_{n\in\N}\subset \overline{C}\subset F_{+}+h$, which contradicts $\bigwedge \left\{h_{n}\right\}_{n\in\N}=0_{F}$.\medskip

We first consider the case when $F$ has $\sigma$-property, and so $\overline{H}^{1}=\overline{H}$. There are $\left\{f_{n}\right\}_{n\in\N}\subset F_{+}$ and $\left\{g_{mn}\right\}_{m,n\in\N}\subset H_{+}$ such that $h_{n}\le f_{n}$ and $\left|h_{n}-g_{mn}\right|\le \frac{1}{m}f_{n}$, for every $m,n\in\N$. Then, there is $f\in F_{+}$ such that $\left\{f_{n}\right\}_{n\in\N}\subset F_{f}$. Since $E$ is Archimedean, replacing $f$ with $\varepsilon f$, for some $\varepsilon>0$, if needed, we may assume that $Tf\not\ge e$. For every $n\in\N$ and $m\ge \|f_{n}\|_{f}$ we have $\left|h_{n}-g_{mn}\right|\le \frac{1}{m}f_{n}\le \frac{\|f_{n}\|_{f}}{m}f\le f$, and since $T\ge 0$, this implies $$Tg_{mn}\ge Th_{n}-\left|Th_{n}-Tg_{mn}\right|= Th_{n}-T\left|h_{n}-g_{mn}\right|\ge e-Tf\notin -E_{+}.$$ Hence $C=\left\{g_{mn}\right\}_{m\ge \|f_{n}\|_{f}}$ fulfills the role described in the first paragraph.\medskip

Now assume that $E$ has $\sigma$-property, and let us prove the claim for $\overline{H}^{1}$. Let  $e$, $\left\{f_{n}\right\}_{n\in\N}$ and $\left\{g_{mn}\right\}_{m,n\in\N}$ be as above. There is $g\in E$ such that $\left\{Tf_{n}\right\}_{n\in\N}\subset E_{g}$, and $g\not\ge e$. For every $n\in\N$ and $m\ge \|Tf_{n}\|_{g}$ we have $Tg_{mn}\ge Th_{n}-T\left|h_{n}-g_{mn}\right|\ge e-\frac{1}{m}Tf_{n}\ge e-g\notin -E_{+}$. Therefore, $C=\left\{g_{mn}\right\}_{m\ge \|Tf_{n}\|_{g}}$ fulfills the role described in the first paragraph. The proof for $\overline{H}$ is based on the same idea, but requires more technical work.\medskip

\textbf{Claim.} If $A,D\subset F_{+}$ are such that $A$ is countable and $A\subset\overline{D}$, then there is $u\in E_{+}$ and a countable $C\subset D$ such that $TA\cup TC\subset E_{u}$ and $A\subset\overline{C}$, within $\left\{f\in F,~ T\left|f\right|\in E_{u}\right\}$ (which is an ideal in $F$).\medskip

We will prove by induction that if in the claim $A\subset\overline{D}^{l}$, for a countable ordinal $l$, then $C$ can be chosen so that $A\subset\overline{C}^{l}$. Note that this will prove the claim, since according to \cite[Theorem 3.3]{lm} there is a countable ordinal $l$ such that $A\in \overline{D}^{l}$. If $l=1$, for every element of $a\in A$ there are $v_{a}\in F_{+}$ and a sequence $D_{a}\subset F_{v_{a}}\cap D$ that converges to $a$ in $\|\cdot\|_{v_{a}}$; since $A$ is countable, due to $\sigma$-property of $E$, there is $u\in E_{+}$ such that $E_{u}$ contains $TA$ as well as $Tv_{a}$, for every $a$; it is easy to see that $u$ and $C=\bigcup\limits_{a\in A}D_{a}$ satisfy the requirements of the claim.\medskip

If the claim is proven for $l-1$, there are $u',u''\in E_{+}$ and countable $C'\subset \overline{D}^{l-1}$ and $C\subset D$ such that $TA\cup TC'\subset E_{u'}$, $TC'\cup TC\subset E_{u''}$ and $A\subset\overline{C'}^{1}$ and $C'\subset\overline{C}^{l-1}$ within $\left\{f\in F,~ T\left|f\right|\in E_{u'}\right\}$ and $\left\{f\in F,~ T\left|f\right|\in E_{u''}\right\}$, respectively. Since the uniform adherence within a larger sublattice is larger than within a smaller one, we conclude that $C$ and $u=u'+u''$ satisfy the requirements of the claim.

If $l$ is a limit ordinal, then $\overline{D}^{l}=\bigcup\limits_{k<l}\overline{D}^{k}$, and so for every $a\in A$ there is $k_{a}<l$ such that $a\in \overline{D}^{k_{a}}$; hence, there are $u_{a}\in E_{+}$ and a countable $C_{a}\subset D$ such that $\left\{Ta\right\}\cup TC_{a}\subset E_{u_{a}}$ and $a\in\overline{C_{a}}^{k_{a}}$ within $\left\{f\in F,~ T\left|f\right|\in E_{u_{a}}\right\}$. Any $u$ such that $\left\{u_{a}\right\}_{a\in A}\subset E_{u}$ and $C=\bigcup\limits_{a\in A}C_{a}$ satisfy the requirements of the claim.\qed\medskip

Now assume that $\left\{h_{n}\right\}\subset \overline{H}$ is such that  $h_{n}\downarrow 0_{F}$, but there is $e\in E$ such that $Th_{n}\ge e>0_{E}$, for every $n\in\N$. According to the claim, by restricting $T$ we may assume that $E=E_{u}$, for some $u\in E_{+}$. Then, the uniform convergence on $E$ is equivalent to the convergence with respect to $\|\cdot\|_{u}$. As $T$ is positive, it is still uniform-to-uniform continuous, and so continuous with respect to the topologies generated by the uniform convergences (which in the case of $E$ is just the $\|\cdot\|_{u}$-topology).\medskip

By scaling $u$ we may assume that $u\not\ge e$, and so $\left(e-u\right)^{+}>0_{E}$.  Let $B$ be the open unit ball in $E$ with respect to $\|\cdot\|_{u}$. There is a countable $C\subset H_{+}$ such that $\left\{h_{n}\right\}\subset \overline{C}$. For every $n$ we have that $h_{n}+T^{-1}B$ is a neighborhood of $h_{n}$, and so $h_{n}\in \overline{C\cap \left(h_{n}+T^{-1}B\right)}$. Using the claim again, find a countable $D\subset C\cap \left(\left\{h_{n}\right\}_{n\in\N}+T^{-1}B\right)$ such that $\left\{h_{n}\right\}_{n\in\N}\subset\overline{D}$. For every $f\in D$ there is $n\in\N$ such that $f\in h_{n}+T^{-1}B$, hence $Tf-Th_{n}\in \left[-u,u\right]$, from where $Tf\ge Th_{n}-u\ge e-u\notin -E_{+}$. Therefore, $D$ fulfills the role described in the first paragraph.\end{proof}

\begin{remark}
In the light of the similarity of the proofs of the theorem and Theorem \ref{rod}, it is natural to expect these two results to have a common generalization, which is yet to be discovered.
\qed\end{remark}

Recall that $F$ has \emph{countable supremum property} if for every $G\subset F$ and $g\in F$ such that $g=\bigvee G$ there is a sequence $\left(g_{n}\right)_{n\in\N}\subset G$ such that $\bigvee\limits_{n\in\N}g_{n}=g$. One can show that in this case every $\sigma$-order continuous operator from $F$ is order continuous, which yields the following corollary.

\begin{corollary}
Let $T:F\to E$ be a homomorphism between Archimedean vector lattices. If $F$ has $\sigma$-property and countable supremum property and $H$ is a sublattice of $F$ such that $\left.T\right|_{H}$ is order continuous, then so is $\left.T\right|_{\overline{H}}$.
\end{corollary}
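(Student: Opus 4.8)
The plan is to derive the statement from Theorem \ref{main1} together with the principle, recalled just above, that on a lattice with the countable supremum property every positive $\sigma$-order continuous operator is order continuous. First I would note that order continuity trivially implies $\sigma$-order continuity, a sequence being a particular net; hence $\left.T\right|_{H}$ is $\sigma$-order continuous. Since $F$ has the $\sigma$-property, the adherence of every set is closed, so $\overline{H}^{1}=\overline{H}$ (Example \ref{ru}), and Theorem \ref{main1} applies, giving that $\left.T\right|_{\overline{H}}$ is $\sigma$-order continuous. It therefore remains to upgrade $\sigma$-order continuity of $\left.T\right|_{\overline{H}}\colon\overline{H}\to E$ to order continuity, i.e.\ to invoke the recalled principle with $\overline{H}$ playing the role of the domain; the point on which everything hinges is that $\overline{H}$ must itself possess the countable supremum property.

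The upgrade itself is short, and I would record it explicitly since it pinpoints exactly where the countable supremum property of the domain is used. Let $\left(h_{\alpha}\right)_{\alpha\in A}\subset\overline{H}_{+}$ with $h_{\alpha}\downarrow 0_{F}$ in $\overline{H}$. Assuming the countable supremum property of $\overline{H}$, one extracts a countable subfamily whose infimum in $\overline{H}$ is still $0_{F}$, and then, using directedness of $A$, refines it to a decreasing sequence $\left(h_{\beta_{n}}\right)_{n\in\N}$ with $h_{\beta_{n}}\downarrow 0_{F}$ in $\overline{H}$. The $\sigma$-order continuity of $\left.T\right|_{\overline{H}}$ gives $Th_{\beta_{n}}\downarrow 0_{E}$. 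Since $T$ is positive the net $\left(Th_{\alpha}\right)$ is decreasing, and any lower bound of it is a lower bound of $\left(Th_{\beta_{n}}\right)$, hence $\le\bigwedge_{n}Th_{\beta_{n}}=0_{E}$; thus $Th_{\alpha}\downarrow 0_{E}$, which by Corollary \ref{idcont} is precisely order continuity of $\left.T\right|_{\overline{H}}$.

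The main obstacle, as anticipated, is establishing that the sublattice $\overline{H}$ inherits the countable supremum property from $F$. This is genuinely delicate because no regularity of $\overline{H}$ is assumed, so infima (and suprema) computed in $\overline{H}$ need not agree with those computed in $F$, and the countable supremum property is not in general inherited by sublattices. The reconciliation I would aim for is as follows: the decreasing net above is dominated by some $\gamma_{0}\in\overline{H}$, hence lives in the principal ideal $F_{\gamma_{0}}$, which, being an ideal, is regular and therefore \emph{does} inherit the countable supremum property from $F$. If one can produce a countable subfamily of $\left(h_{\alpha}\right)$ sharing the \emph{ambient} infimum $w$ of the whole family — for which one passes, if necessary, to the order completion $F^{\delta}$ of Section \ref{cuos}, where such infima exist — then the reconciliation is immediate: any $u\in\overline{H}_{+}$ lying below every member of the subfamily satisfies $u\le w\le h_{\alpha}$ for all $\alpha$, whence $u\le\bigwedge_{\overline{H}}\left\{h_{\alpha}\right\}=0_{F}$, so the subfamily already has $\overline{H}$-infimum $0_{F}$. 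Thus the crux reduces to transporting the countable supremum property from $F$ to the ambient lattice in which the relevant infimum is realized; I expect this transport, together with the precise way the $\sigma$-property must be exploited to secure it for a uniform closure, to be the technically demanding part of the argument.
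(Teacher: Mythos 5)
Your proposal is essentially correct and follows the same route as the paper, whose own justification for this corollary is just the one-line remark (combined with Theorem \ref{main1}) that the countable supremum property upgrades $\sigma$-order continuity to order continuity; your second paragraph supplies exactly that upgrade argument, correctly. Where you actually go beyond the paper is in noticing the subtlety it glosses over: the upgrade must be applied to $\left.T\right|_{\overline{H}}$ as an operator on the vector lattice $\overline{H}$, so it is the countable supremum property of $\overline{H}$, not of $F$, that is needed. Your reduction of this to producing a countable subfamily with the same infimum in $F^{\delta}$ is the right one, and the transport you leave as an ``expectation'' is a true and standard fact: if the Archimedean lattice $F$ has the countable supremum property, then so does $F^{\delta}$. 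A short proof: let $G\subset F^{\delta}$ be directed upward with $\bigvee_{F^{\delta}}G=g$; put $D=\left\{f\in F:~ f\le h\mbox{ for some }h\in G\right\}$ and $U=\left\{f\in F:~ f\ge g\right\}$; since $F$ is order dense and majorizing in $F^{\delta}$, we get $\bigvee_{F^{\delta}}D=g=\bigwedge_{F^{\delta}}U$, so the downward directed set $U-D\subset F_{+}$ satisfies $\bigwedge_{F}\left(U-D\right)=0_{F}$; a countable subfamily $\left\{u_{n}-d_{n}\right\}$ with infimum $0_{F}$ then forces $\bigvee_{F^{\delta}}\left\{d_{n}\right\}=g$, and choosing $h_{n}\in G$ with $d_{n}\le h_{n}$ gives $\bigvee_{F^{\delta}}\left\{h_{n}\right\}=g$. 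With this, every sublattice of $F$ inherits the countable supremum property by exactly the reconciliation you describe. Two small corrections: the $\sigma$-property plays no role in this transport (it is used only to invoke Theorem \ref{main1} and to identify $\overline{H}^{1}$ with $\overline{H}$), and the detour through the principal ideal $F_{\gamma_{0}}$ does not contribute --- regularity of $F_{\gamma_{0}}$ does give it the countable supremum property, but the infimum you must countably approximate need not exist in $F_{\gamma_{0}}$, so one has to pass to $F^{\delta}$ anyway.
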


\section{Few remarks on $\sigma$-order convergence}\label{ssig}

We will conclude this article with discussing a version of Corollary \ref{uoc} for $\sigma$-order continuity. Recall that a sublattice $E$ of $F$ is $\sigma$\emph{-regular}, if for every countable $G\subset E$ such that $\bigwedge_{E}G=0_{F}$ we have $\bigwedge_{F}G=0_{F}$. A sequence $\left(f_{n}\right)_{n\in\N}\subset F$ is said to $\sigma$\emph{-order converge} to $f\in F$ if there is a decreasing $\left(g_{n}\right)_{n\in\N}\subset F_{+}$ such that $g_{n}\downarrow 0_{F}$ and $\left|f-f_{n}\right|\le g_{n}$, for every $n\in\N$. Note that $\sigma$-order convergence fits into the framework of the \emph{sequential convergence structures} (see \cite[Section 1.7]{bb}). The following is proven similarly to Theorem \ref{ordex} and a part of Theorem \ref{rod}.

\begin{proposition}
\item[(i)] $\sigma$-order convergence is the strongest locally solid additive sequential convergence structure in which every sequence which decreases to $0_{F}$ converges to $0_{F}$.
\item[(ii)] If $E$ is a $\sigma$-regular sublattice of $F$, then it is order dense in its $\sigma$-order adherence.
\end{proposition}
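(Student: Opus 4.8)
For part (i) I would follow the blueprint of Theorem \ref{ordex}, observing that the sequential setting actually shortens the argument. By the sequential analogue of Theorem \ref{locs}, $\sigma$-order convergence is a locally solid additive (in fact linear) sequential convergence, and clearly every sequence decreasing to $0_F$ is $\sigma$-order null. Now let $\theta$ be any locally solid additive sequential convergence in which every sequence decreasing to $0_F$ $\theta$-converges to $0_F$; I want $\sigma$-order convergence to be stronger than $\theta$. By the sequential form of the remark following Proposition \ref{contop}, it suffices to show that $0_F\le f_n\xrightarrow[]{\sigma o}0_F$ implies $f_n\xrightarrow[]{\theta}0_F$. Such an $(f_n)$ admits, by definition, a decreasing sequence $g_n\downarrow 0_F$ with $0_F\le f_n\le g_n$; by hypothesis $g_n\xrightarrow[]{\theta}0_F$, and local solidness of $\theta$ then yields $f_n\xrightarrow[]{\theta}0_F$. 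In contrast to Theorem \ref{ordex}, no auxiliary directed index set is needed, since the dominating object of $\sigma$-order convergence is already a single decreasing sequence.

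For part (ii) I would adapt the order-density portion of the proof of Theorem \ref{rod}. Let $f$ belong to the $\sigma$-order adherence $\overline{E}^{1}_{\sigma o}$ with $f>0_F$; I must produce $e\in E$ with $0_F<e\le f$. Pick a sequence $(e_n)\subset E$ with $e_n\xrightarrow[]{\sigma o}f$; replacing $e_n$ by $e_n^{+}\in E_{+}$, which still $\sigma$-order converges to $f^{+}=f$ with the same dominator, I may assume $(e_n)\subset E_{+}$. Let $(g_n)\subset F_{+}$ be decreasing with $g_n\downarrow 0_F$ and $|f-e_n|\le g_n$. Since $\bigwedge_n g_n=0_F<f$, there is $n_0$ with $f\not\le g_{n_0}$, i.e. $(f-g_{n_0})^{+}>0_F$; write $g=g_{n_0}$. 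For $n\ge n_0$ monotonicity gives $|f-e_n|\le g$, so $e_n\ge f-g$ together with $e_n\ge 0_F$ forces $e_n\ge (f-g)^{+}>0_F$. Thus the countable tail $\{e_n\}_{n\ge n_0}\subset E$ is bounded below in $F$ by a strictly positive element, whence $\bigwedge_F\{e_n\}_{n\ge n_0}\ne 0_F$.

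The decisive step, which I expect to be the only genuine obstacle, is to upgrade this to an actual witness in $E$ by invoking $\sigma$-regularity rather than full regularity. I would argue by contradiction: if there were no $e\in E$ with $0_F<e\le e_n$ for all $n\ge n_0$, then any lower bound $e'\in E$ of the tail would satisfy $e'^{+}=e'\vee 0_F\le e_n\vee 0_F=e_n$ with $e'^{+}\in E_{+}$, forcing $e'^{+}=0_F$ and hence $e'\le 0_F$; this means $\bigwedge_E\{e_n\}_{n\ge n_0}=0_F$. As $\{e_n\}_{n\ge n_0}$ is countable, $\sigma$-regularity would then give $\bigwedge_F\{e_n\}_{n\ge n_0}=0_F$, contradicting the previous paragraph. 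Hence there is $e\in E$ with $0_F<e\le e_n$ for all $n\ge n_0$. Finally, since $\sigma$-order convergence is locally solid additive, $e+F_{+}$ is $\sigma$-order closed, so from $e_n\ge e$ ($n\ge n_0$) and $e_n\xrightarrow[]{\sigma o}f$ I conclude $f\ge e$. Thus $0_F<e\le f$ with $e\in E$, and as $f$ was arbitrary, $E$ is order dense in $\overline{E}^{1}_{\sigma o}$. One should also record that $\overline{E}^{1}_{\sigma o}$ is itself a sublattice, so that ``order dense in'' is meaningful; this follows exactly as for any locally solid convergence from the inequality $|a\vee b-c\vee d|\le |a-c|+|b-d|$ and the continuity of the lattice operations.
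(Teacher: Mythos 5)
Your proof is correct and is exactly what the paper intends: the paper gives no details beyond the remark that the proposition ``is proven similarly to Theorem~\ref{ordex} and a part of Theorem~\ref{rod}'', and your two parts carry out precisely those adaptations, including the key simplification that term-by-term domination by a single decreasing sequence makes the auxiliary directed set $\Gamma$ of Theorem~\ref{ordex} unnecessary, and the substitution of $\sigma$-regularity (applied to the countable tail $\left\{e_{n}\right\}_{n\ge n_{0}}$) for regularity in the density argument of Theorem~\ref{rod}. Your explicit contradiction argument showing that the absence of a strictly positive lower bound in $E$ forces $\bigwedge_{E}\left\{e_{n}\right\}_{n\ge n_{0}}=0_{F}$ is the same step the paper's Theorem~\ref{rod} proof uses implicitly when it passes from $\bigwedge_{F}\left\{e_{\alpha}\right\}_{\alpha\ge\alpha_{0}}\ne 0_{F}$ to a witness $e\in E$.
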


Recall that a sublattice $E$ of $F$ is \emph{super order dense} if for every $f\in F_{+}$ there is a countable $G\subset E$ such that $f=\bigvee G$. It is clear that this condition is stronger than order density and a super order dense sublattice is dense with respect to $\sigma$-order convergence. It is natural to ask whether Theorem \ref{rod} admits a ``full $\sigma$-version''.

\begin{question}
Is it true that a $\sigma$-regular sublattice is super order dense in its $\sigma$-order adherence? Is this adherence $\sigma$-order closed and $\sigma$-regular?
\end{question}

It is easy to see that this is true for ideals. Let us present a somewhat related result. Note that a sublattice $E\subset F$ is majorizing if and only if for every $f\in F_{+}$ there is $e\in E$ such that $f\le e$. In the following proposition the adherence and closure are taken with respect to the uniform convergence.

\begin{proposition}\label{msod}
If $E$ is a majorizing sublattice of $F$, then $\overline{E}^{1}_{+}$ consists of the limits of increasing sequences. Moreover, $E$ is super order dense in $\overline{E}$.
\end{proposition}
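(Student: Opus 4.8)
The plan is to prove the two assertions in order, the first serving as the mechanism that drives the second.

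First I would establish the structure of $\overline{E}^{1}_{+}$ by a direct construction. Fix $f\in\overline{E}^{1}_{+}$ and a net $(e_{\alpha})_{\alpha\in A}\subset E$ with $e_{\alpha}\to f$ uniformly, say relative to some $u\in F_{+}$, and for each $n\in\N$ select $\alpha_{n}$ with $|f-e_{\alpha_{n}}|\le 2^{-n}u$. This is exactly where the majorizing hypothesis enters: it produces $w\in E$ with $w\ge u$, so that $g_{n}:=e_{\alpha_{n}}-2^{-n}w\in E$ satisfies $f-2^{1-n}w\le g_{n}\le f$. Passing to the running maxima $\tilde{g}_{n}:=g_{1}\vee\dots\vee g_{n}\in E$ yields an increasing sequence caught between $f-2^{1-n}w$ and $f$; hence $\tilde{g}_{n}\to f$ uniformly relative to $w$, and since $F$ is Archimedean, $f=\bigvee_{F}\tilde{g}_{n}$. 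Thus every element of $\overline{E}^{1}_{+}$ is the supremum of an increasing sequence from $E$.

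For super order density I would argue by transfinite induction on the adherence level. Write $\overline{E}^{l}$ for the iterated uniform adherence, with $\overline{E}^{l}=\bigcup_{k<l}\overline{E}^{k}$ at limit ordinals; by \cite[Theorem 3.3]{lm} every $f\in\overline{E}$ lies in some $\overline{E}^{l}$ with $l$ a countable ordinal, so it suffices to prove that each $f\in\overline{E}^{l}_{+}$ is a supremum, taken in $\overline{E}$, of a countable subset of $E$. The case $l\le 1$ is the first assertion. For the successor step I would note that $\overline{E}^{l}$ is again a majorizing sublattice of $F$ (it contains $E$, and adherences of sublattices are sublattices), so the first assertion applied with $\overline{E}^{l}$ in place of $E$ writes any $f\in\overline{E}^{l+1}_{+}=\overline{\overline{E}^{l}}^{1}_{+}$ as $f=\bigvee_{F}\phi_{m}$ for an increasing sequence $(\phi_{m})_{m}\subset\overline{E}^{l}$; as $f\in\overline{E}$, this is also the supremum in $\overline{E}$. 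It then remains to compose suprema: by the inductive hypothesis each $\phi_{m}=\bigvee_{\overline{E}}G_{m}$ for a countable $G_{m}\subset E$, and a least--upper--bound check shows $f=\bigvee_{\overline{E}}\bigcup_{m}G_{m}$, since an $h\in\overline{E}$ dominates $\bigcup_{m}G_{m}$ iff it dominates every $\phi_{m}$ iff it dominates $f$. As a countable union of countable sets is countable, $f$ is a countable supremum of elements of $E$, which closes the induction.

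The main obstacle is structural rather than computational: the uniform adherence $\overline{E}^{1}$ is in general not closed, so one cannot simply iterate the first assertion a single time. The resolution lies in the fact that super order density concerns \emph{suprema}, not uniform limits: at a successor stage the approximating sequence $(\phi_{m})$ lives in $\overline{E}^{l}$ rather than in $E$, and it is the stability of suprema under countable unions — not a single uniform approximation, which would force the collapse $\overline{E}^{1}=\overline{E}$ that is available only under the $\sigma$-property — that carries the argument through the transfinite hierarchy.
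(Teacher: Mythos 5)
Your proof is correct and follows essentially the same route as the paper's: the same mechanism for the first assertion (extract a sequence, replace the regulator by a majorant $w\in E$, shift down by the error, take running maxima, and use the Archimedean property to turn the uniform limit into a supremum), and the same transfinite induction on the uniform adherence degree for the second, using that $\overline{E}^{l}$ is again a majorizing sublattice — where the paper invokes transitivity of super order density as a black box, you compose the countable suprema explicitly inside $\overline{E}$, which is a fine substitute. The only wrinkle is that your $\phi_{m}$ need not be positive (your first-part sequence satisfies $f-2^{1-m}w\le\phi_{m}\le f$ but not $\phi_{m}\ge 0_{F}$), so the inductive hypothesis, which concerns $\overline{E}^{l}_{+}$, strictly applies only after replacing $\phi_{m}$ by $\phi_{m}\vee 0_{F}$ (still increasing to $f$); the paper sidesteps this by truncating into $E\cap\left[0_{F},f\right]$ via $\left(e_{n}-\frac{1}{n}e\right)^{+}$.
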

\begin{proof}
Let $f\in \overline{E}^{1}_{+}$, so that there is $g\in F_{+}$ and $\left(e_{n}\right)_{n\in\N}\subset E_{+}$ such that $\left|f-e_{n}\right|\le\frac{1}{n}g$, for every $n\in\N$. Since $E$ is majorizing, there is $e\in E$ such that $e\ge g$, and so $\left|f-e_{n}\right|\le\frac{1}{n}e$, for every $n\in\N$; we have $e_{n}\in \left[f-\frac{1}{n}e, f+\frac{1}{n}e\right]$. For $n\in\N$ define $f_{n}=\left(e_{n}-\frac{1}{n}e\right)^{+}\in E\cap \left[0,f\right]$; we have $\left|f-f_{n}\right|\le\frac{2}{n}e$. Finally, taking $h_{n}=\bigvee\limits_{k=1}^{n}f_{n}$, for $n\in\N$, produces an increasing sequence $\left(h_{n}\right)_{n\in\N}\subset E\cap \left[0_{F},f\right]$ that uniformly converges to $f$. In particular, $f=\bigvee\limits_{n\in\N}h_{n}$.\medskip

Let us prove by induction that $E$ is super order dense in $\overline{E}^{n}$, for every ordinal $n$. If the claim is proven for $n-1$, then $E$ is super order dense in $\overline{E}^{n-1}$, which in turn is majorizing in $F$, and so super order dense in $\overline{E}^{n}$. It is easy to see that super order density is transitive, and so the claim is proven for $n$. If $n$ is a limit ordinal, and $f\in \overline{E}^{n}_{+}$, then $f\in \overline{E}^{m}_{+}$, for some $m<n$, and since $E$ is super order dense in $\overline{E}^{m}$, there is a countable $G\subset E$ such that $f=\bigvee G$. As $f$ was arbitrary, we conclude that $E$ is super order dense in $\overline{E}^{n}$. Since the closure is an adherence of a sufficiently large degree the second claim is proven.
\end{proof}

An operator is $\sigma$-order continuous if it preserves $\sigma$-order limits. It is easy to see that for positive operators this definition is consistent with the one given at the beginning of Section \ref{sigs}. Note that the proof in Theorem \ref{uoc} works if $\sigma$-order continuity is understood as the continuity with respect to the restriction of the $\sigma$-order convergence on $F$ to $H$ and $\overline{H}$.\medskip

The following is proven similarly to Proposition \ref{oc} (including re-proving a part of \cite[Proposition 3.1]{erz}).

\begin{proposition}For a positive continuous operator $T:F\to E$ between Archimedean vector lattices endowed with locally solid additive convergences TFAE:
\item[(i)] $T$ is $\sigma$-order continuous;
\item[(ii)] There is a topologically majorizing $G\subset F_{+}$ such that $\left.T\right|_{F_{g}}$ is $\sigma$-order continuous, for every $g\in G$;
\item[(iii)] There is a topologically dense ideal $H$ such that $\left.T\right|_{H}$ is $\sigma$-order continuous.
\item[(iv)] There is a super order dense and topologically majorizing sublattice $H$ such that $\left.T\right|_{H}$ is $\sigma$-order continuous.
\end{proposition}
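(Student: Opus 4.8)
The plan is to mirror the proof of Proposition \ref{oc}, running the cycle $(i)\Rightarrow(iv)\Rightarrow(iii)\Rightarrow(ii)\Rightarrow(i)$ and replacing each appeal to order continuity by its sequential analog. The implication $(i)\Rightarrow(iv)$ is immediate upon taking $H=F$, which is super order dense (each $f\in F_{+}$ is the supremum of the singleton $\left\{f\right\}$) and topologically majorizing. For $(iii)\Rightarrow(ii)$ I would take $G=H_{+}$: as $H$ is a topologically dense ideal, $\overline{I\left(H_{+}\right)}=\overline{H}=F$, so $G$ is topologically majorizing, and for each $h\in H_{+}$ the principal ideal $F_{h}$ is contained in $H$ as a regular sublattice, whence $\left.T\right|_{F_{h}}$ is $\sigma$-order continuous as a restriction of $\left.T\right|_{H}$.

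The step $(ii)\Rightarrow(i)$ requires the $\sigma$-analog of Proposition \ref{loc0}, and this is where ``re-proving a part of \cite[Proposition 3.1]{erz}'' enters. I would show that $H_{\sigma}=\left\{h\in F:~\left.T\right|_{F_{\left|h\right|}}\mbox{ is }\sigma\mbox{-order continuous}\right\}$ is a closed ideal on which $T$ is $\sigma$-order continuous. Solidity is clear, and closure under addition of positive elements follows from the Riesz-type splitting $f_{n}=f_{n}\wedge h_{1}+\left(f_{n}-h_{1}\right)^{+}$ for a sequence $f_{n}\downarrow 0_{F}$ in $F_{h_{1}+h_{2}}$: the first summand decreases to $0_{F}$ inside $F_{h_{1}}$, the second inside $F_{h_{2}}$, and the sum of two sequences decreasing to $0_{E}$ decreases to $0_{E}$. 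Closedness is proven exactly as in Proposition \ref{loc0}, with a sequence $f_{n}\le h$ in place of the net $f_{\gamma}$: for $h_{\alpha}\to h$ with $0_{F}\le h_{\alpha}\le h$ one estimates $e\le Tf_{n}\le T\left(f_{n}\wedge h_{\alpha}\right)+T\left(h-h_{\alpha}\right)$, lets $n\to\8$ using $\sigma$-order continuity on $F_{h_{\alpha}}$, and then uses continuity of $T$ together with the closedness of $e+E_{+}$ to force $e=0_{E}$. Given this, $(ii)$ says $G\subset H_{\sigma}$, so $F=\overline{I\left(G\right)}\subset H_{\sigma}$, and $H_{\sigma}=F$ yields $(i)$.

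The heart of the argument, and the step I expect to be the main obstacle, is $(iv)\Rightarrow(iii)$, which rests on a purely order-theoretic $\sigma$-version of \cite[Corollary 3.2]{erz}: if $H$ is super order dense and majorizing in an Archimedean lattice $G$ and $\left.T\right|_{H}$ is $\sigma$-order continuous, then $T$ is $\sigma$-order continuous on $G$ (applied with $G=I\left(H\right)$, in which $H$ is super order dense and majorizing). To prove this I would take $f_{n}\downarrow 0_{F}$ in $G$ and a supposed $0_{E}\le e\le Tf_{n}$, fix $w\in H$ with $w\ge f_{1}$, and use super order density to approximate each $w-f_{n}\in G_{+}$ from below by a countable increasing family in $H$. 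The union of these families is a countable subset of $H$ with supremum $\bigvee_{n}\left(w-f_{n}\right)=w$; taking partial maxima yields an increasing sequence $b_{\ell}\uparrow w$ in $H$ with $b_{\ell}\le w-f_{M_{\ell}}$ for a nondecreasing $M_{\ell}\to\8$. Reindexing produces a \emph{decreasing} sequence $a_{n}:=w-b_{\ell\left(n\right)}\in H$ with $f_{n}\le a_{n}$ and $a_{n}\downarrow 0_{F}$ in $H$; positivity gives $Ta_{n}\ge Tf_{n}\ge e$, while $\sigma$-order continuity on $H$ forces $Ta_{n}\downarrow 0_{E}$, whence $e=0_{E}$ and $Tf_{n}\downarrow 0_{E}$. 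The delicate point is precisely this diagonalization: order density alone would furnish only a directed approximating net, and it is super order density that makes the approximation countable and hence compatible with the sequential nature of $\sigma$-order continuity.
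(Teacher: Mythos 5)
Your route is exactly the one the paper intends: the paper's entire proof of this proposition is the remark that it is proven similarly to Proposition \ref{oc}, including re-proving a part of \cite[Proposition 3.1]{erz}, and your cycle (i)$\Rightarrow$(iv)$\Rightarrow$(iii)$\Rightarrow$(ii)$\Rightarrow$(i), built on $\sigma$-analogs of Proposition \ref{loc0} and of the propagation result \cite[Corollary 3.2]{erz}, is a faithful implementation of that plan. Your diagonalization in (iv)$\Rightarrow$(iii) also correctly isolates why \emph{super} order density (rather than order density) is the right hypothesis in (iv): it is what makes the approximating family countable and hence usable against sequences.

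There is, however, one step that fails as written: in the additivity part of your $\sigma$-analog of Proposition \ref{loc0} you split $f_{n}=f_{n}\wedge h_{1}+\left(f_{n}-h_{1}\right)^{+}$ for $f_{n}\downarrow 0_{F}$ in $F_{h_{1}+h_{2}}$ and assert that the second summand lies (and decreases to $0_{F}$) in $F_{h_{2}}$. It need not lie in $F_{h_{2}}$ at all, for any $n$. Take $F=\Co\left(\left[0,1\right]\right)$, $h_{1}\left(t\right)=t$, $h_{2}\left(t\right)=t^{2}$, and $f_{n}=tu_{n}$, where $u_{n}$ equals $2$ on $\left[0,4^{-n}\right]$, decreases linearly to $0$ on $\left[4^{-n},2\cdot 4^{-n}\right]$ and vanishes afterwards; then $f_{n}\downarrow 0_{F}$ in $F_{h_{1}+h_{2}}=F_{t}$, but $\left(f_{n}-h_{1}\right)^{+}$ coincides with $t$ on $\left[0,4^{-n}\right]$ and so is not dominated by any multiple of $t^{2}$ --- for every $n$. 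The repair is the standard scaled splitting: pick $\lambda\ge 1$ with $f_{1}\le\lambda\left(h_{1}+h_{2}\right)$ and write $f_{n}=f_{n}\wedge\lambda h_{1}+\left(f_{n}-\lambda h_{1}\right)^{+}$; then $f_{n}\wedge\lambda h_{1}\in\left[0_{F},\lambda h_{1}\right]\subset F_{h_{1}}$, while $\left(f_{n}-\lambda h_{1}\right)^{+}\le\left(\lambda h_{1}+\lambda h_{2}-\lambda h_{1}\right)^{+}=\lambda h_{2}$, both summands decrease to $0_{F}$, and the argument goes through. A smaller point in (iv)$\Rightarrow$(iii): to guarantee $M_{\ell}\to\8$ you should enumerate the disjoint union of the families $C_{n}$ (i.e.\ pairs $\left(n,c\right)$) rather than their union as a set, since one element may belong to several $C_{n}$ and a careless assignment of indices can keep $M_{\ell}$ bounded; alternatively, note that $M_{\ell}\to\8$ is not actually needed, because $a_{\ell}\ge f_{M_{\ell}}$ for \emph{some} index $M_{\ell}$ already yields $Ta_{\ell}\ge Tf_{M_{\ell}}\ge e$, which is all the contradiction requires.
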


To reconstruct the ``full $\sigma$-version'' of Corollary \ref{uoc} we need to show that a topologically dense (with respect to the uniform convergence) $\sigma$-regular sublattice is super order dense. Note that we cannot combine Proposition \ref{msod} with the comment that precedes it, since a topologically dense sublattice is not necessarily topologically dense in the ideal that it generates.

\section{Acknowledgements}

The author wants to thank Vladimir Troitsky, Mark Roelands and Marten Wortel for valuable discussions on the topic of this paper, as well as Taras Banakh who contributed an idea for Example \ref{nonr} and the service \href{mathoverflow.com/}{MathOverflow} which made it possible. Additional credit goes to the reviewer, who has corrected a serious mistake, numerous minor errors, and whose comments increased readability of the paper.

\begin{bibsection}
\begin{biblist}

\bib{acw}{article}{
   author={Abela, Kevin},
   author={Chetcuti, Emmanuel},
   author={Weber, Hans},
   title={On different modes of order convergence and some applications},
   journal={Positivity},
   volume={26},
   date={2022},
   number={1},
   pages={Paper No. 14, 22},
}

\bib{acw2}{article}{
   author={Abela, Kevin},
   author={Chetcuti, Emmanuel},
   author={Weber, Hans},
   title={Lattice uniformities inducing unbounded convergence},
   journal={Journal of Mathematical Analysis and Applications},
   volume={523},
   date={2023},
   number={1},
   pages={Paper No. 126994},
}

\bib{as}{article}{
   author={Abramovich, Yuri},
   author={Sirotkin, Gleb},
   title={On order convergence of nets},
   journal={Positivity},
   volume={9},
   date={2005},
   number={3},
   pages={287--292},
}

\bib{ab0}{book}{
   author={Aliprantis, Charalambos D.},
   author={Burkinshaw, Owen},
   title={Locally solid Riesz spaces with applications to economics},
   series={Mathematical Surveys and Monographs},
   volume={105},
   edition={2},
   publisher={American Mathematical Society, Providence, RI},
   date={2003},
   pages={xii+344},
}

\bib{ab}{book}{
   author={Aliprantis, Charalambos D.},
   author={Burkinshaw, Owen},
   title={Positive operators},
   note={Reprint of the 1985 original},
   publisher={Springer, Dordrecht},
   date={2006},
   pages={xx+376},
}

\bib{aeg}{article}{
   author={Ayd\i n, Abdullah},
   author={Emelyanov, Eduard},
   author={Gorokhova, Svetlana},
   title={Full lattice convergence on Riesz spaces},
   journal={Indag. Math. (N.S.)},
   volume={32},
   date={2021},
   number={3},
   pages={658--690},
}

\bib{bb}{book}{
   author={Beattie, R.},
   author={Butzmann, H.-P.},
   title={Convergence structures and applications to functional analysis},
   publisher={Kluwer Academic Publishers, Dordrecht},
   date={2002},
   pages={xiv+264},
}

\bib{bn}{book}{
   author={Beckenstein, Edward},
   author={Narici, Lawrence},
   title={Topological vector spaces},
   series={Pure and Applied Mathematics (Boca Raton)},
   volume={296},
   edition={2},
   publisher={CRC Press, Boca Raton, FL},
   date={2011},
   pages={xviii+610},
}

\bib{erz}{article}{
   author={Bilokopytov, Eugene},
   title={Order continuity and regularity on vector lattices and on lattices of continuous functions},
   journal={Positivity},
   date={to appear},
}

\bib{erz0}{article}{
   author={Bilokopytov, Eugene},
   title={Characterizations of the projection bands and some order properties of the space of continuous functions},
   date={to appear},
}

\bib{et}{article}{
   author={Bilokopytov, Eugene},
   author={Troitsky, Vladimir G.},
   title={Order and uo-convergence in spaces of continuous functions},
   journal={Topology Appl.},
   volume={308},
   date={2022},
   pages={Paper No. 107999, 9},
}

\bib{et2}{article}{
   author={Bilokopytov, Eugene},
   author={Troitsky, Vladimir G.},
   title={Uniformly closed sublattices of finite codimension},
   journal={\href{http://arxiv.org/abs/2210.08805}{arXiv:2210.08805}},
   date={2022},
}

\bib{dem1}{article}{
   author={Dabboorasad, Yousef},
   author={Emelyanov, Eduard},
   author={Marabeh, Mohammad},
   title={$u\tau$-convergence in locally solid vector lattices},
   journal={Positivity},
   volume={22},
   date={2018},
   number={4},
   pages={1065--1080},
}

\bib{dem}{article}{
   author={Dabboorasad, Yousef},
   author={Emelyanov, Eduard},
   author={Marabeh, Mohammad},
   title={Order convergence is not topological in infinite-dimensional
   vector lattices},
   journal={Uzbek Math. J.},
   date={2020},
   number={1},
   pages={159--166},
}

\bib{dm}{book}{
   author={Dolecki, Szymon},
   author={Mynard, Fr\'{e}d\'{e}ric},
   title={Convergence foundations of topology},
   publisher={World Scientific Publishing Co. Pte. Ltd., Hackensack, NJ},
   date={2016},
   pages={xix+548},
}

\bib{gl}{article}{
   author={Gao, Niushan},
   author={Leung, Denny H.},
   title={Smallest order closed sublattices and option spanning},
   journal={Proc. Amer. Math. Soc.},
   volume={146},
   date={2018},
   number={2},
   pages={705--716},
}

\bib{gtx}{article}{
   author={Gao, N.},
   author={Troitsky, V. G.},
   author={Xanthos, F.},
   title={Uo-convergence and its applications to Ces\`aro means in Banach
   lattices},
   journal={Israel J. Math.},
   volume={220},
   date={2017},
   number={2},
   pages={649--689},
}

\bib{gj}{book}{
   author={Gillman, Leonard},
   author={Jerison, Meyer},
   title={Rings of continuous functions},
   series={Graduate Texts in Mathematics, No. 43},
   note={Reprint of the 1960 edition},
   publisher={Springer-Verlag, New York-Heidelberg},
   date={1976},
   pages={xiii+300},
}

\bib{lc}{article}{
   author={Li, Hui},
   author={Chen, Zili},
   title={Some loose ends on unbounded order convergence},
   journal={Positivity},
   volume={22},
   date={2018},
   number={1},
   pages={83--90},
}

\bib{lm}{article}{
   author={Luxemburg, W. A. J.},
   author={Moore, L. C., Jr.},
   title={Archimedean quotient Riesz spaces},
   journal={Duke Math. J.},
   volume={34},
   date={1967},
   pages={725--739},
}

\bib{zl}{book}{
   author={Luxemburg, W. A. J.},
   author={Zaanen, A. C.},
   title={Riesz spaces. Vol. I},
   note={North-Holland Mathematical Library},
   publisher={North-Holland Publishing Co., Amsterdam-London; American
   Elsevier Publishing Co., New York},
   date={1971},
   pages={xi+514},
}

\bib{dow}{article}{
   author={O'Brien, M.},
   author={Troitsky, V. G.},
   author={van der Walt, J. H.},
   title={Net convergence structures with applications to vector lattices},
   journal={Quaest. Math.},
   volume={46},
   date={2023},
   number={2},
   pages={243--280},
}

\bib{pap1}{article}{
   author={Papangelou, Fredos},
   title={Order convergence and topological completion of commutative
   lattice-groups},
   journal={Math. Ann.},
   volume={155},
   date={1964},
   pages={81--107},
}

\bib{pap2}{article}{
   author={Papangelou, Fredos},
   title={Some considerations on convergence in abelian lattice-groups},
   journal={Pacific J. Math.},
   volume={15},
   date={1965},
   pages={1347--1364},
}

\bib{taylor}{article}{
   author={Taylor, Mitchell A.},
   title={Unbounded topologies and $uo$-convergence in locally solid vector
   lattices},
   journal={J. Math. Anal. Appl.},
   volume={472},
   date={2019},
   number={1},
   pages={981--1000},
}

\bib{tt}{article}{
   author={Taylor, M. A.},
   author={Troitsky, V. G.},
   title={Bibasic sequences in Banach lattices},
   journal={J. Funct. Anal.},
   volume={278},
   date={2020},
   number={10},
}

\bib{tag}{article}{
   author={Turan, Bahri},
   author={Alt\i n, Birol},
   author={G\"{u}rk\"{o}k, H\"{u}ma},
   title={On unbounded order continuous operators},
   journal={Turkish J. Math.},
   volume={46},
   date={2022},
   number={8},
   pages={3391--3399},
}

\bib{vw}{article}{
   author={van der Walt, Jan Harm},
   title={Applications of convergence spaces to vector lattice theory},
   journal={Topology Proc.},
   volume={41},
   date={2013},
   pages={311--331},
}

\bib{wnuk}{article}{
   author={Wnuk, Witold},
   title={Some remarks on a structure of Riesz spaces},
   journal={Atti Semin. Mat. Fis. Univ. Modena Reggio Emilia},
   volume={52},
   date={2004},
   number={2},
   pages={307--312 (2005)},
}

\end{biblist}
\end{bibsection}

\end{document}